\newtheorem{thm}{Theorem}[section]
\newtheorem{theorem}[thm]{Theorem}
\newtheorem{lemma}[thm]{Lemma}
\newtheorem{cor}[thm]{Corollary}
\theoremstyle{definition}
\newtheorem{definition}[thm]{Definition}
\newtheorem{claim}[thm]{Claim}
\newcommand{\R}{\mathbb{R}}
\newcommand{\E}{\mathbb{E}}
\newcommand{\N}{\mathbb{N}}
\newcommand{\vol}{\mathop{\mathrm{vol}}}
\newcommand{\ds}{\displaystyle}
\newcommand{\X}{\mathcal{X}}
\newcommand{\Y}{\mathcal{Y}}
\renewcommand{\P}{\mathbb{P}}
\newcommand{\var}{\mathrm{Var}}
\newcommand{\cov}{\mathrm{Cov}}
\newcommand{\s}{\widetilde{S}}
\newcommand{\p}{\mathcal{P}}
\renewcommand{\1}{\mathbb{1}}
\newcommand{\h}{\tilde{h}}
\begin{document}

\title[Limit theorems for Betti numbers]{Limit theorems for Betti
  numbers of \\random simplicial complexes} \author{Matthew Kahle}
\thanks{M.\ Kahle's research was supported by the NSF Research
  Training Group grant in geometry and topology of Stanford
  University.}  \address{School of Mathematics, Institute for Advanced
  Study, Einstein Drive, Princeton NJ 08540, U.S.A.}
\email{mkahle@math.stanford.edu} \author{Elizabeth Meckes}
\thanks{E.\ Meckes's research was supported by an American Institute of
  Mathematics Five-year Fellowship and NSF grant DMS-0852898.}
\address{Case Western Reserve University, Cleveland, OH 44106, U.S.A.}
\email{elizabeth.meckes@case.edu} \date{\today}

\maketitle

\begin{abstract}
There have been several recent articles studying homology of various
types of random simplicial complexes.  Several theorems have concerned
thresholds for vanishing of homology, and in some cases expectations
of the Betti numbers.  However little seems known so far about
limiting distributions of random Betti numbers.
   
In this article we establish Poisson and normal approximation theorems for Betti numbers of different kinds of random simplicial complex: Erd\H{o}s-R\'enyi random clique complexes, random Vietoris-Rips complexes, and random \v{C}ech complexes.  These results may be of practical interest in topological data analysis.

\end{abstract}

\section{Introduction}

Several papers have recently appeared concerning the topology of
random simplicial complexes \cite{clique, bhk, neighborhood, Linial,
Meshulam, triangulated, geometric}.  The results so far identify
thresholds for vanishing of homology, or compute the
expectation of the Betti numbers $\E[\beta_k]$ (i.e. the expected rank
of these groups).  In this article we prove Poisson and normal
approximation theorems for $\beta_k$ for three models of random
simplicial complex. The complexes themselves are defined precisely
and given further motivation in the following sections but we first outline our
results.
 
The first model considered is that of the Erd\H{o}s-R\'enyi random clique complex $X(n,p)$,
a higher dimensional analogue of the Erd\H{o}s-R\'enyi random graph
$G(n,p)$.  It was shown in \cite{clique} that for each $k$ and a
certain range of $p=p(n)$, $\beta_k \neq 0$ asymptotically almost surely (a.a.s\.), and in this regime, a formula for the
asymptotic size of $\E[\beta_k]$in terms of $p$ is given.
(Outside of this regime it is conjectured that $\beta_k =0 $ a.a.s.\, and some evidence for the conjecture is given in \cite{clique}.)
Here we prove
 a Central Limit Theorem for $\beta_k$. That is, we show that $$ \frac{ \beta_k -
    \E[\beta_k] }{ \sqrt{\var [ \beta_k]}} \Rightarrow
  \mathcal{N}(0,1),$$ as $n \to \infty$, where $\mathcal{N}(0,1)$ is
  the normal distribution with mean $0$ and variance $1$.

\begin{figure}\label{ER-fig}
\begin{centering}
\includegraphics{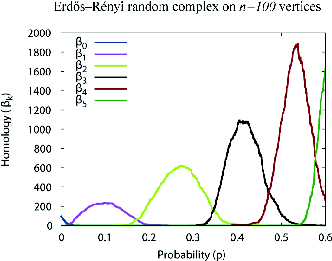}
\end{centering}
\caption{The Betti numbers of $X(n,p)$ plotted vertically against edge
  probability $p$; in this example $n=100$. \emph{Computation and
    graphic courtesy of Afra Zomorodian.}}
\label{fig:gnp}
\end{figure}

The second model considered is the random \v{C}ech complex.
This model is a higher-dimensional analog of the random geometric graph;
the underlying graph is 
a random geometric graph and the presence of 
$(k-1)$-dimensional faces is determined by
$k$-fold intersections of balls centered about the vertices. \v{C}ech complexes are homotopy
equivalent to Edelsbrunner and M\"{u}cke's {\it alpha shapes}, widely
applied in computational geometry and topology \cite{alpha}. The
analysis needed to obtain limit theorems for the Betti numbers
of random \v{C}ech complexes 
is more subtle that what is needed for 
the Erd\"os-R\'enyi model; to prove the
normal and Poisson approximation theorems we must first establish limit
theorems for certain hypergraph counts, extending some of Mathew
Penrose's results for subgraph counts for geometric random graphs
\cite{penrose}.

The final type of complex considered
 is the random Vietoris-Rips complex, denoted $VR(n,r)$. This is similar to the random \v{C}ech complex; 
the construction is to take the clique complex of a random geometric
graph.  (A useful 
reference for geometric random graphs is
\cite{penrose}.) The topology is very different than for the clique complex of the Erd\H{o}s-R\'enyi random graph;
for the contrast between $X(n,p)$ and $VR(n,r)$ see Figures
\ref{fig:gnp} and \ref{fig:geom}.  The analysis needed to obtain limit theorems
for the Betti numbers of $VR(n,r)$ is nevertheless
essentially identical to that needed
for the random \v{C}ech complex.  A minor example of this fact is that
in both cases, since $\beta_0$ counts the number of connected components 
for the \v{C}ech and Rips complexes, $\beta_0$ is actually the same in
each of these cases and
is equal to the number of components of the random geometric graph. 
This has already been treated in
detail by Penrose \cite{penrose}, and so when convenient we will restrict 
attention to $\beta_k$ for $k\ge 1$.

The techniques throughout the paper are a combination of inequalities derived from combinatorial and topological considerations with Stein's method.  (For an introduction to topological combinatorics see \cite{Bjorner}; for a survey of 
Stein's method in proving Poisson approximation theorems see \cite{CDM}, and 
for an introduction to Stein's method for normal approximation, see 
\cite{RR}.)

\subsection{Notation and conventions}

Throughout this article, we use Bachmann-Landau big-$O$, little-$O$, and related notations. In particular, for non-negative functions $g$ and $h$, we write the following.

\begin{itemize}
\item $g(n) = O(h(n))$ means that there exists $n_0$ and $k$ such that for $n > n_0$, we have that $g(n) \le k \cdot h(n)$. (i.e.\ $g$ is asymptotically bounded above by $h$, up to a constant factor.) 
\item $g(n) = \Omega(h(n))$ means that there exists $n_0$ and $k$ such that for $n > n_0$, we have that  $g(n) \ge k \cdot h(n)$. (i.e.\ $g$ is asymptotically bounded below by $h$, up to a constant factor.) 
\item $g(n) = \Theta(h(n))$ means that $g(n) = O(h(n))$ and $g(n) = \Omega(h(n))$. (i.e.\ $g$ is asymptotically bounded above and below by $h$, up to constant factors.)
\item $g(n) = o(h(n))$ means that for every $\epsilon > 0$, there exists $n_0$ such that for $n > n_0$, we have that  $g(n) \le \epsilon \cdot h(n)$. (i.e.\ $g$ is dominated by $h$ asymptotically.)
\item $g(n) = \omega(h(n))$ means that for every $k >0$, there exists $n_0$ such that for $n > n_0$, we have that  $g(n) \ge k \cdot h(n)$. (i.e.\ $g$ dominates $h$ asymptotically.)
\end{itemize}
We may also write $A_n\simeq B_n$ if $\lim_{n\to\infty}\frac{A_n}{B_n}=1$, and 
$A_n\lesssim B_n$ if there is a constant $c$  such 
that $A_n\le c B_n$ for all $n$.

A sequence $\{X_n\}_{n=1}^\infty$ of random variables is said to {\it converge
weakly} to a limiting random variable $X$ (written $X_n\Rightarrow X$)
if $\lim_{n\to\infty}\E[f(X_n)]=\E[f(X)]$ for all bounded continuous
functions $f$ (there are several other equivalent definitions).

The {\it total variation distance} between random variables $X$ and $Y$ 
is defined by 
$$d_{TV}(X,Y):=\sup_f\big|\E[f(X)]-\E[f(Y)]\big|,$$
with the supremum taken over all continuous functions bounded by one.  Clearly,
if $d_{TV}(X_n,X)\to0$ as $n\to\infty$, then $X_n\Rightarrow X$; however, the 
topology induced by the total variation distance is stronger than the 
topology of weak convergence.

The {\it $L_1$-Wasserstein distance} or {\it Kantorovich-Rubenstein distance}
between $X$ and $Y$ is defined by 
$$d_1(X,Y):=\sup_f\big|\E[f(X)]-\E[f(Y)]\big|,$$
where the supremum is over all functions $f$ with $\sup_{x\neq y}\frac{|f(x)-f(y)|}{
|x-y|}\le 1.$  This distance also induces a topology stronger than the topology
of weak convergence.

Finally, the normal distribution with mean $\mu$ and variance $\sigma^2$ is
denoted $\mathcal{N}(\mu,\sigma^2)$, and the distribution function of the 
standard normal distribution is denoted $\Phi(t)$.

\section{Erd\H{o}s-R\'enyi random clique complexes}

Perhaps the first type of random simplicial complex studied was the
$1$-dimensional version studied by Erd\H{o}s and R\'enyi \cite{Erd1}.

\begin{definition} 
The {\it Erd\H{o}s-R\'enyi random graph} $G(n,p)$ is the probability
space of all graphs on vertex set $[n] = \{1, 2, \dots, n \}$ with
each edge included independently with probability 
$p$.
\end{definition}

The ``clique complex'' is used  to generalize $G(n,p)$ from graphs to
higher dimensional simplicial complexes.

\begin{definition} 
The {\it clique complex} $X(H)$ of a graph $H$ is a the simplicial
complex with vertex set $V(H)$ and a face for 
each set of vertices spanning a
complete subgraph of $H$.
\end{definition}

In other words, the clique complex $X(H)$ of a graph $H$ is the 
maximal simplicial complex with $1$-skeleton $H$.

This section concerns the clique complex of the Erd\H{o}s-R\'enyi  random
graph, i.e.\ $X(G(n,p))$.  For simplicity in notation, \
this is denoted $X(n,p)$.

There are several motivations for using 
 $X(n,p)$ as a model of a 
random simplicial complex.  One motivation is that $X(n,p)$ provides a
natural higher-dimensional generalization of $G(n,p)$, which has
proved extremely useful in graph theory as well as in applications.  (Other
higher-dimensional generalizations are studied in \cite{bhk, Linial, Meshulam}.)  Another
motivation comes from the fact that every simplicial complex is
homeomorphic to the clique complex of some graph (e.g. by barycentric
subdivision) \cite{Hatcher}.

One interesting feature of $X(n,p)$ is that it provides homological
analogues of the Erd\H{o}s-R\'enyi theorem, but in a {\it
  non-monotone} setting: If edges are
added at random to an empty graph, the Erd\H{o}s-R\'enyi theorem
characterizes the number of edges needed
before the graph becomes connected.  Connectivity is a monotone
graph property -- if one adds edges to a connected graph, it is still
connected.

Topologically, connectivity is equivalent to a statement about zeroth
homology $H_0(G(n,p))$ but if one asks about $H_k(X(n,p))$, $k>0$,
there is a problem -- adding edges generates higher $k$-dimensional
faces and $(k+1)$-dimensional faces at the same time.  Since
generators and relations are both being added, there is no reason that
things have to behave in a monotone way.  In fact, it 
is not just that things might not be monotone; they are non-monotone in an essential way.  In
particular, there seem to be two thresholds for higher homology -- one
where $H_k$ passes from vanishing to non-vanishing, and another where
it passes back to vanishing.

The following theorem was proved in
 \cite{clique}. For any fixed
$k>0$, let $\beta_k$ denote the dimension of $k$th homology,
i.e.\ $\beta_k = \dim [ H_k (\Delta, \mathbb{Q}) ].$

\begin{theorem}\label{exp_er} 
If $p = \omega(n^{-1/k})$ and $p=o( n^{-1/(k+1)})$ then $$ \lim_{n \to
  \infty}{\E[\beta_k (X(n,p))] \over n^k p^{k+1 \choose 2} } = {1 \over
  ( k+1)!}.$$
\end{theorem}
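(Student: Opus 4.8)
The plan is to sandwich $\beta_k$ between simple linear functions of the face numbers, so that only linearity of expectation is needed. Write $f_j = f_j(X(n,p))$ for the number of $j$-dimensional faces of $X(n,p)$, equivalently the number of $(j+1)$-cliques of $G(n,p)$, and work with homology over $\Q$, so that $\beta_k = \dim\ker\partial_k - \mathrm{rank}\,\partial_{k+1}$. Since $\mathrm{rank}\,\partial_k \le f_{k-1}$ (its codomain has dimension $f_{k-1}$) and $\mathrm{rank}\,\partial_{k+1} \le f_{k+1}$ (its domain has dimension $f_{k+1}$), elementary rank--nullity for the simplicial chain complex gives the pathwise estimate
\[
f_k - f_{k-1} - f_{k+1} \ \le\ \beta_k \ \le\ f_k ,
\]
the upper bound because $\beta_k \le \dim\ker\partial_k \le f_k$. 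Taking expectations, the theorem reduces to the asymptotics of $\E[f_{k-1}]$, $\E[f_k]$, and $\E[f_{k+1}]$.

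These I would compute exactly: a fixed $(j+1)$-subset of $[n]$ spans a $j$-face precisely when its $\binom{j+1}{2}$ internal edges are all present, so $\E[f_j] = \binom{n}{j+1} p^{\binom{j+1}{2}}$, and in particular
\[
\E[f_k] = \binom{n}{k+1} p^{\binom{k+1}{2}} \simeq \frac{n^{k+1}}{(k+1)!}\, p^{\binom{k+1}{2}}.
\]
The crux is that the two correction terms are of strictly smaller order, and this is exactly what the hypotheses on $p$ buy us. Indeed
\[
\frac{\E[f_{k-1}]}{\E[f_k]} = \frac{\binom{n}{k}}{\binom{n}{k+1}}\, p^{-k} \simeq \frac{k+1}{np^{k}} \longrightarrow 0
\]
since $p = \omega(n^{-1/k})$ is the same as $np^{k} \to \infty$, while
\[
\frac{\E[f_{k+1}]}{\E[f_k]} = \frac{\binom{n}{k+2}}{\binom{n}{k+1}}\, p^{k+1} \simeq \frac{np^{k+1}}{k+2} \longrightarrow 0
\]
since $p = o(n^{-1/(k+1)})$ is the same as $np^{k+1} \to 0$. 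Dividing the expectation version of the sandwich by $\E[f_k]$ and letting $n \to \infty$ forces $\E[\beta_k]/\E[f_k] \to 1$, whence $\E[\beta_k] \simeq \binom{n}{k+1} p^{\binom{k+1}{2}}$, which is the claim.

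There is essentially no probabilistic obstacle here: linearity of expectation is the only input, and neither concentration nor the independence of the edge indicators is used. What does the work is the deterministic inequality together with the observation that the window $n^{-1/k} \ll p \ll n^{-1/(k+1)}$ is precisely the range in which $k$-faces are abundant ($np^{k} \to \infty$, forcing $f_{k-1} \ll f_k$) while $(k+1)$-faces are scarce ($np^{k+1} \to 0$, forcing $f_{k+1} \ll f_k$); thus $\beta_k$ is asymptotic to $f_k$, the relations coming from $(k+1)$-faces and the $k$-faces that are themselves boundaries each contributing only lower-order corrections. The only thing to be careful about is setting up the rank--nullity bound correctly and checking that each of the two boundary hypotheses on $p$ kills exactly one of the two error terms. (The substantive work of the paper --- showing that $\beta_k$ concentrates about this mean and satisfies a central limit theorem --- is taken up afterward; for the first moment the inequality above already suffices.)
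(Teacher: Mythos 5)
Your proof is correct and is essentially identical to the paper's: both sandwich $\beta_k$ via the Morse inequalities $f_k - f_{k-1} - f_{k+1} \le \beta_k \le f_k$, compute $\E[f_j]$ by linearity, and use the two hypotheses on $p$ to show the flanking terms are lower order. (Your computation also flags that the theorem statement contains a typo --- the normalization should be $n^{k+1}p^{\binom{k+1}{2}}$, as appears in the paper's own proof, not $n^k p^{\binom{k+1}{2}}$.)
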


(In \cite{clique} explicit nontrivial homology classes are exhibited, and several
partial converses of Theorem \ref{exp_er} are proved; in particular it is shown that if $p =
O(n^{-1/k - \epsilon})$ or $p = \Omega( n^{-1/(2k+1) + \epsilon})$ for
some constant $\epsilon > 0$, then a.a.s.\ $\beta_k = 0$.)

The remainder of this section is
devoted to showing that in the
same regime, $\beta_k$ obeys a central limit theorem.

\begin{theorem} \label{clt_er} 
If $p = \omega(n^{-1/k})$ and $p=o( n^{-1/(k+1)})$ then $$ \frac{
  \beta_k (X(n,p)) - \E[\beta_k(X(n,p))] }{ \sqrt{\var [ \beta_k]}} \Rightarrow
\mathcal{N}(0,1).$$
\end{theorem}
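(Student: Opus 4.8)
The plan is to exploit the fact that in the relevant range of $p$, the homology $H_k(X(n,p))$ is essentially controlled by the face numbers of the complex, which are sums of indicator random variables with bounded local dependence. Recall the Morse-type inequality
$$
f_k - f_{k-1} - f_{k+1} \le \beta_k \le f_k,
$$
where $f_j = f_j(X(n,p))$ denotes the number of $j$-dimensional faces; more precisely one has $\beta_k = f_k - f_{k+1} - f_{k-1} + (\text{boundary/coboundary rank terms})$, and the point is that in our regime the ``error'' is negligible compared to $\sqrt{\var\beta_k}$. In the window $p=\omega(n^{-1/k})$, $p = o(n^{-1/(k+1)})$ one computes $\E f_k \asymp n^{k+1} p^{\binom{k+1}{2}}$, while $\E f_{k+1} \asymp n^{k+2} p^{\binom{k+2}{2}} = \E f_k \cdot (np^{k+1})\cdot n = o(\E f_k)$ since $np^{k+1} = o(1)$; similarly $\E f_{k-1} = \E f_k \cdot (n p^k)^{-1} \cdot(\text{const}) = o(\E f_k)$ since $np^k \to \infty$. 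So the dominant term is $f_k$, and the first step is to show $f_k$ satisfies a CLT and then that $\beta_k - f_k$ (suitably centered) is negligible.

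The core step is a central limit theorem for $f_k$, the number of $(k+1)$-cliques in $G(n,p)$. This is classical — clique counts are a standard example in the theory of U-statistics and in Stein's method for normal approximation via dependency graphs or the exchangeable-pair / size-bias coupling. I would invoke a normal approximation theorem (e.g.\ the dependency-graph version of Stein's method as in \cite{RR}) applied to $f_k = \sum_{|S|=k+1} \1[S \text{ spans a clique}]$: two indicators are dependent only if the corresponding $(k+1)$-sets share an edge, so the dependency neighborhoods have size $O(n^{k-1})$, and one obtains a Berry–Esseen bound of the form $d_1\big((f_k - \E f_k)/\sqrt{\var f_k},\, \mathcal N(0,1)\big) = O\big(n^{-1/2}(\text{polynomial in } n,p)\big)$, which one checks tends to $0$ in our range of $p$ (using $\var f_k \asymp n^{2k+1}p^{2\binom{k+1}{2} - k}$, the dominant contribution coming from pairs of $(k+1)$-sets sharing exactly $k$ vertices, i.e.\ one edge's worth of correlation — this variance computation is the main bookkeeping chore).

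The remaining step is to transfer the CLT from $f_k$ to $\beta_k$. Write $\beta_k = f_k - Z_n$ where $Z_n \ge 0$ collects the $f_{k-1}$, $f_{k+1}$ and rank corrections; by the Morse inequalities $0 \le f_k - \beta_k \le f_{k-1} + f_{k+1}$, so $\E|f_k - \beta_k| \le \E f_{k-1} + \E f_{k+1} = o(\E f_k)$. To conclude I need this error to be $o(\sqrt{\var \beta_k})$; since $\sqrt{\var f_k} \asymp n^{k+1/2} p^{\binom{k+1}{2} - k/2}$ one checks that indeed $\E f_{k\pm1} = o(\sqrt{\var f_k})$ in the stated window (this is where both hypotheses $np^k\to\infty$ and $np^{k+1}\to 0$ are used), and also that $\var\beta_k \simeq \var f_k$. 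Then $(\beta_k - \E\beta_k)/\sqrt{\var\beta_k} = (f_k - \E f_k)/\sqrt{\var f_k} + o_{L^1}(1)$, and an application of Slutsky's theorem (together with $d_1$-convergence implying weak convergence) gives the result. The main obstacle I anticipate is not any single step but the variance estimates: showing $\var f_k$ has the claimed order (so that the Stein bound vanishes) and simultaneously that the correction terms are of strictly smaller order requires a careful accounting of which overlap patterns dominate, and this is precisely the delicate part of the argument.
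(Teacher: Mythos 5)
Your high-level plan matches the paper's: use the Morse inequalities $-f_{k-1}+f_k-f_{k+1}\le\beta_k\le f_k$, prove a CLT for clique counts by Stein's method with a dependency graph, and transfer to $\beta_k$. But there are two problems, one a genuine gap that breaks the argument.

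First, your variance formula for $f_k$ is wrong. The covariance between two $(k+1)$-clique indicators with $r$ common vertices contributes $\binom{n}{k+1}\binom{k+1}{r}\binom{n-k-1}{k+1-r}\bigl(p^{2\binom{k+1}{2}-\binom{r}{2}}-p^{2\binom{k+1}{2}}\bigr)$; the $r=0,1$ terms vanish, and in the window $np^k\to\infty$ the $r=2$ term (two cliques sharing one edge) dominates, giving $\var(f_k)\asymp n^{2k}p^{2\binom{k+1}{2}-1}$. This is not $n^{2k+1}p^{2\binom{k+1}{2}-k}$, and the dominant pairs share exactly $2$ vertices, not $k$. (You can sanity-check at $k=1$: edges are independent, so $\var(f_1)=\binom{n}{2}p(1-p)\asymp n^2 p$, matching the $r=2$ formula.)

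Second, and more importantly, your transfer from $f_k$ to $\beta_k$ fails. You want $\E|f_k-\beta_k|\le\E f_{k-1}+\E f_{k+1}=o(\sqrt{\var f_k})$ so that a Slutsky argument applies. But with the correct variance, $\E f_{k-1}\asymp n^k p^{\binom{k}{2}}$ and $\sqrt{\var f_k}\asymp n^k p^{\binom{k+1}{2}-1/2}$, so $\E f_{k-1}/\sqrt{\var f_k}\asymp p^{-(k-1/2)}\to\infty$. The lower-dimensional face count $f_{k-1}$ is in expectation {\it much} larger than the standard deviation of $f_k$ throughout the window, so $\beta_k-f_k$ is not $o_{L^1}(\sqrt{\var f_k})$, and the proposed Slutsky step is invalid. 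This is not a bookkeeping chore that can be fixed; the quantity you need to be small is genuinely large.

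The paper avoids this by never comparing $\beta_k$ to $f_k$ in $L^1$. Instead it proves a CLT for the lower bound $\tilde f_k:=-f_{k-1}+f_k-f_{k+1}$ as well as for $f_k$ (both via the Baldi--Kagan--Rinott dissociated-sum normal approximation), shows that $\var(\tilde f_k)\simeq\var(f_k)$ by computing the cross-covariances $\cov(f_k,f_{k\pm1})$ and showing they are $o(\var f_k)$, and then sandwiches the distribution function $\P[\beta_k-\E f_k\le t\sqrt{\var f_k}]$ between the CDFs of the two bounds via an $\epsilon$-shifting argument. The extra lemma you are missing is precisely the CLT for $\tilde f_k$ itself (the paper's claim (iii)), together with the asymptotic equivalence of the variances (claim (i)); the centerings $\E f_k$ and $\E\tilde f_k$ are handled inside the sandwich rather than by requiring $\E f_{k-1}$ to be small.
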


\begin{proof}
For a finite simplicial complex $\Delta$, let $f_i(\Delta)$ (or simply
$f_i$ if context is clear) denote the number of
$i$-dimensional faces of $\Delta$.  A useful fact when proving
Theorems \ref{exp_er} and \ref{clt_er} is that 
$\beta_k$ satisfies the following ``Morse''
inequalities:
\begin{equation}\label{morse}
-f_{k-1}+f_k-f_{k+1} \le \beta_k \le f_k,
\end{equation}
 for all $k$.  These
inequalities follow from the definition of simplicial homology and the
rank-nullity law \cite{Hatcher}.

The next observation to make is that $X(n,p)$ is a clique complex, 
so $f_k$ counts the number of $(k+1)$-cliques.
Since there are $\binom{n}{k+1}$ possible $(k+1)$-cliques and each appears with
probability $p^{k+1 \choose 2}$,
$$ \lim_{n \to \infty} {\E[f_k] \over n^{k+1} p^{k+1 \choose 2}} = \frac{1}{
(k+1)!}.$$
If $p = \omega(n^{-1/k})$ then

$${\E[f_{k-1}] \over \E[f_k]}  ={ n^k p^{k \choose 2} \over n^{k+1}
  p^{k+1 \choose 2}}= {1 \over n p^k}= o(1),$$

and the same argument shows that if $p =
o(n^{-1/(k+1)})$ then $${\E[f_{k +1 }] \over \E[
    f_{k} ]} = o(1).$$

That is, in the regime of Theorems \ref{exp_er} and \ref{clt_er}, 
$$\lim_{n\to\infty}\frac{\E[f_k]}{\E[-f_{k-1}+f_k-f_{k+1}]}=1,$$
which, in light of \eqref{morse}, reproves Theorem \ref{exp_er}.

Let $\tilde{f}_k:=-f_{k-1}+f_k-f_{k+1}.$ 
The following claim together with \eqref{morse} is used 
to show that $\beta_k$ satisfies a central limit
theorem.
\begin{claim}

\

\begin{enumerate}

\item\label{var_equiv} \begin{equation*}\lim_{n\to\infty}\frac{\var(f_k)}{
\var(\tilde{f}_k)}=1.\end{equation*}
\item\label{clt_above} \begin{equation*} \frac{f_k-\E[f_k]}{\sqrt{\var(f_k)}}
\Rightarrow\mathcal{N}(0,1)\quad  {\rm as\, } n\to\infty.
\end{equation*}
\item \label{clt_below}\begin{equation*} \frac{\tilde{f}_k-\E[\tilde{f}_k]}{
\sqrt{\var(\tilde{f}_k)}}\Rightarrow\mathcal{N}(0,1)\quad
  {\rm as\, } n\to\infty.\end{equation*}
\end{enumerate}

\end{claim}

For $t\in\R$, it follows from \eqref{morse} that

$$\P\left[\frac{f_k-\E[f_k]}{\sqrt{\var(f_k)}}\le
  t\right]\le\P\left[\frac{\beta_k-\E[f_k]}{\sqrt{\var(f_k)}}\le
  t\right]\le\P\left[\frac{\tilde{f}_k-\E[f_k]}{\sqrt{\var(f_k)}}\le
  t\right].$$ 
The left-hand side tends to $\Phi(t)$ as $n\to\infty$ by part \ref{clt_above}
of the claim.  For the right-hand side, let $\epsilon>0$ and observe that
\begin{equation}\begin{split}\label{fiddling}
\P\left[\frac{\tilde{f}_k-\E[f_k]}{\sqrt{\var(f_k)}}\le
  t\right]&\le\P\left[\frac{\tilde{f}_k-\E[\tilde{f}_k]}{\sqrt{\var(\tilde{f}_k)
}}\le  t-\epsilon\right]+\P\left[\left|\frac{\tilde{f}_k-\E[\tilde{f}_k]}{
\sqrt{\var(\tilde{f}_k)}}-\frac{\tilde{f}_k-\E[f_k]}{\sqrt{\var(f_k)
}}\right|>\epsilon\right]\\&
+\P\left[\frac{\tilde{f}_k-\E[f_k]}{\sqrt{\var(f_k)}}\le
  t, \left|\frac{\tilde{f}_k-\E[\tilde{f}_k]}{\sqrt{\var(\tilde{f}_k)
}}-t\right|\le \epsilon\right].
\end{split}\end{equation}
Now, it follows from part \ref{clt_below} of the claim that the first 
term of the right-hand side of \eqref{fiddling} tends to 
$\Phi(t-\epsilon)$ and that the last is asymptotically bounded above by
$\Phi(t+\epsilon)-\Phi(t-\epsilon)$.
For the second term, first require $n$ to be large enough that
$$\left|\frac{\E[f_k]}{\sqrt{\var(f_k)}}-\frac{\E[\tilde{f}_k]}{\sqrt{\var(
\tilde{f}_k)}}\right|<\frac{\epsilon}{2}.$$
This condition together with Chebychev's inequality implies that
\begin{equation*}\begin{split}
\P\left[\left|\frac{\tilde{f}_k-\E[\tilde{f}_k]}{
\sqrt{\var(\tilde{f}_k)}}-\frac{\tilde{f}_k-\E[f_k]}{\sqrt{\var(f_k)
}}\right|>\epsilon\right]&\le\P\left[\tilde{f_k}\left|\frac{1}{\sqrt{
\var(f_k)}}-\frac{1}{\sqrt{\var(\tilde{f}_k)}}\right|>\frac{\epsilon}{2}
\right]\\&\le4\epsilon^{-2}\left(\frac{\sqrt{\var(\tilde{f}_k)}}{\sqrt{
\var(f_k)}}-1\right)^2,
\end{split}\end{equation*}  
which tends to zero for fixed $\epsilon>0$ by part \ref{var_equiv}
of the claim.  It thus follows that the right-hand side of \eqref{fiddling}
is asymptotically bounded above by $\Phi(t+\epsilon)$ as $n\to\infty$; 
as $\epsilon$ is arbitrary, this completes the proof of the central limit
theorem for $\beta_k$, modulo proof of the claim.

\medskip

To prove part \ref{var_equiv} of the claim, first write 
$$f_k=\sum_{\substack{A\subseteq\{1,\ldots,n\}\\|A|=k+1}}\xi_A,$$
where $\xi_A$ is the indicator that $A$ spans a face in $X(n,p)$; that
is, that $A$ spans a complete graph in $G(n,p)$.  Then, enumerating
pairs of subsets of size $k+1$ of $\{1,\ldots,n\}$ by the size $r$ of 
their interesection,
\begin{equation*}\begin{split}
\var(f_k)&=\sum_{A,B}\E[\xi_A\xi_B]-\left[\binom{n}{k+1}p^{\binom{k+1}{2}}
\right]^2\\&=\binom{n}{k+1}\sum_{r=0}^{k+1}\binom{k+1}{r}\binom{n-k-1}{k+1-r}
p^{2\binom{k+1}{2}-\binom{r}{2}}-\left[\binom{n}{k+1}p^{\binom{k+1}{2}}
\right]^2.
\end{split}\end{equation*}
Now, it is not
hard to see that in the range of $p$ considered here, only the $r=0,1,2$
terms contribute in the limit; there is cancellation of the terms
of order $n^{k+1}$ and $n^k$, so that the main contribution is
in fact from the $r=2$ term and 
\begin{equation}\label{f_k_var}\lim_{n\to\infty}n^{-2k}p^{(-2\binom{k+1}{2}+1)}\var(f_k)=c_k,\end{equation}
for some constant $c$ depending only on $k$.  From this it follows immediately
that 
$$\frac{\var(f_{k-1})}{\var(f_{k})}=o(1)\quad{\rm and}\quad
\frac{\var(f_{k+1})}{\var(f_{k})}=o(1),$$
for $p$ in the range specified in the statement of the theorem.

Expanding the same way as above, it is clear that 
$$\cov(f_k,f_{k+1})=\binom{n}{k+1}p^{\binom{k+1}{2}+ \binom{k+2}{2}}
\left[\sum_{r=0}^{k+1}\binom{k+1}{r}\binom{n-k-1}{k+2-r}p^{-\binom{r}{2}}-
\binom{n}{k+2}\right];$$
again there is cancellation of the terms of order $n^{k+2}$
and $n^{k+1}$ so that the leading contribution is from the $r=2$ term
and 
$$\lim_{n\to\infty}n^{-2k-1}p^{-\left(\binom{k+1}{2}+ \binom{k+2}{2}-1\right)}
\cov(f_k,f_{k+1})=c_k$$
for a (different) constant $c_k$ depending only on $k$.  Thus in the 
range of $p$ being considered,
$$\frac{\cov(f_k,f_{k+1})}{\var(f_k)}=o(1).$$
In exactly the same way, one can show that
$$\frac{\cov(f_k,f_{k-1})}{\var(f_k)}=o(1)\quad{\rm and}\quad
 \frac{\cov(f_{k-1},f_{k+1})}{\var(f_k)}=o(1),$$
completing the proof of part \ref{var_equiv} of the claim.

\smallskip

The proofs of the second and third parts both
follow from an abstract normal approximation theorem for
dissociated random variables proved (via Stein's method) in \cite{BKR}.
Part \ref{clt_above} is in fact proved there; the following is a 
a straightforward modification of their proof which obtains a central
limit theorem for the lower bound $\tilde{f}_k$.  One 
can also recover the proof of part \ref{clt_above} from what is
given below, simply by ignoring the extra terms present in $\tilde{f}_k$
beyond those coming from $f_k$.

A set $\{X_{\bf j}:{\bf j}=(j_1,\ldots,j_r)\in J\}$ for $J$ a set of $r$-tuples 
is {\it dissociated} if two subcollections of the random variables
$\{X_{\bf j}:{\bf j}\in K\}$ and $\{X_{\bf j}:{\bf j}\in L\}$ are
independent whenever $\left(\cup_{{\bf j}\in K}\{j_1,\ldots,j_r\}\right)
\cap \left(\cup_{{\bf j}\in L}\{j_1,\ldots,j_r\}\right)=\emptyset.$
Let $W:=\sum_{{\bf j}\in J}X_{\bf j},$ and for each ${\bf j}\in J$, let
$L_{\bf j}:=\{{\bf k}\in J:\{k_1,\ldots,k_r\}\cap\{j_1,\ldots,j_r\}\neq
\emptyset\}.$    That is, $L_{\bf j}$ is a dependency neighborhood for 
${\bf j}$.  If $\E X_{\bf j}=0$ and $\E W^2=1$, then it is shown in 
\cite{BKR} that
\begin{equation}\label{BKR-thm}
d_1(W,Z)\le K\sum_{{\bf j}\in J}\sum_{{\bf k},
{\bf l}\in L_{\bf j}}\Big[\E|X_{\bf j}X_{\bf k}X_{\bf l}|+\E|X_{\bf j}
X_{\bf k}|\E|X_{\bf l}|\Big],
\end{equation}
where $Z$ is a standard normal random variable.

To show that $\tilde{f}_k$ satisfies a central limit theorem, let the index
set $J$ be the potential edge sets for complete graphs on $k+e$ 
($e\in\{0,1,2\}$) vertices
in $G(n,p)$; that is, an element of $J$ is a  $\binom{k+e}{2}$-tuple of 
edges spanning a given set of $k+e$ vertices.  Each ${\bf j}\in J$
can thus be  
associated with its spanning set $A_{\bf j}$ of vertices.  If the random 
variables $X_{\bf j}$ are defined by 
$$X_{\bf j}:=\sigma^{-1}(\xi_{A_{\bf j}}-\E[\xi_{A_{\bf j}}]),$$ where 
$\sigma^2=\var(f_k)$, then $\{X_{\bf j}\}$
are evidently dissociated.

The second half of the sum from \eqref{BKR-thm}
is fairly straightforward to bound in this context.  For each ${\bf j}$,
partition $L_{\bf j}$ into the sets $L_{\bf j}^e$ of indices whose spanning
sets have size $k+e$.   
Observe that for each ${\bf j}$, if $e_j=|L_{\bf j}|-k,$ then
$$|L_{\bf j}^e|=\binom{n}{k+e}-\binom{n-k-e_j}{k+e}-(k+e_j)
\binom{n-k-e_j}{k+e-1}=
O(n^{k+e-2}).$$
Decomposing as
in the variance estimate by the size $r$ of the intersection of 
$A_{\bf j}$ and $A_{\bf k}$  and using the bound 
above for $|L_{\bf j}^f|$ yields
\begin{equation*}\begin{split}
\sum_{{\bf j}\in J}&\sum_{{\bf k}\in L_{\bf j}^{e}}\sum_{
{\bf l}\in L_{\bf j}^f}\E|X_{\bf j}X_{\bf k}|\E|X_{\bf l}|\\&\le
\sigma^{-3}
c_kn^{k+f-2}p^{\binom{k+f}{2}}\binom{n}{k+e_j}\sum_{r=2}^{k+(e_j\wedge e)}
\binom{k+e_j}{r}\binom{
n-k-e_j}{k+e-r}p^{\binom{k+e}{2}+\binom{k+e_j}{2}-\binom{r}{2}}\\&\le
\sigma^{-3}c_kn^{3k+e_j+e+f-4}p^{\binom{k+e_j}{2}+\binom{k+e}{2}+\binom{k+f}{2}-1},
\end{split}\end{equation*}
since the $r=2$ term yields the top-order contribution in the
range of $p$ considered here.  Moreover, it is easy to check that this
expression is maximized for $e_j=e=f=1$.    
Combining this estimate with \eqref{f_k_var}
shows that 
the contribution to the error from the second 
sum is bounded above by
$$\sigma^{-3}c_kn^{3k-1}p^{3\binom{k+1}{2}-1}\le \frac{c_k\sqrt{p}}{n},$$
which tends to zero as $n$ tends to infinity.

The first half of the sum is bounded similarly, although it requires
that the intersections
of three spanning sets of vertices be considered. Let
$r$ denote the number of points common to $A_{\bf j}$ and 
$A_{\bf k}$.  Let $p_1:=|A_{\bf j}\cap A_{\bf l}\cap A_{\bf k}^c|$, 
$p_2:=|A_{\bf j}\cap A_{\bf l}\cap A_{\bf k}|$ and $p_3:=|A_{\bf j}^c
\cap A_{\bf l}\cap A_{\bf k}|$.  Then
$$\E|X_{\bf j}X_{\bf k}X_{\bf l}|\le
c\sigma^{-3}p^{\binom{k+e_j}{2}+\binom{k+e_k}{2} +\binom{k+e_l}{2}-
  \binom{p_1+p_2}{2}-\binom{p_2+p_3}{2}-\binom{r}{2}+\binom{p_2}{2}},$$
where the constant $c$ simply accounts for the fact that the $X_{\bf
  j}$ have been centered.  The number of ways to choose ${\bf j}$,
${\bf k}$ and ${\bf l}$ is
\begin{align*}
\binom{n}{k+e_j}\binom{k+e_j}{r}\binom{n-k-e_j}{k+e_k-r}\binom{k+e_j-r}{p_1}\\
\times \binom{r}{p_2}\binom{k+e_k-r}{p_3}\binom{n-2k-e_j-e_k+r}{k+e_l-p_1-p_2-p_3}.
\end{align*}
Combining these two facts, it is perhaps slightly unpleasant but not too
hard to see that the main contribution to the error arises from the case that
$r=2$, $p_1+p_2=2$ (in fact only when $p_1\neq 0$), and $e_j=e_k=e_l=1$.
It follows that
\begin{equation*}\begin{split}
\sum_{{\bf j}\in J}\sum_{{\bf k},
{\bf l}\in L_{\bf j}}&\E|X_{\bf j}X_{\bf k}X_{\bf l}|\le
\sigma^{-3}c_kn^{3k-1}p^{3\binom{k+1}{2}-2}\le \frac{c_k}{n\sqrt{p}},
\end{split}\end{equation*}
which also tends to zero as $n$ tends to infinity.  This completes the
proof of  part 
\ref{clt_below} of the claim, finishing the proof of Theorem \ref{clt_er}.

\end{proof}

\section{Random \v{C}ech complexes}

The second model of random simplicial complex considered is
the random \v{C}ech complex.  This is a higher-dimensional analog of 
a geometric random graph, constructed explicitly below.  In order to
analyze this model, we use the same techniques used by Penrose \cite{penrose}
in his study of subgraph counts of random geometric graph.  The additional
spacial dependence that is inherent in the random variables we consider
presents an additional technical challenge, and means that Penrose's 
results cannot be applied directly to the problem.

Suppose that $\{X_i\}_{i=1}^\infty$ is an i.i.d.\ sequence of random
vectors in $\R^d$, with bounded density $f$.  Let
$\{r_n\}_{n=1}^\infty\subseteq\R_+$, such that $nr_n^d\xrightarrow{n
  \to\infty}0$ (the so-called ``sparse'' regime of geometric random 
graphs), and construct a
random \v{C}ech complex $\mathcal{C}(X_1,\ldots,X_n)$ on
$\{X_i\}_{i=1}^n$ as follows.  If $|X_i-X_j|\le2r_n$, put an edge
between $X_i$ and $X_j$; that is,  
the 1-skeleton of the complex is a
random geometric graph. More generally, make the convex hull of $\{X_{i_1}\ldots,X_{i_k}\}$
a face of the complex  
if the balls of 
radius $r_n$ about the points $\{X_{i_1}\ldots,X_{i_k}\}$ have non-trivial
intersection.

\begin{definition}
The points $\{x_1,\ldots,x_k\}\subseteq\R^d$ form an
{\it empty $( k-1)$-simplex} with respect to $r$ if for each
$j_o\in\{1,\ldots,k\}$, the intersection $\ds\bigcap_{\substack{1\le
    j\le k\\j\neq j_o}}B_{r}(x_j)$ is non-empty, but the intersection
$\ds\bigcap_{1\le j\le k}B_{r}(x_j)=\emptyset.$
\end{definition}
 Let
$h_r(x_1,\ldots,x_k)$ be the indicator that $\{x_1,\ldots,x_k\}$ form
an empty $( k-1)$-simplex with respect to $r$, and for a multiindex ${\bf
  i}=(i_1,\ldots,i_k)$ with $1\le i_1<\cdots<i_k\le n$, let $\xi_{\bf
  i}=h_{r_n}(X_{i_1},\ldots,X_{i_k})$. 
Let 
$$S_{n,k}:=\sum_{\substack{{\bf i}=(i_1,\ldots,i_k)\\1\le i_1<\cdots<i_k\le n}}
\xi_{\bf i};$$
that is, $S_{n,k}$ is the number of empty $(k-1)$-simplices in $
\mathcal{C}(X_1,\ldots,X_n).$  
Another object of equal importance in what follows is $\s_{n,k}$, 
the number of {\it isolated}
empty $k$-simples.  That is, if $\zeta_{(i_1,\ldots,i_k)}$ is the indicator
that $\{X_{i_1},\ldots,X_{i_k}\}$ form an empty $( k-1)$-simplex with respect to $r_n$
and that there are no edges between $\{X_j\}_{j\in\{i_1,\ldots,i_k\}}$ and 
$\{X_j\}_{j\notin\{i_1,\ldots,i_k\}}$, then 
$$\s_{n,k}=\sum_{\substack{{\bf i}=(i_1,\ldots,i_k)\\1\le i_1<\cdots<i_k\le n}}
\zeta_{\bf i}.$$

The random variables $S_{n,k}$ and $\s_{n,k}$ are related to $\beta_{k-1}$
as follows.  Firstly,
$\beta_{k-1}$ is bounded below by the number of isolated empty
$k$-simplices; that is, $\beta_{k-1}(\mathcal{C}(X_1,\ldots,X_n))\ge
\s_{n,k}.$ Furthermore, any contribution to $\beta_{k-1}$ not coming
from an isolated empty $( k-1)$-simplex comes from a component in
$\mathcal{C}(X_1,\ldots,X_n)$ on at least $k+1$ vertices.  In order
for such a component to contribute to $\beta_{k-1}$, $(k-2)$-dimensional faces. Such faces are
necessarily triangulated (by the construction of
$\mathcal{C}(X_1,\ldots,X_n)$), and so any further contribution to
$\beta_{k-1}$ contains at least one simplex on $k-1$ vertices, with
either an extra edge attached to each of two different vertices
(terminating in different places), or else an extra path of length two
attached to one vertex. 
Let $Y_{n,k}$ denote the number of
simplices in $\mathcal{C}(X_1,\ldots,X_n)$ on $k-1$ vertices with two extra
edges attached, counted once for each simplex on $k-1$ vertices which occurs 
and for each distinct pair of simplex vertices with an extra edge.  
Similarly, let $Z_{n,k}$ denote the number of
simplices in $\mathcal{C}(X_1,\ldots,X_n)$ on $k-1$ vertices with at
least one extra path of length 2 attached, counted once for each simplex which
occurs and for each vertex with a path of length two attached.  
The argument above shows that
\begin{equation}\label{bounds}
\s_{n,k}\le\beta_{k-2}(\mathcal{C}(X_1,\ldots,X_n))\le S_{n,k}+Y_{n,k}+Z_{n,k},
\end{equation}
where the trivial bound $\s_{n,k}\le S_{n,k}$ has also been used.

The limiting distribution of $\beta_{k-1}$ will follow as in the previous
section by proving the same
limit theorems for the upper and lower bounds of \eqref{bounds}.  The theorem
is the following.

\begin{thm}\label{CC_clt}

\

\begin{enumerate}
\item \label{CC_clt_zero}If $n^kr_n^{d(k-1)}\to0$ as $n\to\infty$, then 
$$\beta_k(\mathcal{C}(X_1,\ldots,X_n))\rightarrow0\quad a.a.s.\ as\ n\to\infty.$$

\item \label{CC_clt_poisson}If $n^kr_n^{d(k-1)}\to\alpha\in(0,\infty)$ as $n\to\infty$, then
$$d_{TV}(\beta_k(\mathcal{C}(X_1,\ldots,X_n)),Y)\le cnr_n^d,,$$
where $Y$ is a Poisson random variable with $\E[Y]=\E[\beta_k]$ and $c$
is a constant depending only on $d$, $k$, and $f$.

\item \label{CC_clt_normal}If $n^kr_n^{d(k-1)}\to\infty$ as $n\to\infty$ and $nr_n^d\to0$ as $n\to\infty$,
then
$$\frac{\beta(\mathcal{C}(X_1,\ldots,X_n))-\E[\beta(\mathcal{C}(X_1,\ldots,X_n))]}{
\sqrt{\var(\beta(\mathcal{C}(X_1,\ldots,X_n)))}}\Rightarrow\mathcal{N}(0,1).$$

\end{enumerate}
\end{thm}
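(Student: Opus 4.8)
The plan is to mimic the strategy from the Erd\H{o}s-R\'enyi case: sandwich $\beta_k(\mathcal{C}(X_1,\ldots,X_n))$ between the lower bound $\s_{n,k+1}$ and the upper bound $S_{n,k+1}+Y_{n,k+1}+Z_{n,k+1}$ via \eqref{bounds}, prove that all these counting random variables satisfy a central limit theorem with the same asymptotic variance scaling, and then use the same ``fiddling'' argument (Chebyshev plus the three-term splitting in \eqref{fiddling}) to transfer the CLT to $\beta_k$. Concretely, I would first establish that $S_{n,k+1}$, being a sum of indicators $\xi_{\bf i}$ of empty simplices, is a dissociated-type sum to which the Baldi--Karen--Rinott bound \eqref{BKR-thm} applies: fix a vertex $i_1$, note that $\xi_{\bf i}$ and $\xi_{\bf i'}$ are independent unless the underlying point sets are within distance $O(r_n)$, and so the effective ``dependency neighborhood'' $L_{\bf i}$ has size of order $(nr_n^d)^{k}$ (each additional point must lie in a ball of radius $O(r_n)$ around an already-chosen point). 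Computing $\E[\xi_{\bf i}] = \Theta(r_n^{d(k-1)} \cdot (\text{a bounded-density factor}))$ via a change of variables $x_j \mapsto x_1 + r_n u_j$ and the boundedness of $f$, one gets $\E[S_{n,k+1}] = \Theta(n^{k+1} r_n^{dk})$ and $\var(S_{n,k+1}) = \Theta(n^{k+1} r_n^{dk})$ in the regime $n^{k+1} r_n^{dk}\to\infty$, $nr_n^d\to0$; plugging these into \eqref{BKR-thm} gives an error bound that is a negative power of $n^{k+1} r_n^{dk}$ times a positive power of $nr_n^d$, hence $\to 0$.

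Next I would handle the lower bound $\s_{n,k+1}$ and the correction terms $Y_{n,k+1}$, $Z_{n,k+1}$. The key observation, exactly as in the ER argument, is that in the normal regime these are all lower-order compared to $S_{n,k+1}$: the isolation constraint in $\zeta_{\bf i}$ costs a multiplicative factor $(1 - O(nr_n^d)) = 1 - o(1)$, so $\E[\s_{n,k+1}] \simeq \E[S_{n,k+1}]$ and more importantly $\var(\s_{n,k+1}) \simeq \var(S_{n,k+1})$; meanwhile $Y_{n,k+1}$ and $Z_{n,k+1}$ each carry an extra factor of order $nr_n^d = o(1)$ relative to $S_{n,k+1}$ because of the extra edge/path, so $\E[Y_{n,k+1}] + \E[Z_{n,k+1}] = o(\sqrt{\var(S_{n,k+1})})$ and likewise $\var(Y_{n,k+1}) + \var(Z_{n,k+1}) + |\cov(S_{n,k+1}, Y_{n,k+1}+Z_{n,k+1})| = o(\var(S_{n,k+1}))$. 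These are the analogues of part \ref{var_equiv} of the earlier Claim. Each of $\s_{n,k+1}$, $Y_{n,k+1}$, $Z_{n,k+1}$ is again a dissociated sum, so \eqref{BKR-thm} gives a CLT for each (or, for the negligible ones, one only needs the variance bounds, not a CLT, since they wash out in the Chebyshev step). Then the verbatim ``fiddling'' computation of \eqref{fiddling} applied to $\P[\,\cdot\le t\,]$ with $\tilde{f}_k$ replaced by $S_{n,k+1}+Y_{n,k+1}+Z_{n,k+1}$ and $f_k$ replaced by $\s_{n,k+1}$ completes the argument.

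The main obstacle, and the reason this is ``more subtle'' than the ER case as the authors warn, is the spatial-dependence bookkeeping in the moment estimates: unlike cliques in $G(n,p)$, whose probabilities depend only on the combinatorial intersection pattern $r$, the probabilities $\E[\xi_{\bf i}\xi_{\bf j}]$, $\E[\xi_{\bf i}\xi_{\bf j}\xi_{\bf l}]$ here depend on the geometric configuration of the overlapping point sets, and one must integrate against the density $f$ over the appropriate cone-shaped regions in $(\R^d)^{\text{(number of points)}}$. The right tool is a scaling/localization argument: condition on (say) the position of one point, rescale the remaining coordinates by $r_n$, and use that $f$ is bounded and (for the leading-order constant in the variance) approximately constant on the $r_n$-scale; this is precisely where one ``extends some of Penrose's results for subgraph counts,'' since the empty-simplex indicator $h_{r_n}$ is a more complicated functional of the configuration than a subgraph indicator. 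Once the moment asymptotics $\E[S_{n,k+1}], \var(S_{n,k+1}) = \Theta(n^{k+1}r_n^{dk})$ and the corresponding statements for the other three counts are in hand, everything else is a transcription of the ER proof. I would also remark at the start that the $k=0$ case (where $\beta_0$ is the number of components of the geometric graph) is already covered by Penrose \cite{penrose}, so it suffices to treat $k\ge1$, and that parts \ref{CC_clt_zero} and \ref{CC_clt_poisson} of the theorem follow from the same moment estimates combined with a first-moment argument and a Poisson-approximation (Chen--Stein) argument respectively, rather than the normal-approximation machinery.
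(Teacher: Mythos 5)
The sandwich \eqref{bounds}, the first-moment argument for part \ref{CC_clt_zero}, and the Chen--Stein Poisson approximation for part \ref{CC_clt_poisson} are all the right ideas and match the paper. But your plan for part \ref{CC_clt_normal} has a genuine gap on the lower bound of the sandwich. You propose to apply the Baldi--Rinott dissociated-sum bound \eqref{BKR-thm} to $\s_{n,k+1}$, but $\s_{n,k+1}$ is \emph{not} a dissociated sum: the summand $\zeta_{\bf i}$ is the indicator that $\{X_{i_1},\ldots,X_{i_{k+1}}\}$ forms an empty simplex \emph{and} that no edge joins this set to any of the remaining $n-k-1$ points, so $\zeta_{\bf i}$ is a function of \emph{all} of $X_1,\ldots,X_n$, not merely of the points indexed by $\bf i$. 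Thus the independence hypothesis behind \eqref{BKR-thm} fails, and this is precisely the ``additional spatial dependence'' the introduction warns about. You also cannot rescue the argument by replacing $\s_{n,k+1}$ with the genuinely dissociated $S_{n,k+1}$ and calling the difference negligible: $\E[S_{n,k+1}-\s_{n,k+1}]$ is of order $(nr_n^d)\,n^{k+1}r_n^{dk}$, while $\sqrt{\var(S_{n,k+1})}$ is of order $\sqrt{n^{k+1}r_n^{dk}}$, so the ratio $(nr_n^d)\sqrt{n^{k+1}r_n^{dk}}$ need not tend to zero once $n^{k+1}r_n^{dk}$ grows faster than $(nr_n^d)^{-2}$, which is allowed under the hypothesis of part \ref{CC_clt_normal}. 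The same calculation shows that your assertion $\E[Y_{n,k+1}]+\E[Z_{n,k+1}]=o\bigl(\sqrt{\var(S_{n,k+1})}\bigr)$ fails in that sub-regime, so the transfer from a CLT for $S_{n,k+1}$ alone to one for $S_{n,k+1}+Y_{n,k+1}+Z_{n,k+1}$ also needs a different argument.

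What the paper actually does to close these gaps is the heart of the section and is missing from your proposal. For the upper bound it proves a Poisson approximation in total variation with the \emph{correct} mean $\E[\s_{n,k+1}]$ (Theorem \ref{upper-Poisson}), from which the CLT follows because a Poisson of diverging mean is asymptotically normal; this avoids ever comparing $\E[S+Y+Z]$ to $\E[S]$ against $\sqrt{\var(S)}$. For the lower bound it \emph{Poissonizes}: replace the $n$ i.i.d.\ points by a Poisson process of intensity $nf$, exploit the spatial independence of the Poisson process to cut $\R^d$ into $r_n$-cubes and apply Penrose's bounded-degree dependency-graph normal approximation (Theorem \ref{normal}) to get a CLT for $\s_{n,k+1}^P$, and then use Penrose's de-Poissonization lemma (Theorem \ref{de-Poisson}), which requires verifying uniform control of the increment moments $\E[R_{m,n}]$, $\E[R_{m,n}R_{m',n}]$, $\E[R_{m,n}^2]$. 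None of these three steps --- Poisson approximation for the upper bound, Poissonization + dependency-graph CLT, and de-Poissonization --- appears in your proposal, and without at least one workaround your argument does not reach the conclusion in the full range $n^{k+1}r_n^{dk}\to\infty$, $nr_n^d\to 0$.
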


The first step in proving Theorem
\ref{CC_clt} is to determine the order in $n$ and $r_n$ of
$\E[\s_{n,k}]$ and $\E[S_{n,k}+Y_{n,k}+Z_{n,k}]$.  In fact, slightly more is needed.   Let $A$ be an open subset of $\R^d$
such that $vol(\partial A)=0$.  Let $\X$ be a finite subset of $\R^d$,
and call $x\in\X$ the ``left-most'' point of $\X$ (denoted $LMP(\X)$)
if $x$ is the first element of $\X$ when $\X$ is ordered
lexicographically.  Now, define $S_{n,k,A}$ to be the number of empty
$( k-1)$-simplices formed from $X_1,\ldots,X_n$, such that
the left-most point of the $k$-simplex is in $A$.  Define $\s_{n,k,A}$
in the analogous way.

\begin{lemma}\label{exp-order}
For $k>1$, let $$\mu_A:=\left(\int_{A}f(x)^kdx\right)\int_{(\R^d)^{k-1}}h_1(0,y_2,\ldots,y_k)
d(y_2,\ldots,y_k).$$
Then 
$$\lim_{n\to\infty}n^{-k}r_n^{-d(k-1)}\E\left[S_{n,k,A}\right]=
\lim_{n\to\infty}n^{-k}r_n^{-d(k-1)}\E[\s_{n,k,A}]=\frac{\mu_A}{k!}.$$
\end{lemma}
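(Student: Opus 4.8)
The plan is to compute the expectation of $S_{n,k,A}$ directly by linearity, using the exchangeability of the $X_i$ and a change of variables that rescales by $r_n$. Writing $S_{n,k,A} = \sum_{\bf i} \xi_{\bf i} \1[LMP(\{X_{i_1},\ldots,X_{i_k}\})\in A]$, linearity of expectation gives
\begin{equation*}
\E[S_{n,k,A}] = \binom{n}{k} \int_{(\R^d)^k} h_{r_n}(x_1,\ldots,x_k)\,\1[LMP(\{x_1,\ldots,x_k\})\in A]\prod_{j=1}^k f(x_j)\,d(x_1,\ldots,x_k).
\end{equation*}
First I would symmetrize over which point is leftmost (replacing the $LMP$ indicator by a sum of $k$ indicators, only one of which is nonzero for a given configuration), fix the leftmost point to be $x_1$, and substitute $x_j = x_1 + r_n y_j$ for $j = 2,\ldots,k$, with $y_1 = 0$. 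Since $h_r$ is scale-equivariant — $h_{r_n}(x_1, x_1 + r_n y_2, \ldots) = h_1(0, y_2,\ldots,y_k)$ because an intersection of balls of radius $r_n$ is nonempty iff the rescaled configuration's unit balls intersect — the $h$ factor becomes $h_1(0,y_2,\ldots,y_k)$, and the Jacobian contributes $r_n^{d(k-1)}$. After this substitution the integral over $x_1$ is $\int_{\R^d} f(x_1) \prod_{j\ge 2} f(x_1 + r_n y_j)\, dx_1$ (with the $LMP\in A$ constraint), and $\binom{n}{k} \simeq n^k/k!$.

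The main work is then the limit $n\to\infty$ (so $r_n\to 0$). The inner product $\prod_{j=2}^k f(x_1 + r_n y_j)$ converges pointwise a.e. to $f(x_1)^{k-1}$, and since $h_1$ is compactly supported in $(y_2,\ldots,y_k)$ (an empty simplex has bounded diameter) and $f$ is bounded, dominated convergence justifies passing the limit inside; the $LMP\in A$ condition degenerates, as $r_n\to 0$, to the single condition $x_1 \in A$ up to a boundary term controlled by $\vol(\partial A) = 0$. This yields
\begin{equation*}
\lim_{n\to\infty} n^{-k} r_n^{-d(k-1)} \E[S_{n,k,A}] = \frac{1}{k!}\left(\int_A f(x)^k\,dx\right)\int_{(\R^d)^{k-1}} h_1(0,y_2,\ldots,y_k)\,d(y_2,\ldots,y_k) = \frac{\mu_A}{k!},
\end{equation*}
as desired. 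I expect the boundary-degeneration step — showing that the discrepancy between $\{LMP \in A\}$ and $\{x_1 \in A\}$ has vanishing contribution — to be the only genuinely delicate point, and it is handled by noting all other points lie within $O(r_n)$ of $x_1$, so the discrepancy region is contained in an $O(r_n)$-neighborhood of $\partial A$, whose $f$-measure tends to $0$.

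For $\s_{n,k,A}$, the count of \emph{isolated} empty simplices, the plan is to compare with $S_{n,k,A}$ and show the difference is lower order. Write $\zeta_{\bf i} = \xi_{\bf i}\,\1[\text{no edge from }\{X_{i_j}\}\text{ to the other }n-k\text{ points}]$, so $\s_{n,k,A} \le S_{n,k,A}$ and $S_{n,k,A} - \s_{n,k,A}$ counts empty simplices with at least one extra incident edge. By linearity and a union bound over which of the remaining $n - k$ points supplies the edge, $\E[S_{n,k,A} - \s_{n,k,A}] \le (n-k)\binom{n}{k}\int h_{r_n}(x_1,\ldots,x_k)\,\1[\exists j: |x_1 - x_{k+1}| \le 2r_n \text{ or}\ldots]\prod_{j=1}^{k+1} f(x_j)$; the same rescaling, now introducing an extra factor $r_n^d$ from the constraint ball on $x_{k+1}$ and an extra factor $n$, gives order $n^{k+1} r_n^{d(k-1)} r_n^d = (n r_n^d) \cdot n^k r_n^{d(k-1)}$, which is $o(n^k r_n^{d(k-1)})$ precisely because $n r_n^d \to 0$. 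Hence $\E[\s_{n,k,A}]$ and $\E[S_{n,k,A}]$ have the same leading asymptotics, completing the proof.
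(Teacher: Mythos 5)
Your overall structure — rescaling $x_j = x_1 + r_n y_j$, using scale-equivariance to replace $h_{r_n}$ by $h_1$, handling the $\mathrm{LMP}\in A$ constraint via the boundary, and comparing $\s_{n,k,A}$ with $S_{n,k,A}$ by a union bound over the extra adjacent point — matches the paper's route. But there is a gap in the convergence step. You assert that $\prod_{j=2}^k f(x_1 + r_n y_j)$ converges pointwise a.e.\ to $f(x_1)^{k-1}$ and then invoke dominated convergence. The density $f$ is only assumed bounded and measurable, not continuous; for such $f$ it is \emph{not} true in general that $f(x+r_n y)\to f(x)$ for a.e.\ $(x,y)$. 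The Lebesgue differentiation theorem controls averages of $f$ over shrinking balls, not the value of $f$ along a fixed ray of translates through $x$, and approximate continuity at a.e.\ point does not give pointwise convergence along a prescribed sequence of translates. So the dominated-convergence step, as you justified it, does not go through.

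The fix is short and standard, and either option repairs the argument. Option (i): argue at the level of averages, as the paper does — show by induction on $k$ that $r_n^{-d(k-1)}\int_{B_{2r_n}(x_1)^{k-1}}\bigl|\prod_{i=2}^k f(x_i)-f(x_1)^{k-1}\bigr|\,d(x_2,\ldots,x_k)\to 0$ for a.e.\ $x_1$ (i.e.\ at every Lebesgue point of $f$), and only then apply dominated convergence in $x_1$. Option (ii): telescope $\prod_j f(x_1+r_n y_j)-f(x_1)^{k-1}$ to bound its absolute value by $\|f\|_\infty^{k-2}\sum_{\ell}|f(x_1+r_n y_\ell)-f(x_1)|$, integrate in $x_1$ first, use continuity of translation in $L^1$ (namely $\|f(\cdot+h)-f\|_{L^1}\to 0$ as $h\to 0$), and dominate the remaining $y$-integral by a multiple of $h_1(0,y_2,\ldots,y_k)$. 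Your handling of the discrepancy between $\{\mathrm{LMP}\in A\}$ and $\{x_1\in A\}$ via an $O(r_n)$-neighborhood of $\partial A$ is fine, and your union-bound comparison of $\s_{n,k,A}$ with $S_{n,k,A}$ is a valid alternative to the paper's two-sided bound $(1-\|f\|_\infty\theta_d(4r_n)^d)^{n-k}\,\E[S_{n,k,A}]\le\E[\s_{n,k,A}]\le\E[S_{n,k,A}]$; both yield the same conclusion from $nr_n^d\to 0$.
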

Observe that $\mu_A$ depends only on $f$  and $A$ 
and can be trivially bounded by
$\|f\|_\infty^{k-1}(2^d\theta_d)^{k-1},$ where $\theta_d$ is the volume of the
unit ball in $\R^d$.

\begin{lemma}\label{exp-upper}
Let $$\mu':=\left(\int_{\R^d}f(x)^{k+1}dx\right)\int_{(\R^d)^k}
g_1^{1,2}(0,y_1,\ldots,y_k)dy_1\cdots d y_k,$$
where $g_1^{1,2}(x_0,\ldots,x_k)$ is the indicator that 
$\{x_0,\ldots,x_{k-2}\}$ form a simplex (where a
  complex is built as described on $x_0,\ldots,x_{k}$ with
  threshhold radius $1$) and that $\{x_0,x_{k-1}\}$ and
  $\{x_1,x_{k}\}$ are edges.  
Let $$\mu'':=\left(\int_{\R^d}f(x)^{k+1}dx\right)\int_{(\R^d)^k}
k_1^{1}(0,y_1,\ldots,y_k)dy_1\cdots d y_k.$$
Let $k^{1}_{1}(x_0,\ldots,x_{k})$
  be the indicator that $\{x_0,\ldots,x_{k-2}\}$ form a simplex and
  that $\{x_0,x_{k-1}\}$ and $\{x_{k-1},x_{k}\}$ are edges.
Then 
$$\lim_{n\to\infty}n^{-(k+1)}r_n^{-dk}\E[Y_{n,k}]=\frac{\mu'}{2(k-3)!},$$
and 
$$\lim_{n\to\infty}n^{-(k+1)}r_n^{-dk}\E[Z_{n,k}]=\frac{\mu''}{(k-2)!}.$$
\end{lemma}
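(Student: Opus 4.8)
The plan is to carry out a first-moment computation for $Y_{n,k}$ and $Z_{n,k}$ of exactly the kind behind Lemma~\ref{exp-order}; the only genuinely new ingredient is reading off the combinatorial prefactors $\tfrac{1}{2(k-3)!}$ and $\tfrac{1}{(k-2)!}$ from the automorphisms of the relevant configurations.

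\textbf{Step 1: reduce to an integral over $k+1$ points.} A configuration counted by $Y_{n,k}$ (resp.\ $Z_{n,k}$) uses exactly $k+1$ vertices: the $k-1$ spanning the simplex, together with the two external endpoints of the two pendant edges (resp.\ of the length-two path). Assigning the chosen points to the roles $x_0,\dots,x_k$ appearing in $g_1^{1,2}$ and counting ordered assignments, I would write
\[
\E[Y_{n,k}] \;=\; \frac{1}{|\mathrm{Aut}|}\cdot\frac{n!}{(n-k-1)!}\cdot\E\!\left[g_{r_n}^{1,2}(X_1,\dots,X_{k+1})\right],
\]
where $|\mathrm{Aut}|$ is the number of relabelings of $(x_0,\dots,x_k)$ preserving the combinatorial type. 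For the $Y$-shape (simplex on $\{x_0,\dots,x_{k-2}\}$, edges $\{x_0,x_{k-1}\}$ and $\{x_1,x_{k}\}$) one has $|\mathrm{Aut}|=2(k-3)!$: the $k-3$ simplex vertices carrying no pendant edge may be permuted freely, and one may simultaneously swap the two anchor vertices $x_0\leftrightarrow x_1$ together with their pendants $x_{k-1}\leftrightarrow x_{k}$. For the $Z$-shape (simplex on $\{x_0,\dots,x_{k-2}\}$ with the rigid path $x_0$–$x_{k-1}$–$x_{k}$) only the $k-2$ simplex vertices other than the anchor $x_0$ may be permuted, so $|\mathrm{Aut}|=(k-2)!$. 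Since $\frac{n!}{(n-k-1)!}\sim n^{k+1}$, this gives $\E[Y_{n,k}]\sim \frac{n^{k+1}}{2(k-3)!}\E[g_{r_n}^{1,2}(X_1,\dots,X_{k+1})]$ and $\E[Z_{n,k}]\sim\frac{n^{k+1}}{(k-2)!}\E[k_{r_n}^{1}(X_1,\dots,X_{k+1})]$, once one checks — using the sparse hypothesis $nr_n^d\to0$, so that each additional vertex contributes a factor $nr_n^d\to 0$ — that configurations on more than $k+1$ vertices (a simplex with a third pendant edge, two pendants meeting at a common external vertex, and the like) contribute only $o(n^{k+1}r_n^{dk})$.

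\textbf{Step 2: evaluate the $(k+1)$-point integral.} Writing $\E[g_{r_n}^{1,2}(X_1,\dots,X_{k+1})]=\int_{(\R^d)^{k+1}}g_{r_n}^{1,2}(x_0,\dots,x_k)\prod_{i=0}^k f(x_i)\,d(x_0,\dots,x_k)$ and substituting $y_i=r_n^{-1}(x_i-x_0)$ for $i=1,\dots,k$, the translation invariance and scaling of the indicator give $g_{r_n}^{1,2}(x_0,x_0+r_ny_1,\dots,x_0+r_ny_k)=g_1^{1,2}(0,y_1,\dots,y_k)$, hence
\[
\E\!\left[g_{r_n}^{1,2}(X_1,\dots,X_{k+1})\right]=r_n^{dk}\int_{\R^d}f(x_0)\!\int_{(\R^d)^k}\!\prod_{i=1}^k f(x_0+r_ny_i)\;g_1^{1,2}(0,y_1,\dots,y_k)\,d(y_1,\dots,y_k)\,dx_0.
\]
The indicator $g_1^{1,2}$ is supported on a bounded set of $(y_1,\dots,y_k)$ (all points lie within an $O(1)$-ball of $x_0$) and $f$ is bounded, so the integrand is dominated by an integrable function; at every Lebesgue point $x_0$ of $f$ one has $\prod_i f(x_0+r_ny_i)\to f(x_0)^k$, uniformly over the bounded $y$-region, by the same inductive Lebesgue-point estimate as in the proof of Lemma~\ref{exp-order}. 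Dominated convergence then gives $r_n^{-dk}\E[g_{r_n}^{1,2}(X_1,\dots,X_{k+1})]\to\big(\int f^{k+1}\big)\big(\int g_1^{1,2}(0,y_1,\dots,y_k)\,dy\big)=\mu'$, and identically $r_n^{-dk}\E[k_{r_n}^{1}(X_1,\dots,X_{k+1})]\to\mu''$. Combining with Step~1 yields $\lim_n n^{-(k+1)}r_n^{-dk}\E[Y_{n,k}]=\frac{\mu'}{2(k-3)!}$ and $\lim_n n^{-(k+1)}r_n^{-dk}\E[Z_{n,k}]=\frac{\mu''}{(k-2)!}$.

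The main obstacle is Step~1: turning the verbal descriptions of $Y_{n,k}$ and $Z_{n,k}$ into clean sums of the indicators $g^{1,2}_{r_n}$ and $k^{1}_{r_n}$ over ordered tuples, with the correct automorphism factors, and verifying that degenerate or "denser" configurations (coincident points, or $(k+1)$-point configurations carrying more adjacencies than the generic shape) do not perturb the leading constant. Step~2 is then exactly the change-of-variables-plus-dominated-convergence argument already used for Lemma~\ref{exp-order}.
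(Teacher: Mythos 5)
Your proof is correct, and it is essentially the same computation the paper has in mind (the paper itself only remarks that the proofs are ``identical to the proofs of the corresponding facts for subgraph counts'' in Penrose's book, and the change-of-variables/dominated-convergence argument you use is exactly Penrose's, and also the one that appears in the paper's own proof of Lemma~\ref{exp-order}). Your combinatorial prefactors are right: with $Y_{n,k}=\sum_{{\bf j}}\sum_{1\le p<q\le k-1}\gamma^{p,q}_{{\bf j}}$ and $Z_{n,k}=\sum_{{\bf j}}\sum_{1\le p\le k-1}\eta^{p}_{{\bf j}}$ as in the proof of Theorem~\ref{upper-Poisson}, one checks directly that $\binom{n}{k-1}(n-k+1)(n-k)\binom{k-1}{2}=\frac{n!}{(n-k-1)!}\cdot\frac{1}{2(k-3)!}$ and $\binom{n}{k-1}(n-k+1)(n-k)(k-1)=\frac{n!}{(n-k-1)!}\cdot\frac{1}{(k-2)!}$, which matches your automorphism counts $2(k-3)!$ and $(k-2)!$. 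One small remark: because $Y_{n,k}$ and $Z_{n,k}$ are defined exactly as sums of indicators over $(k+1)$-tuples, the identity $\E[Y_{n,k}]=\frac{1}{2(k-3)!}\frac{n!}{(n-k-1)!}\E[g^{1,2}_{r_n}(X_1,\ldots,X_{k+1})]$ is exact, not merely asymptotic; the hedging about ``configurations on more than $k+1$ vertices'' at the end of your Step~1 is unnecessary. Also, ``uniformly over the bounded $y$-region'' should be replaced by convergence in $L^1$ over the bounded $y$-region at Lebesgue points of $f$, but this is a cosmetic imprecision and the cited inductive Lebesgue-point estimate from Lemma~\ref{exp-order} handles it.
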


\begin{cor}
For $S_{n,k},Y_{n,k},Z_{n,k}$ as above, 
$$\E[S_{n,k}+Y_{n,k}+Z_{n,k}]\simeq\E[\s_{n,k}].$$
\end{cor}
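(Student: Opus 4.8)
The plan is to compare orders of magnitude, with all the real work delegated to the two preceding lemmas. First I would apply Lemma~\ref{exp-order} with $A=\R^d$, for which $\vol(\partial A)=\vol(\emptyset)=0$, so the hypothesis of the lemma is met. Writing $\mu:=\mu_{\R^d}$, this gives
$$\lim_{n\to\infty}n^{-k}r_n^{-d(k-1)}\E[S_{n,k}]=\lim_{n\to\infty}n^{-k}r_n^{-d(k-1)}\E[\s_{n,k}]=\frac{\mu}{k!}.$$
In particular $\E[S_{n,k}]\simeq\E[\s_{n,k}]$, and, since $\mu>0$, both quantities are of exact order $n^kr_n^{d(k-1)}$; more precisely, for all large $n$, $\E[\s_{n,k}]\ge\tfrac{\mu}{2\,k!}\,n^kr_n^{d(k-1)}$.

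Next I would invoke Lemma~\ref{exp-upper}, which yields $\E[Y_{n,k}]=O\bigl(n^{k+1}r_n^{dk}\bigr)$ and $\E[Z_{n,k}]=O\bigl(n^{k+1}r_n^{dk}\bigr)$. Dividing by the lower bound for $\E[\s_{n,k}]$ from the first step,
$$\frac{\E[Y_{n,k}]+\E[Z_{n,k}]}{\E[\s_{n,k}]}=O\!\left(\frac{n^{k+1}r_n^{dk}}{n^kr_n^{d(k-1)}}\right)=O\bigl(nr_n^d\bigr),$$
which tends to $0$ by the standing sparse-regime assumption $nr_n^d\to0$. Hence $\E[Y_{n,k}]+\E[Z_{n,k}]=o\bigl(\E[\s_{n,k}]\bigr)$.

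Finally I would combine the two steps:
$$\E[S_{n,k}+Y_{n,k}+Z_{n,k}]=\E[S_{n,k}]+o\bigl(\E[\s_{n,k}]\bigr)=\E[\s_{n,k}]\bigl(1+o(1)\bigr),$$
using $\E[S_{n,k}]\simeq\E[\s_{n,k}]$ from the first step, which is exactly the assertion. There is no genuine obstacle here: the content lies entirely in Lemmas~\ref{exp-order} and~\ref{exp-upper}, which pin down the two relevant scales $n^kr_n^{d(k-1)}$ and $n^{k+1}r_n^{dk}$, and the separation of scales is governed by the sparseness hypothesis. The only point deserving a word of care is the observation that $\mu=\mu_{\R^d}>0$ (so that the denominators above are truly of order $n^kr_n^{d(k-1)}$ and not smaller); this holds because $\int_{\R^d}f^k>0$ and, in the range of $k$ under consideration, empty $(k-1)$-simplices occur with positive probability, so the integral $\int h_1(0,y_2,\ldots,y_k)\,d(y_2,\ldots,y_k)$ is positive.
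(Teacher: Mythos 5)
Your proof is correct and is exactly the intended argument: Lemma~\ref{exp-order} with $A=\R^d$ pins $\E[S_{n,k}]$ and $\E[\s_{n,k}]$ both at order $n^kr_n^{d(k-1)}$ with the same constant $\mu/k!$, Lemma~\ref{exp-upper} pins $\E[Y_{n,k}]$ and $\E[Z_{n,k}]$ at order $n^{k+1}r_n^{dk}$, and the ratio $nr_n^d\to0$ of the sparse regime makes the latter negligible. The paper defers the proof to the corresponding subgraph-count arguments in Penrose rather than writing it out, but your reduction to the two lemmas plus the separation-of-scales observation is precisely what is meant; your extra care in noting $\mu>0$ (so the leading term genuinely dominates) is a worthwhile addition.
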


The proofs of these facts are identical to the proofs of the corresponsing 
facts for subgraph counts of random geometric graphs given in Chapter 3 of 
\cite{penrose}.

\medskip

This last corollary is already enough to prove part \ref{CC_clt_zero} of Theorem
\ref{CC_clt}: if $n^kr_n^{d(k-1)}\to0$ as $n\to\infty$, then 
$$\P\big[\beta_k(\mathcal{C}(X_1,\ldots,X_n)\ge 1\big]\le\E\big[
\beta_k(\mathcal{C}(X_1,\ldots,X_n)\big]\le\E\big[S_{n,k}+Y_{n,k}+
Z_{n,k}\big]\xrightarrow{n\to\infty}0.$$ 

\medskip

In order to prove part \ref{CC_clt_poisson}, the following 
abstract approximation theorem of Arratia, Goldstein, and Gordon is needed.

\begin{thm}[\cite{agg}]\label{Poi-approx}
Let $(\xi_i,i\in I)$ be a finite collection of Bernoulli random variables
with dependency graph $(I,\sim)$.  Let $p_i:=\E[\xi_i]$ and $p_{ij}:=\E[\xi_i
\xi_j].$  Let $\lambda:=\sum_{i\in I}p_i,$ and let $W:=\sum_{i\in I}\xi_i$.  
Then $$d_{TV}(W,Poi(\lambda))\le\min(3,\lambda^{-1})\left(\sum_{i\in I}\sum_{
\substack{j\sim i\\j\neq i}}p_{ij}+\sum_{i\in I}\sum_{j\sim i}p_ip_j\right). $$ 
\end{thm}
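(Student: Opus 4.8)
The plan is to prove Theorem~\ref{Poi-approx} by the Chen--Stein method, the standard route to Poisson approximation under a dependency-graph hypothesis. Write $Z\sim\mathrm{Poi}(\lambda)$. For a subset $B\subseteq\Z_{\ge0}$ one solves the Stein equation $\lambda g(j+1)-jg(j)=\1_B(j)-\P[Z\in B]$ for $j\ge0$, with the convention $g(0)=0$; the unique bounded solution is $g_B(j+1)=\frac{j!}{\lambda^{j+1}}e^{\lambda}\big(\P[Z\le j,\,Z\in B]-\P[Z\in B]\,\P[Z\le j]\big)$. The key analytic input is the classical Barbour--Eagleson bound on the Stein factor, $\|\Delta g_B\|:=\sup_{j\ge0}\big|g_B(j+1)-g_B(j)\big|\le\frac{1-e^{-\lambda}}{\lambda}\le\min(1,\lambda^{-1})$. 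Granting this, substituting $W$ for $j$ in the Stein equation and taking expectations gives $\P[W\in B]-\P[Z\in B]=\E\big[\lambda g_B(W+1)-Wg_B(W)\big]$ for every $B$, so that $d_{TV}(W,\mathrm{Poi}(\lambda))=\sup_B\big|\E[\lambda g_B(W+1)-Wg_B(W)]\big|$, and it suffices to bound $\big|\E[\lambda g(W+1)-Wg(W)]\big|$ for an arbitrary Stein solution $g$ with $\|\Delta g\|\le\min(1,\lambda^{-1})$.

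Next I would bring in the dependency graph. For $i\in I$ put $N_i:=\{j\in I:j\sim i\}$ (taking $i\in N_i$) and decompose $W=W_i+Z_i$ with $Z_i:=\sum_{j\in N_i}\xi_j$ and $W_i:=\sum_{j\notin N_i}\xi_j$; by the defining property of the dependency graph, $\xi_i$ is independent of $W_i$. Since $\lambda=\sum_{i}p_i$ and $W=\sum_{i}\xi_i$,
$$\E\big[\lambda g(W+1)-Wg(W)\big]=\sum_{i\in I}\Big(p_i\,\E[g(W+1)]-\E[\xi_i g(W)]\Big).$$
For each $i$ I would insert the decoupled term $p_i\,\E[g(W_i+1)]$ and use independence in the form $p_i\,\E[g(W_i+1)]=\E[\xi_i]\,\E[g(W_i+1)]=\E[\xi_i g(W_i+1)]$, which rewrites the $i$-th summand as
$$p_i\big(\E[g(W_i+Z_i+1)]-\E[g(W_i+1)]\big)+\E\big[\xi_i\big(g(W_i+1)-g(W_i+Z_i)\big)\big].$$

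Finally I would estimate the two resulting sums by telescoping $g$ one unit at a time. For the first, $\big|g(W_i+Z_i+1)-g(W_i+1)\big|\le Z_i\,\|\Delta g\|$, so $\sum_i p_i\,\big|\E[\cdots]\big|\le\|\Delta g\|\sum_i p_i\,\E[Z_i]=\|\Delta g\|\sum_{i\in I}\sum_{j\sim i}p_ip_j$. For the second, the summand vanishes on $\{\xi_i=0\}$, while on $\{\xi_i=1\}$ one has $Z_i-1=\sum_{j\in N_i,\,j\ne i}\xi_j$ and $\big|g(W_i+1)-g(W_i+Z_i)\big|\le(Z_i-1)\|\Delta g\|$, so $\sum_i\E\big[\xi_i\,|\cdots|\big]\le\|\Delta g\|\sum_i\E\big[\xi_i\!\!\sum_{j\in N_i,\,j\ne i}\!\!\xi_j\big]=\|\Delta g\|\sum_{i\in I}\sum_{\substack{j\sim i\\ j\ne i}}p_{ij}$. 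Adding the two bounds and using $\|\Delta g\|\le\min(1,\lambda^{-1})\le\min(3,\lambda^{-1})$ yields exactly the claimed inequality. The only genuinely non-routine ingredient is the Stein factor bound $\|\Delta g_B\|\le(1-e^{-\lambda})/\lambda$: it follows from the explicit formula above by using the linearity of $h\mapsto g_h$ to write $g_B=\sum_{k\in B}g_{\{k\}}$ and carrying out a sign analysis of the first differences $g_{\{k\}}(j+1)-g_{\{k\}}(j)$; this is where all the real work is, everything else being elementary bookkeeping with the dependency structure.
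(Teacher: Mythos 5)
The paper does not prove Theorem~\ref{Poi-approx}; it is quoted directly from Arratia, Goldstein, and Gordon \cite{agg}, so there is no in-paper argument for you to be compared against. Judged on its own terms, your proof is the standard Chen--Stein/dependency-graph argument and it is correct, and it is in fact essentially the proof given in the cited source. The decomposition $W=W_i+Z_i$ with $W_i$ independent of $\xi_i$, the substitution $p_i\E[g(W_i+1)]=\E[\xi_i g(W_i+1)]$, and the one-step telescoping that turns the two error terms into $\sum_i\sum_{j\sim i}p_ip_j$ and $\sum_i\sum_{j\sim i,\,j\ne i}p_{ij}$ are exactly right, as is the observation that on $\{\xi_i=1\}$ one should telescope over $Z_i-1$ steps so that the diagonal term is excluded. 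You obtain $\|\Delta g\|\le(1-e^{-\lambda})/\lambda\le\min(1,\lambda^{-1})$, which is a slightly \emph{stronger} constant than the $\min(3,\lambda^{-1})$ appearing in the statement (the latter is how Penrose records the bound; it is not tight), so the claimed inequality follows a fortiori.

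Two small caveats. First, the one genuinely nontrivial analytic input, the Stein magic factor bound $\sup_j|g_B(j+1)-g_B(j)|\le(1-e^{-\lambda})/\lambda$, is only sketched; the outline you give (decompose $g_B=\sum_{k\in B}g_{\{k\}}$, analyze the sign pattern of the first differences of $g_{\{k\}}$, reduce to the explicit value of the largest increment) is the correct route, but it would need to be carried out to count as a complete proof. Second, be aware of a total-variation normalization mismatch: the paper's own definition of $d_{TV}$ (supremum of $|\E f(X)-\E f(Y)|$ over $\|f\|_\infty\le1$) is \emph{twice} the ``supremum over events'' quantity $\sup_B|\P[W\in B]-\P[Z\in B]|$ that your Stein argument bounds. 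Under the paper's normalization your constant becomes $\min(2,2\lambda^{-1})$, which is not $\le\min(3,\lambda^{-1})$ for all $\lambda$ (e.g.\ $\lambda=1$). The theorem as it appears in \cite{agg} and in Penrose's book uses the ``supremum over events'' convention, under which your bound is a strict improvement; the discrepancy lies in the paper's notational conventions, not in your argument, but it is worth flagging explicitly.
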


Penrose \cite{penrose} used this theorem to prove Poisson approximation
results for subgraph counts of random geometric graphs; one can follow
this approach essentially without change to prove the following
result, which holds in the entire sparse regime.

\begin{thm}\label{Poisson-bd}
With definitions as above, 
$$d_{TV}\big(S_{n,k},Poi(\E[S_{n,k}])\big)\le c_{k,d,f}\big[nr_n^d\big],$$
for a constant $c_{d,k,f}$ depending only on $d$, $k$, and $\|f\|_\infty$.
\end{thm}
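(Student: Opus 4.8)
The plan is to apply Theorem~\ref{Poi-approx} directly to the collection of indicator random variables $(\xi_{\bf i})$ defining $S_{n,k}$, where ${\bf i}$ ranges over increasing $k$-tuples from $[n]$ and $\xi_{\bf i}=h_{r_n}(X_{i_1},\ldots,X_{i_k})$ is the indicator that $\{X_{i_1},\ldots,X_{i_k}\}$ form an empty $(k-1)$-simplex with respect to $r_n$. First I would set up the dependency graph: declare ${\bf i}\sim{\bf j}$ whenever $\{i_1,\ldots,i_k\}\cap\{j_1,\ldots,j_k\}\neq\emptyset$. This is a genuine dependency graph because $\xi_{\bf i}$ is a function of $(X_{i_1},\ldots,X_{i_k})$ alone and the $X_i$ are i.i.d., so disjoint index sets give independent indicators. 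With $\lambda=\E[S_{n,k}]$ and $W=S_{n,k}$, Theorem~\ref{Poi-approx} gives
$$d_{TV}\big(S_{n,k},Poi(\lambda)\big)\le\min(3,\lambda^{-1})\big(b_1+b_2\big),\qquad b_1:=\sum_{{\bf i}}\sum_{\substack{{\bf j}\sim{\bf i}\\{\bf j}\neq{\bf i}}}p_{{\bf i}{\bf j}},\quad b_2:=\sum_{{\bf i}}\sum_{{\bf j}\sim{\bf i}}p_{\bf i}p_{\bf j},$$
and the whole task reduces to estimating $b_1$ and $b_2$ and comparing with $\lambda$.

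Next I would estimate the individual probabilities. For a fixed ${\bf i}$, the argument underlying Lemma~\ref{exp-order} shows $p_{\bf i}=\E[\xi_{\bf i}]=\Theta(r_n^{d(k-1)})$ (the $k$ points must lie in a ball of radius $2r_n$, contributing $(k-1)$ factors of $r_n^d$, times a bounded integral of $h_1$). Hence $\lambda=\E[S_{n,k}]=\Theta(n^k r_n^{d(k-1)})$, again by Lemma~\ref{exp-order}, and in the Poisson regime $n^k r_n^{d(k-1)}\to\alpha$ we have $\lambda=\Theta(1)$, so $\min(3,\lambda^{-1})=\Theta(1)$ and can be absorbed into the constant. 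For $b_2$: the number of pairs $({\bf i},{\bf j})$ with ${\bf j}\sim{\bf i}$ is $\binom{n}{k}\cdot\big[\binom{n}{k}-\binom{n-k}{k}\big]=\Theta(n^{2k-1})$, and each term is $p_{\bf i}p_{\bf j}=\Theta(r_n^{2d(k-1)})$, so $b_2=\Theta(n^{2k-1}r_n^{2d(k-1)})=\Theta(\lambda^2\cdot n^{-1})=\Theta(n^{-1})$ in the Poisson regime. Since in the sparse regime $nr_n^d\to0$ forces $n^{-1}\le nr_n^d$ only when $r_n^d\ge n^{-2}$; to be safe one writes $b_2\lesssim n^{2k-1}r_n^{2d(k-1)}$ and, using $n^k r_n^{d(k-1)}\lesssim 1$, bounds $b_2\lesssim n^{-1}\lesssim nr_n^d$ provided $r_n\gtrsim n^{-2/d}$, which holds in the regime of interest; more cleanly, $b_2\lesssim\lambda\cdot n^{k-1}r_n^{d(k-1)}\lesssim\lambda\cdot(n^k r_n^{d(k-1)})/n\lesssim 1/n\lesssim nr_n^d$. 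The point is that $b_2$ is an order $n^{-1}$ (hence $O(nr_n^d)$) quantity and requires only bookkeeping.

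The main obstacle is $b_1$, the mixed-moment term, which is where the geometry enters. Here ${\bf i}$ and ${\bf j}$ share $s$ indices for some $1\le s\le k-1$; I would stratify the sum over $s$. For each $s$, the number of such pairs is $\Theta(n^{2k-s})$, and the key estimate is $p_{{\bf i}{\bf j}}=\E[\xi_{\bf i}\xi_{\bf j}]\lesssim r_n^{d(2k-s-1)}$: the union of the two point sets has $2k-s$ vertices, all forced to lie within distance $O(r_n)$ of the shared points (connectivity of each empty simplex chains them together since the shared points lie in a common ball), giving $2k-s-1$ free $r_n^d$ factors times a bounded integral — this is exactly the sort of change-of-variables/dominated-convergence argument carried out in Lemma~\ref{exp-order} and in Penrose~\cite{penrose}, and I would cite that rather than redo it. Summing, the $s$-th stratum contributes $\lesssim n^{2k-s}r_n^{d(2k-s-1)}=(n^k r_n^{d(k-1)})^2\cdot n^{-s}(r_n^d)^{1-s}\cdot (\text{correction})$; more transparently, it equals $\lambda\cdot\Theta(n^{k-s}r_n^{d(k-s)})=\lambda\cdot\Theta((n r_n^d)^{?})$ — one checks the worst case is $s=k-1$, giving $\lesssim n^{k+1}r_n^{dk}=\lambda\cdot\Theta(nr_n^d)$. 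Thus $b_1\lesssim nr_n^d$ as well. Combining, $d_{TV}(S_{n,k},Poi(\lambda))\le c_{k,d,f}(b_1+b_2)\le c_{k,d,f}\,nr_n^d$, which is the claim. The only delicate point is verifying that in each overlap stratum the shared vertices genuinely chain the two empty-simplex conditions together so that one loses only a single bounded integral and picks up the full $2k-s-1$ powers of $r_n^d$; this geometric rigidity is inherited from the "empty simplex implies all points in a common small ball" observation already used repeatedly above.
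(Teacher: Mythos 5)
Your proposal takes essentially the same route as the paper's (commented-out) proof: identify the dependency graph by index-set intersection, use the observation that all $k$ points of an empty simplex must lie in a common ball of radius $2r_n$ to get $p_{\bf i}\lesssim r_n^{d(k-1)}$ and $p_{\bf i\bf j}\lesssim r_n^{d(2k-s-1)}$ when the overlap has size $s$, stratify $b_1$ by $s$, count pairs, and apply Theorem~\ref{Poi-approx}. Your treatment of $b_1$ is exactly right: after dividing by $\lambda\asymp n^kr_n^{d(k-1)}$ the $s$-stratum contributes $(nr_n^d)^{k-s}$, which is maximized at $s=k-1$ and gives the $nr_n^d$ rate.

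The one place you make life harder than necessary is $b_2$. You assert $\min(3,\lambda^{-1})=\Theta(1)$ early on, which is only valid in the Poisson regime $\lambda\to\alpha$, and then you end up needing the side conditions $\lambda\lesssim 1$ and $r_n\gtrsim n^{-2/d}$ to push $b_2$ under $nr_n^d$. None of that is needed. Keep the $\min(3,\lambda^{-1})$ factor, bound it by $\lambda^{-1}$, and compute
\[
\lambda^{-1} b_2 \;\lesssim\; \lambda^{-1}\binom{n}{k}\,n^{k-1}\,r_n^{2d(k-1)} \;\asymp\; n^{k-1}r_n^{d(k-1)} \;=\; (nr_n^d)^{k-1}\;\le\;nr_n^d,
\]
the last step using $k\ge2$ and $nr_n^d<1$. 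This is how the paper does it, and it makes the bound hold throughout the sparse regime $nr_n^d\to0$, which is what the theorem actually claims (the text explicitly says ``holds in the entire sparse regime''), rather than only in the Poisson window. So: same approach, correct where it matters, but your $b_2$ bookkeeping quietly narrows the regime and introduces hypotheses you do not need.
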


\begin{cor}\label{component-Poisson}
If $n^kr_n^{d(k-1)}\to\alpha\in(0,\infty)$ as $n\to\infty$, then 
$$d_{TV}\big(\s_{n,k},Poi(\E[\s_{n,k}])\big)\le \tilde{c}_{d,k,f}\alpha(nr_n^d).$$
\end{cor}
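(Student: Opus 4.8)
The plan is to deduce Corollary~\ref{component-Poisson} from Theorem~\ref{Poisson-bd} together with the triangle inequality for total variation distance. The key point is that, in the regime $n^kr_n^{d(k-1)}\to\alpha\in(0,\infty)$, the random variables $\s_{n,k}$ and $S_{n,k}$ are close: indeed, $0\le S_{n,k}-\s_{n,k}$ is the number of empty $(k-1)$-simplices that fail to be isolated, i.e.\ that have at least one vertex joined by an edge to a point outside the simplex. First I would estimate $\E[S_{n,k}-\s_{n,k}]$. Arguing exactly as in the (suppressed) proof of Lemma~\ref{exp-order} and its corollary, one has
$$
\E[S_{n,k}]-\E[\s_{n,k}]\le \binom{n}{k}\Big(1-(1-\|f\|_\infty\theta_d(4r_n)^d)^{n-k}\Big)\E\big[\xi_{(1,\ldots,k)}\big]\lesssim c_{d,k,f}\,(nr_n^d)\,\E[S_{n,k}],
$$
since $1-(1-cr_n^d)^{n-k}\lesssim nr_n^d$ and $\E[S_{n,k}]\simeq \mu/k!$ is bounded (using $n^kr_n^{d(k-1)}\to\alpha$, so that $\E[S_{n,k}]=\Theta(n^kr_n^{d(k-1)})=\Theta(\alpha)$).

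Next I would use the elementary fact that if $X\le Y$ are integer-valued and $Y-X\ge 0$, then $d_{TV}(X,Y)\le \P[X\neq Y]\le \E[Y-X]$, applied with $X=\s_{n,k}$, $Y=S_{n,k}$. This gives
$$
d_{TV}\big(\s_{n,k},S_{n,k}\big)\le \E[S_{n,k}-\s_{n,k}]\lesssim c_{d,k,f}\,\alpha\,(nr_n^d).
$$
Then I would also need to compare $Poi(\E[\s_{n,k}])$ with $Poi(\E[S_{n,k}])$: the standard bound $d_{TV}(Poi(\lambda),Poi(\mu))\le |\lambda-\mu|$ together with the displayed estimate on $\E[S_{n,k}]-\E[\s_{n,k}]$ yields $d_{TV}(Poi(\E[\s_{n,k}]),Poi(\E[S_{n,k}]))\lesssim c_{d,k,f}\,\alpha\,(nr_n^d)$ as well. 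Finally, assembling these three pieces via the triangle inequality,
$$
d_{TV}\big(\s_{n,k},Poi(\E[\s_{n,k}])\big)\le d_{TV}\big(\s_{n,k},S_{n,k}\big)+d_{TV}\big(S_{n,k},Poi(\E[S_{n,k}])\big)+d_{TV}\big(Poi(\E[S_{n,k}]),Poi(\E[\s_{n,k}])\big),
$$
and Theorem~\ref{Poisson-bd} bounds the middle term by $c_{k,d,f}(nr_n^d)$, while the outer two are each $\lesssim \alpha(nr_n^d)$; collecting constants gives the claimed bound $\tilde c_{d,k,f}\,\alpha(nr_n^d)$.

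The main obstacle, such as it is, is the bookkeeping in the first step: verifying that $\E[S_{n,k}-\s_{n,k}]\lesssim \alpha(nr_n^d)$ with a constant depending only on $d,k,\|f\|_\infty$, which requires redoing the change-of-variables/union-bound estimate from the proof of Lemma~\ref{exp-order} but tracking the extra factor coming from the event that some simplex vertex has an outside neighbour. Everything else is the two soft lemmas about total variation distance ($d_{TV}(X,Y)\le\P[X\neq Y]$ for a monotone coupling, and $d_{TV}(Poi(\lambda),Poi(\mu))\le|\lambda-\mu|$) plus the triangle inequality, so no genuinely new idea is needed beyond what already appears in the treatment of the Erd\H{o}s--R\'enyi case and in Penrose's book.
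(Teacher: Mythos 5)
Your proposal is correct and follows essentially the same route as the paper: bound $\E[S_{n,k}-\s_{n,k}]$ by $O(n^{k+1}r_n^{dk})=O(\alpha\,nr_n^d)$, use $d_{TV}(\s_{n,k},S_{n,k})\le\P[\s_{n,k}\neq S_{n,k}]\le\E[S_{n,k}-\s_{n,k}]$, compare the two Poissons via $d_{TV}(\mathrm{Poi}(\lambda),\mathrm{Poi}(\mu))\le|\lambda-\mu|$, and conclude by the triangle inequality with Theorem~\ref{Poisson-bd}. The only cosmetic difference is in how you bound the expected number of non-isolated empty simplices (a conditional union bound over the $n-k$ outside points, versus the paper's count of connected $(k+1)$-point subsets), which gives the same order.
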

That is, in the regime of part \ref{CC_clt_poisson} of the theorem, 
the lower bound for $\beta_k$ given in \eqref{bounds} is approximately Poisson.
 
\begin{proof}
Note that $S_{n,k}-\s_{n,k}$ is the number of empty $( k-1)$-simplices among
$\{X_,\ldots,X_n\}$ which are not isolated, and is thus bounded above
by the number of connected subsets of $\{X_,\ldots,X_n\}$ with $k+1$
points, $k$ of which form an empty $k$-simplex.  The expected
number of such sets is 
bounded by $$\binom{n}{k+1}k\|f\|_\infty^{k+1}\theta_d^{k+1}(2r_n)^{d(k-1)}(4r_n)^d
\simeq\left(\frac{k\|f\|_\infty^{k+1}\theta_d^{k+1}2^{d(k+1)}}{(k+1)!}\right)
n^{k+1}r_n^{dk},$$ so that
\begin{equation*}\begin{split}
d_{TV}(S_{n,k},\s_{n,k})&=\big|\P[S_{n,k}\in A]-\P[\s_{n,k}\in A]\big|\\&=
\big|\P[S_{n,k}\in A,S_{n,k}\neq\s_{n,k}]-\P[\s_{n,k}\in A,S_{n,k}\neq\s_{n,k}]
\big|\\&\le c_{d,k,f}n^{k+1}r_n^{dk}\\&\le\tilde{c}_{d,k,f}\alpha nr_n^d.
\end{split}\end{equation*}
Moreover, it is easy to see in general that if $Y_\alpha$ and $Y_\beta$ have Poisson
distributions with means $\alpha$ and $\beta$, respectively, then 
$d_{TV}(Y_\alpha,Y_\beta)\le |\alpha-\beta|$, and so 
$$d_{TV}(Poi(\E[S_{n,k}]),Poi(\E[\s_{n,k}]))\le c_{d,k,f}\alpha nr_n^d$$
as well.

\end{proof}

\medskip

 The
following result, proved below using Theorem \ref{Poi-approx}, 
holds throughout the sparse regime.

\begin{thm}\label{upper-Poisson}
There is a constant $c_{d,k,f}$ depending on $d$, $k$, and $f$ only,
so that with $S_{n,k},Y_{n,k},Z_{n,k}$ as above, 
$$d_{TV}(S_{n,k}+Y_{n,k}+Z_{n,k}, Poi(\E[\s_{n,k}]))\le c_{d,k,f}nr_n^d.$$
\end{thm}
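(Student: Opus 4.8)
The plan is to apply the Arratia--Goldstein--Gordon Poisson approximation theorem (Theorem~\ref{Poi-approx}) to the combined count $W:=S_{n,k}+Y_{n,k}+Z_{n,k}$, viewed as a sum of Bernoulli indicators indexed by an appropriate index set $I$, and to show that the error terms are $O(nr_n^d)$ uniformly in the sparse regime $nr_n^d\to 0$. The key point is that $W$ already has the right expectation: by Lemma~\ref{exp-upper} and the Corollary following it, $\E[W]=\E[S_{n,k}+Y_{n,k}+Z_{n,k}]\simeq\E[\s_{n,k}]$, and moreover $d_{TV}(Poi(\E[W]),Poi(\E[\s_{n,k}]))\le|\E[W]-\E[\s_{n,k}]|$, which by the order computations in Lemmas~\ref{exp-order} and~\ref{exp-upper} is $O(n^{k+1}r_n^{dk})=O(nr_n^d\cdot n^k r_n^{d(k-1)})$; so it suffices to bound $d_{TV}(W,Poi(\E[W]))$.

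First I would set up the index set. Write $W=\sum_{\mathbf{i}\in I}\xi_{\mathbf{i}}$ where $I$ is the disjoint union of three families: the $k$-subsets of $[n]$ carrying the empty-simplex indicator (for $S_{n,k}$), and the configurations of $k+1$ vertices carrying the "simplex plus two extra edges" resp.\ "simplex plus extra path of length two" indicators (for $Y_{n,k}$ and $Z_{n,k}$), each decorated by the combinatorial data recording which vertices receive the extra edges. Define $\mathbf{i}\sim\mathbf{j}$ whenever the underlying vertex sets intersect; by the i.i.d.\ structure of the points this is a valid dependency graph. The geometric constraint is the crucial input: for any of these indicators to be nonzero, all the involved points must lie in a ball of radius $O(r_n)$ around the left-most one, so each factor beyond the first contributes a factor $[(2r_n)^d\theta_d\|f\|_\infty]$ to $p_{\mathbf{i}}$ (exactly as in \eqref{pi}), and analogous bounds hold for $p_{\mathbf{i}\mathbf{j}}$ when $|V(\mathbf{i})\cap V(\mathbf{j})|=\ell$ (as in \eqref{pij}), the power of $r_n$ dropping by one for each shared vertex.

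Then I would estimate the two error sums $b_1:=\sum_{\mathbf{i}}\sum_{\mathbf{j}\sim\mathbf{i},\,\mathbf{j}\neq\mathbf{i}}p_{\mathbf{i}\mathbf{j}}$ and $b_2:=\sum_{\mathbf{i}}\sum_{\mathbf{j}\sim\mathbf{i}}p_{\mathbf{i}}p_{\mathbf{j}}$ exactly as Penrose does for subgraph counts and as in the bracketed computation preceding Corollary~\ref{component-Poisson}. Counting, for a given $\mathbf{i}$ on $m\in\{k,k+1\}$ vertices, the number of $\mathbf{j}$ on $m'$ vertices sharing $\ell\ge 1$ of them is $\Theta(n^{m'-\ell})$; multiplying by $\min(3,\E[W]^{-1})\asymp\E[W]^{-1}\asymp n^{-k}r_n^{-d(k-1)}$ and by the $r_n$-powers from the probability bounds, every term in $b_1$ and $b_2$ is seen to be $O(nr_n^d)$, with the dominant contribution coming from the terms with a single shared vertex among the largest configurations; the cross-terms between the three families (e.g.\ an $S$-index adjacent to a $Y$-index) are handled identically. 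Summing the $O(1)$-many term types gives $d_{TV}(W,Poi(\E[W]))\le c_{d,k,f}nr_n^d$, and combining with the expectation-comparison bound above yields the theorem. The main obstacle is purely bookkeeping: correctly enumerating the decorated configurations in $Y_{n,k}$ and $Z_{n,k}$ and their overlaps so that one is certain no term of order larger than $nr_n^d$ is produced — but since these are exactly the geometric subgraph-count estimates of \cite{penrose} with the empty-simplex indicator $h_{r_n}$ substituted for a fixed subgraph indicator, no genuinely new difficulty arises.
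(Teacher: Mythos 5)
Your proposal follows the same route as the paper: decompose $W=S_{n,k}+Y_{n,k}+Z_{n,k}$ into a sum of Bernoulli indicators over a decorated index set (the $\xi_{\bf i}$, $\gamma^{p,q}_{\bf j}$, $\eta^p_{\bf j}$ of the paper), take the dependency graph to be vertex-set intersection, bound $p_{\bf i}$ and $p_{\bf ij}$ by powers of $(2r_n)^d\theta_d\|f\|_\infty$ via the containment-in-a-ball constraint, count overlapping index pairs by $\ell=|V({\bf i})\cap V({\bf j})|$, feed everything into Theorem~\ref{Poi-approx}, and finally replace $\E[W]$ by $\E[\s_{n,k}]$ at the end. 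This is essentially identical to the paper's proof, down to the observation that the replacement step costs only a change of constant.

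One small caution, which the paper shares: you write $\min(3,\E[W]^{-1})\asymp\E[W]^{-1}$, but this only holds when $\E[W]$ is bounded away from zero; when $n^kr_n^{d(k-1)}\to0$ the factor is simply $3$, and one must check directly (as both you and the paper implicitly do) that each $b_1,b_2$ term is already $O(n^{k+1}r_n^{dk})=O(nr_n^d)\cdot O(n^kr_n^{d(k-1)})$ so that the bound still closes. Likewise, the Poisson-mean replacement $d_{TV}(Poi(\E[W]),Poi(\E[\s_{n,k}]))\le|\E[W]-\E[\s_{n,k}]|=O(n^{k+1}r_n^{dk})$ gives $O(nr_n^d)$ only when $n^kr_n^{d(k-1)}$ stays bounded; when it tends to infinity one should invoke the sharper bound $d_{TV}(Poi(\lambda),Poi(\mu))\lesssim|\lambda-\mu|/\sqrt{\lambda\wedge\mu}$. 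Since the paper glosses over this as well, it is not a gap you introduced, but it is worth flagging if you want a fully watertight statement across the entire sparse regime.
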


The inequalities in \eqref{bounds} together with Corollary 
\ref{component-Poisson} and Theorem \ref{upper-Poisson} yield part 
\ref{CC_clt_poisson} almost immediately.

\begin{proof}[Proof of part \ref{CC_clt_poisson} of Theorem \ref{CC_clt}]
By the left-hand inequality in \eqref{bounds} and Corollary 
\ref{component-Poisson},
$$\P[\beta_{k-1}\le m]\le\P[\s_{n,k}\le m]\le \P[Y\le m]+c_{d,k,f}nr_n^d,$$
 where $Y$ is a Poisson random variable with mean $\E[\s_{n,k}]$.

By the right-hand inequality in \eqref{bounds} and Theorem 
\ref{upper-Poisson}, 
$$\P[\beta_{k-1}\le m]\ge\P[S_{n,k}+Y_{n,k}+Z_{n,k}\le m]\ge \P[Y\le m]-
c_{d,k,f}nr_n^d.$$

 As in the previous proof, $Y$ can be replaced by a Poisson random variable
with mean $\E[\beta_k(\mathcal{C}(X_1,\ldots,X_n))]$ with only a change of 
constant in the error term.
\end{proof}

\begin{proof}[Proof of Theorem \ref{upper-Poisson}]
For notational convenience, let $W_{n,k}:=S_{n,k}+Y_{n,k}+Z_{n,k}$.
  For $1\le
  p<q\le k-1$, let $g^{p,q}_{r_n}(x_1,\ldots,x_{k+1})$ be the
  indicator that $\{x_1,\ldots,x_{k-1}\}$ form a simplex (where a
  complex is built as described on $x_1,\ldots,x_{k+1}$ with
  threshhold radius $r_n$) and that $\{x_p,x_k\}$ and
  $\{x_q,x_{k+1}\}$ are edges.  Let $k^{p}_{r_n}(x_1,\ldots,x_{k+1})$
  be the indicator that $\{x_1,\ldots,x_{k-1}\}$ form a simplex and
  that $\{x_p,x_k\}$ and $\{x_k,x_{k+1}\}$ are edges.  For ${\bf j}=
  (j_1,\ldots,j_{k+1})$, let $\gamma^{p,q}_{\bf
    j}=g^{p,q}_{r_n}(X_{j_1}, \ldots,X_{j_{k+1}})$ and let
  $\eta^p_{\bf j}=k^p_{r_n}(X_{j_1},\ldots,X_{j_{k+1}}).$ Then

  \begin{align*}
W_{n,k}&=\sum_{1\le i_1<\cdots<i_k\le n}\xi_{\bf i}
   +\sum_{\substack{
    1\le j_1<\cdots<j_{k-1}\le
    n\\j_k,j_{k+1}\notin\{j_1,\ldots,j_{k-1}\}\\j_k\neq j_{k+1}}}\sum_{1\le p<q\le
k-1}\gamma^{p,q}_{\bf j}\\
& \qquad\qquad+\sum_{\substack{
  1\le j_1<\cdots<j_{k-1}\le
  n\\j_k,j_{k+1}\notin\{j_1,\ldots,j_{k-1}\}\\j_k\neq j_{k+1}}}\sum_{1\le p\le
k-1}\eta^{p}_{\bf j}.
\end{align*}

 The proof that $W_{n,k}$ has an approximate Poisson distribution
proceeds along the same lines as the proof given by Penrose for subgraph
counts.  For the
Bernoulli random variables in the sum above, one can take a dependency
graph to be ${\bf i}\sim{\bf j}$ if ${\bf i}\cap{\bf j}\neq\emptyset$.
(Abusing notation, ${\bf i}$ is also used here to denote the set of
indices from the multiindex ${\bf i}$.)  Note that it is not important
that ${\bf i}$ and ${\bf j}$ be the same size.

Now, $\E[\xi_{\bf i}]\le [(2r_n)^d\theta_d\|f\|_\infty]^{k-1}$
and if $|{\bf i}\cap{\bf i'}|=\ell$, 
then $$\E[\xi_{\bf i}\xi_{\bf i'}]\le [(2r_n)^d\theta_d\|f\|_\infty]^{2k-\ell-1},$$ 
 since if set of $k$ points forms a simplex, they must
all be in the ball of radius $2r_n$ about the first point.  
Given ${\bf i}=(i_1,\ldots,i_k)$, the number of ${\bf i'}=(i_1',\ldots,
i_k')$ with ${\bf i}\sim
{\bf i'}$ (including ${\bf i}$ itself) is 
\begin{equation*}
\binom{n}{k}-\binom{n-k}{k}=\frac{k^2n^{k-1}}{k!}+O\left(n^{k-2}\right);
\end{equation*} 
for ${\bf i}$ as above, the number of ${\bf i}=(i_1',\ldots,i_k')$ with 
$\big|{\bf i}\cap{\bf i'}\big|=\ell$ is
 \begin{equation*}
\binom{k}{\ell}\binom{n-k}{k-\ell}=\binom{k}{\ell}\frac{1}{(k-\ell)!}n^{k-\ell}+
O\left(n^{k-\ell-1}\right).
\end{equation*}
This means that the contribution to the error term (without the $\min(3,
\lambda^{-1})$ factor in front) from Theorem \ref{Poi-approx} of the form
$p_{\bf i}p_{\bf i'}$ for ${\bf i}\sim{\bf i'}$ is, to top-order in $n$,

$$\frac{kn^{2k-1}}{k!
(k-1)!}\left[(2r_n)^d\theta_d\|f\|_\infty\right]^{2k-2},$$
and the contribution from terms of the form $p_{\bf ii'}$ is (to top order)
$$\binom{n}{k}\sum_{\ell=1}^{k-1}\binom{k}{\ell}\frac{1}{(k-\ell)!}n^{k-\ell}
[(2r_n)^d\theta_d\|f\|_\infty]^{2k-\ell-1}\lesssim n^{k+1}r_n^{dk}.$$

Similar to above, $\E[\gamma^{p,q}_{\bf j}]\le
2^d\left[(2r_n)^d\theta_d\|f\|_\infty\right]^k$ and if $|{\bf j}\cap{\bf
    j'}|=\ell$, then $$\E[\gamma_{\bf j}^{p,q}\gamma_{\bf
    j'}^{p',q'}]\le
  2^{3d}\left[(2r_n)^d\theta_d\|f\|_\infty\right]^{2k+1-\ell}.$$
Given ${\bf j}=(j_1,\ldots,j_{k+1})$, the number of ${\bf j'}=
(j_1',\ldots,j_{k+1}')$ with ${\bf j}\sim{\bf j'}$ is 
$$\frac{(k+1)^2n^k}{(k+1)!}+O(n^{k-1})$$
and the number of ${\bf j'}$ with $|{\bf j}\cap{\bf j'}|=\ell$ is
$$\binom{k+1}{\ell}\frac{n^{k+1-\ell}}{(k+1-\ell)!}+O(n^{k-\ell}).$$
This yields a top-order contribution to the error from Theorem 
\ref{Poi-approx} from the $\E[\gamma_{\bf j}]\E[\gamma_{\bf j'}]$ and 
$\E[\gamma_{\bf j}\gamma_{\bf j'}]$ terms  of order
\begin{equation*}\begin{split}
\frac{(k+1)^2n^{2k+1}}{[(k+1)!]^2}&\binom{k-1}{2}^22^{2d}\left[
(2r_n)^d\theta_d\|f\|_\infty\right]^{2k}\\&+\binom{n}{k+1}\sum_{\ell=1}^{k+1}
\binom{k-1}{2}^2\binom{k+1}{\ell}\frac{n^{k+1-\ell}}{(k+1-\ell)!}
2^{3d}\left[(2r_n)^d\theta_d\|f\|_\infty\right]^{2k+1-\ell}\\
&\lesssim n^{k+1}r_n^{dk}.
\end{split}\end{equation*}

    In the same way,
$\E[\eta_{\bf j}^p]\le
2^d\left[(2r_n)^d\theta_d\|f\|_\infty\right]^k,$
and if $|{\bf j}\cap{\bf
    j'}|=\ell$, then $$\E[\eta_{\bf j}^{p}\eta_{\bf
    j'}^{p'}]\le
  2^{3d}\left[(2r_n)^d\theta_d\|f\|_\infty\right]^{2k+1-\ell},$$
thus the contribution from the terms of the form $\E[\eta_{\bf j}]\E[\eta_{
\bf j'}]$ and of the form $\E[\eta_{\bf j}\eta_{\bf j'}]$ is of the same
order as the contribution above from the corresponding $\gamma$ terms.

The cross terms are essentially the same: if $|{\bf i} \cap{\bf j}|=\ell$,
then 
\begin{equation*}\begin{split}
\E[\xi_{\bf i}\gamma_{\bf j}^{p,q}]\le2^{2d}\left[(2r_n)^d\theta_d\|f\|_\infty
\right]^{2k-\ell}\qquad&\qquad\E[\xi_{\bf i}\eta_{\bf j}^{p}]\le2^{3d}
\left[(2r_n)^d\theta_d\|f\|_\infty\right]^{2k-\ell}\\
\E[\gamma^{p,q}_{\bf i}\eta_{\bf j}^{r}]\le2^{4d}&\left[(2r_n)^d\theta_d
\|f\|_\infty\right]^{2k+1-\ell}.
\end{split}\end{equation*}

The number of ${\bf j}=(j_1,\ldots,j_{k+1})$ with ${\bf i}\sim{\bf j}$
is
$$\binom{n}{k+1}-\binom{n-k}{k+1}=\frac{n^k}{(k-1)!}+O(n^{k-1}).$$
and the number of such ${\bf j}$ with $|{\bf i}\cap{\bf j}|=\ell$ is
$$\binom{k}{\ell}\binom{n-k}{k+1-\ell}=\binom{k}{\ell}\frac{n^{k+1-\ell}}{
(k+1-\ell)!}+O(n^{k-\ell}).$$

This yields a contribution from the $\xi$-$\gamma$ cross-terms of
\begin{equation*}\begin{split}
\frac{n^{2k}}{k!(k-1)!}\binom{k-1}{2}&2^d\left[(2r_n)^d\theta_d\|f\|_\infty
\right]^{2k-1}\\&+\binom{n}{k}\sum_{\ell=0}^k\binom{k-1}{2}\binom{k}{\ell}
\frac{n^{k+1-\ell}}{(k+1-\ell)!}2^{2d}\left[(2r_n)^d\theta_d\|f\|_\infty
\right]^{2k-\ell}\\&\lesssim n^{k+1}r_n^{dk}.
\end{split}\end{equation*}
The contribution from the $\xi$-$\eta$ cross terms is the same up to
constants depending only on $k$ and $d$, and the contribution from the 
$\gamma$-$\eta$ cross terms is 
\begin{eqnarray*}
 & &\frac{(k+1)^2n^{2k+1}}{[(k+1)!]^2}(k-1)\binom{k-1}{2}2^{2d}\left[(2r_n)^d
\theta_d\|f\|_\infty\right]^{2k}\\
&+&\binom{n}{k+1}\sum_{\ell=0}^{k+1}
(k-1)\binom{k-1}{2}\binom{k+1}{\ell}\frac{n^{k+1-\ell}}{(k+1-\ell)!}
2^{4d}\left[(2r_n)^d\theta_d
\|f\|_\infty\right]^{2k+1-\ell}\\
&\lesssim & n^{k+1}r_n^{dk}.
\end{eqnarray*}


Collecting terms and using that $\lambda=
\E[W_{n,k}]\simeq n^kr_n^{d(k-1)}
\left(\frac{\mu}{k!}\right)$, Theorem \ref{Poi-approx} yields
$$d_{TV}(W,Poi(\lambda))\le c_{d,k,f}nr_n^d.$$
Again, one can replace $\lambda$ with $\E[\s_{n,k}]$ with only a loss in the 
value of the constant $c_{d,k,f}$.

\end{proof}

\bigskip

The remainder of the section is devoted to the proof of part \ref{CC_clt_normal}
of Theorem \ref{CC_clt}.   A central limit theorem for the
recentered, renormalized upper bound of $\beta_k$ given in  
\eqref{bounds} follows 
immediately from Theorem \ref{upper-Poisson} in this
range of $r_n$, by the classical
result that a Poisson random variable with mean
tending to infinity tends to a Gaussian random variable when recentered
and renormalized.
\begin{thm}\label{upper-normal}
If $nr_n^d\xrightarrow{n\to\infty}0$ and $n^kr_n^{d(k-1)}\xrightarrow\infty,$
then 
$$\frac{S_{n,k}+Y_{n,k}+Z_{n,k}-\E[\s_{n,k}]}{\sqrt{\E[\s_{n,k}]}}\Longrightarrow
\mathcal{N}(0,1)$$
as $n$ tends to infinity.
\end{thm}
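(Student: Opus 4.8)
The plan is to reduce the statement to a well-known Poisson-to-normal convergence fact, using Theorem \ref{upper-Poisson} together with the asymptotics already recorded for $\E[\s_{n,k}]$. First I would set $W_{n,k}:=S_{n,k}+Y_{n,k}+Z_{n,k}$ and $\lambda_n:=\E[\s_{n,k}]$, and observe from Lemma \ref{exp-order} (applied with $A=\R^d$) that $\lambda_n\simeq n^kr_n^{d(k-1)}\cdot\frac{\mu}{k!}$, so that the hypothesis $n^kr_n^{d(k-1)}\to\infty$ gives $\lambda_n\to\infty$. By Theorem \ref{upper-Poisson}, $d_{TV}(W_{n,k},\mathrm{Poi}(\lambda_n))\le c_{d,k,f}\,nr_n^d\to0$, using the other hypothesis $nr_n^d\to0$. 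Since the total variation distance dominates the Kolmogorov (and Wasserstein-1) distance, we may replace $W_{n,k}$ by a genuine Poisson random variable $P_n\sim\mathrm{Poi}(\lambda_n)$ without affecting the limiting distribution of the recentered, renormalized variable.

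Next I would invoke the classical central limit theorem for Poisson random variables: if $P_n\sim\mathrm{Poi}(\lambda_n)$ with $\lambda_n\to\infty$, then $(P_n-\lambda_n)/\sqrt{\lambda_n}\Rightarrow\mathcal N(0,1)$. (This follows, e.g., from representing $P_n$ as a sum of $\lfloor\lambda_n\rfloor$ i.i.d. Poisson(1) variables plus a vanishing remainder and applying the ordinary CLT, or directly from the Lindeberg--Feller theorem, or from a Berry--Esseen bound for Poisson variables.) Combining this with the total variation estimate via the triangle inequality for, say, the Kolmogorov distance: for every $t\in\R$,
\begin{equation*}
\left|\,\P\!\left[\frac{W_{n,k}-\lambda_n}{\sqrt{\lambda_n}}\le t\right]-\Phi(t)\right|
\le d_{TV}(W_{n,k},P_n)+\left|\,\P\!\left[\frac{P_n-\lambda_n}{\sqrt{\lambda_n}}\le t\right]-\Phi(t)\right|,
\end{equation*}
and both terms on the right tend to $0$, the first by Theorem \ref{upper-Poisson} and the second by the Poisson CLT. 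This gives exactly $(W_{n,k}-\E[\s_{n,k}])/\sqrt{\E[\s_{n,k}]}\Rightarrow\mathcal N(0,1)$.

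Honestly, there is no real obstacle here: both ingredients are off the shelf once Theorem \ref{upper-Poisson} is in hand, and the argument is essentially the remark already made in the text preceding the statement. The only point demanding any care is the bookkeeping that ensures we are normalizing by $\sqrt{\E[\s_{n,k}]}$ rather than by $\sqrt{\var(W_{n,k})}$; this is harmless because for a Poisson-approximated variable the variance is itself $\lambda_n(1+o(1))$, and in any case the statement as phrased uses $\sqrt{\E[\s_{n,k}]}$ directly, so no replacement of the normalizing constant is needed. Thus the entire proof is the two-line triangle-inequality estimate above, citing Theorem \ref{upper-Poisson} and the elementary Poisson CLT.
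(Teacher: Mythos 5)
Your proposal is correct and matches the paper's approach exactly: the paper gives no separate proof, stating only that Theorem~\ref{upper-normal} ``follows immediately from Theorem~\ref{upper-Poisson} in this range of $r_n$, by the classical result that a Poisson random variable with mean tending to infinity tends to a Gaussian random variable when recentered and renormalized,'' which is precisely the two-step argument (total variation bound plus Poisson CLT, combined via the Kolmogorov distance) that you spell out. One small slip worth noting: total variation does not in general dominate the Wasserstein-$1$ distance for unbounded random variables, so the parenthetical remark to that effect is inaccurate; fortunately it plays no role, since the bound $\bigl|\P[X\le x]-\P[Y\le x]\bigr|\le d_{TV}(X,Y)$ for the Kolmogorov distance is all your triangle inequality actually uses.
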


Clearly the approach to the lower bound of \eqref{bounds} 
taken in the regime in which $n^kr_n^{d(k-1)}\to\alpha\in(0,\infty)$ 
also works in the case that 
$n^kr_n^{d(k-1)}$ tends to infinity but $n^{k+1}r_n^{dk}$ tends to
zero to show that $\s_{n,k}$ is approximately Gaussian in that regime as
well.  However, to deal with the regime
in which $r_n=o(n^{-1/d})$ but  $n^{k+1}r_n^{dk}$ is bounded away from zero,
a different argument is needed for the lower bound of \eqref{bounds}.  
Following Penrose, the approach taken
here is to consider the Poissonized version of the problem (the vertices
distributed as a Poisson process of intensity $nf(\cdot)$ instead of i.i.d.\ with 
density $f$), and then to recover the i.i.d.\ case.

Let $N_n$ be a Poisson random variable with mean $n$, and let $\p_n=
\{X_1,\ldots,X_{N_n}\},$ where $\{X_i\}_{i=1}^\infty$ is an i.i.d.\ sequence
of random points in $\R^d$ with density $f$.  Then $\p_n$ is a Poisson 
process with intensity $nf(\cdot)$, and one can define $S^P_{n,k}$ and 
$\s^P_{n,k}$ for the random points $\p_n$ analogously to the earlier 
definitions.  In what follows, assume that $k\ge 3$; that is, 
the empty $( k-1)$-simplices are at least empty triangles.  Empty 
1-simplices
are simply pairs of vertices which are not connected, and
different arguments are needed in that case.

In order to compute expectations for the expressions which arise in
the Poissonized case, the following results are
useful.

\begin{thm}[See \cite{penrose}]\label{one}
Let $\lambda>0$  and let $\p_\lambda$ be a Poisson process with intensity 
$\lambda f(\cdot)$.  Let 
$j\in\N$, and suppose that $h(\Y,\X)$ is a bounded measurable
function on pairs $(\Y,\X)$ with $\X$ a finite subset of $\R^d$ and 
$\Y\subseteq\X$, such that $h(\Y,\X)=0$ unless $|\Y|=j$.  Then
$$\E\left[\sum_{\Y\subseteq\p_\lambda}h(\Y,\p_\lambda)\right]=\frac{\lambda^j}{j!}
\E h(\X_j',\X_j'\cup\p_\lambda),$$
where $\X_j'$ is a set of $j$ i.i.d. points in $\R^d$ with density $f$, 
independent of $\p_\lambda$.
\end{thm}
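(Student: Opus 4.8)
The plan is to prove this by conditioning on the total number of points of $\p_\lambda$ and exploiting exchangeability of i.i.d.\ samples. Since $f$ is a probability density, $\p_\lambda$ has finite total intensity $\lambda$, so it may be realized as $\p_\lambda=\{X_1,\ldots,X_N\}$ where $N$ is Poisson with mean $\lambda$ and, conditionally on $\{N=n\}$, the points $X_1,\ldots,X_n$ are i.i.d.\ with density $f$, independent of $N$. First I would write, using this representation together with the boundedness of $h$ (which legitimizes interchanging the expectation with the a.s.\ finite sums, e.g.\ by Tonelli applied to $|h|$ and the a.s.\ finiteness of $\p_\lambda$),
$$\E\left[\sum_{\substack{\Y\subseteq\p_\lambda\\|\Y|=j}}h(\Y,\p_\lambda)\right]=\sum_{n\ge j}e^{-\lambda}\frac{\lambda^n}{n!}\,\E\left[\sum_{\substack{\Y\subseteq\{X_1,\ldots,X_n\}\\|\Y|=j}}h\big(\Y,\{X_1,\ldots,X_n\}\big)\right],$$
where the inner sum ranges over the $\binom{n}{j}$ subsets of size $j$.

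Next, by exchangeability of $X_1,\ldots,X_n$, every term of the inner sum has the same expectation, so the inner expectation equals $\binom{n}{j}\,\E\big[h(\{X_1,\ldots,X_j\},\{X_1,\ldots,X_n\})\big]$. Substituting $\binom{n}{j}/n!=1/\big(j!\,(n-j)!\big)$ and reindexing with $m=n-j$ turns the right-hand side into
$$\frac{\lambda^j}{j!}\sum_{m\ge 0}e^{-\lambda}\frac{\lambda^m}{m!}\,\E\Big[h\big(\{X_1,\ldots,X_j\},\,\{X_1,\ldots,X_j\}\cup\{X_{j+1},\ldots,X_{j+m}\}\big)\Big].$$
Now I would observe that $\{X_1,\ldots,X_j\}$ is a set $\X_j'$ of $j$ i.i.d.\ points with density $f$, independent of the remaining points $X_{j+1},X_{j+2},\ldots$, and that the family $\{X_{j+1},\ldots,X_{j+m}\}$ weighted by the Poisson$(\lambda)$ mass $e^{-\lambda}\lambda^m/m!$ on $m$ reconstitutes a Poisson process $\p_\lambda'$ of intensity $\lambda f(\cdot)$, independent of $\X_j'$. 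Hence the displayed sum is $\frac{\lambda^j}{j!}\,\E\big[h(\X_j',\X_j'\cup\p_\lambda')\big]$, and since $\p_\lambda'$ has the same law as $\p_\lambda$ and is independent of $\X_j'$, this equals $\frac{\lambda^j}{j!}\,\E\big[h(\X_j',\X_j'\cup\p_\lambda)\big]$, which is the claim.

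The only genuinely delicate points are bookkeeping ones, and I do not expect a serious obstacle. One is justifying the interchange of the expectation with the (a.s.\ finite) sums, which follows from boundedness of $h$ and finiteness of $\lambda$. The other is verifying that splitting $\{X_1,\ldots,X_N\}$ into its first $j$ points and ``the rest'' genuinely decomposes $\p_\lambda$ into an i.i.d.\ $j$-tuple and an independent Poisson process of the same intensity; here the key is simply that $e^{-\lambda}\lambda^{n}/n!\cdot\binom{n}{j}=\frac{\lambda^j}{j!}\cdot e^{-\lambda}\lambda^{m}/m!$ with $m=n-j$, so that after reindexing the weight on the ``remaining'' points is exactly the Poisson$(\lambda)$ weight — the superposition/splitting property of Poisson processes in disguise.
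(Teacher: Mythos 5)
The paper itself does not prove Theorem \ref{one}; it cites it directly from Penrose's monograph. Your proof is correct and is exactly the standard argument (the one in Penrose, and the classical route to the multivariate Mecke/Campbell formula): realize $\p_\lambda$ as $\{X_1,\ldots,X_N\}$ with $N\sim\mathrm{Poisson}(\lambda)$ and $X_i$ i.i.d.\ with density $f$, condition on $N=n$, use exchangeability to replace the sum over $j$-subsets by $\binom{n}{j}$ times a single term, and then exploit the algebraic identity $e^{-\lambda}\frac{\lambda^n}{n!}\binom{n}{j}=\frac{\lambda^j}{j!}\cdot e^{-\lambda}\frac{\lambda^{n-j}}{(n-j)!}$ to peel off $j$ i.i.d.\ points and reconstitute an independent Poisson process from the remainder. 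One small sharpening: justifying the interchange of expectation and sum is slightly more than ``a.s.\ finiteness of $\p_\lambda$'' plus boundedness of $h$ — what you actually use is that $\big|\sum_{\Y\subseteq\p_\lambda}h(\Y,\p_\lambda)\big|\le\|h\|_\infty\binom{N}{j}$ and $\E\binom{N}{j}=\lambda^j/j!<\infty$, which makes the outer sum absolutely convergent and permits Fubini. With that noted, the argument is complete.
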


From this, one can prove the following.
\begin{thm}\label{product}
Let $\lambda>0$ and $k,j_1,\ldots,j_k\in\N$; define $j:=\sum_{i=1}^kj_i$.  
For $1\le i\le k$, suppose
$h_i(\Y,\X)$ is a bounded measurable function of pairs $(\Y,\X)$ of
finite subsets of $\R^d$ with $\Y\subseteq\X$, such that $h_i(\Y,\X)=0$
if $|\Y|\neq j_i$.  Then
$$\E\left[\sum_{\Y_1,\subseteq\p_\lambda}\cdots\sum_{\Y_k\subseteq\p_\lambda}\left(
\prod_{i=1}^kh_i(\Y_i)\right)\1_{\{\Y_i\cap\Y_j=\emptyset\,{\rm for }\,
i\neq j\}}\right]=\E\left[\prod_{i=1}^k\left(\frac{\lambda^{j_i}}{j_i!}\right)
h_i(\X_{j_i}',\X_j'\cup\p_n)\right],$$
\end{thm}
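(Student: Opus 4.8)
The plan is to deduce Theorem~\ref{product} from Theorem~\ref{one} by induction on $k$, stripping off one of the $k$ sums at each step. The case $k=1$ is exactly Theorem~\ref{one}, so assume the identity holds for any $k-1$ functions, and consider $h_1,\ldots,h_k$ with $|\Y|$-supports $j_1,\ldots,j_k$. For a finite $\X\subseteq\R^d$ and $\Y_k\subseteq\X$, put
$$H(\Y_k,\X):=\1\{|\Y_k|=j_k\}\,h_k(\Y_k,\X)\sum_{\substack{\Y_1,\ldots,\Y_{k-1}\subseteq\X\setminus\Y_k\\ \Y_i\cap\Y_{i'}=\emptyset\ (i\neq i')}}\ \prod_{i=1}^{k-1}h_i(\Y_i,\X).$$
This is a bounded measurable function of $(\Y_k,\X)$ that vanishes unless $|\Y_k|=j_k$, so Theorem~\ref{one} applies and gives $\E\big[\sum_{\Y_k\subseteq\p_\lambda}H(\Y_k,\p_\lambda)\big]=\tfrac{\lambda^{j_k}}{j_k!}\E\,H(\X_{j_k}',\X_{j_k}'\cup\p_\lambda)$, where $\X_{j_k}'$ is a set of $j_k$ i.i.d.\ points of density $f$, independent of $\p_\lambda$. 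Since, for $\Y_i,\Y_k\subseteq\p_\lambda$, the conditions $\Y_i\cap\Y_k=\emptyset$ and $\Y_i\subseteq\p_\lambda\setminus\Y_k$ are equivalent, the left-hand side of this identity is precisely the left-hand side of Theorem~\ref{product}.

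It remains to rewrite the right-hand side in the claimed form. Because $f$ has a density, a.s.\ $\X_{j_k}'\cap\p_\lambda=\emptyset$ and $|\X_{j_k}'|=j_k$, so $(\X_{j_k}'\cup\p_\lambda)\setminus\X_{j_k}'=\p_\lambda$ and
$$\E\,H(\X_{j_k}',\X_{j_k}'\cup\p_\lambda)=\E\Bigg[\sum_{\substack{\Y_1,\ldots,\Y_{k-1}\subseteq\p_\lambda\\ \text{pairwise disjoint}}}h_k(\X_{j_k}',\X_{j_k}'\cup\p_\lambda)\prod_{i=1}^{k-1}h_i(\Y_i,\X_{j_k}'\cup\p_\lambda)\Bigg].$$
I would then condition on $\X_{j_k}'$; conditionally, $\p_\lambda$ is still a Poisson process of intensity $\lambda f(\cdot)$. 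Here is the one delicate point, which I expect to be the main obstacle: the factor $h_k(\X_{j_k}',\X_{j_k}'\cup\p_\lambda)$ still depends on $\p_\lambda$, so it cannot be pulled out of the conditional expectation and must instead be absorbed into one of the remaining functions. Concretely, with $\X_{j_k}'$ frozen, set $\tilde g_1(\Y,\X):=h_1(\Y,\X\cup\X_{j_k}')\,h_k(\X_{j_k}',\X\cup\X_{j_k}')$ and $\tilde g_i(\Y,\X):=h_i(\Y,\X\cup\X_{j_k}')$ for $2\le i\le k-1$; each $\tilde g_i$ is bounded, measurable, and vanishes unless $|\Y|=j_i$, and the bracketed sum above equals $\sum_{\Y_1,\ldots,\Y_{k-1}\subseteq\p_\lambda,\ \text{disj}}\prod_{i=1}^{k-1}\tilde g_i(\Y_i,\p_\lambda)$.

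Applying the inductive hypothesis to this last sum, conditionally on $\X_{j_k}'$, yields $\E\big[\prod_{i=1}^{k-1}\tfrac{\lambda^{j_i}}{j_i!}\tilde g_i(\X_{j_i}',\X_{j_1}'\cup\cdots\cup\X_{j_{k-1}}'\cup\p_\lambda)\,\big|\,\X_{j_k}'\big]$ for fresh independent i.i.d.\ samples $\X_{j_i}'$ of size $j_i$; unwinding the definitions of the $\tilde g_i$ turns every second argument into $\X_j'\cup\p_\lambda$, where $\X_j':=\bigcup_{i=1}^k\X_{j_i}'$, and extracts the extra factor $h_k(\X_{j_k}',\X_j'\cup\p_\lambda)$ from $\tilde g_1$. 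Multiplying by the prefactor $\tfrac{\lambda^{j_k}}{j_k!}$ and taking the outer expectation over $\X_{j_k}'$ then produces exactly $\E\big[\prod_{i=1}^k\tfrac{\lambda^{j_i}}{j_i!}h_i(\X_{j_i}',\X_j'\cup\p_\lambda)\big]$, completing the induction. The only analytic inputs are that $\p_\lambda$ is a.s.\ finite and that distinct i.i.d.\ samples and the points of $\p_\lambda$ are a.s.\ pairwise distinct (both because $f$ has a density), plus routine measurability of the auxiliary functions; everything combinatorial is carried by Theorem~\ref{one}. One could instead quote the multivariate Mecke equation for Poisson processes directly, but the induction above keeps the argument within the framework of Theorem~\ref{one}.
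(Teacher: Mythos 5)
Your proof is correct, but it takes a genuinely different route from the paper's. The paper proves the identity in one shot (illustrated for $k=2$): it defines an aggregate function
\[
h(\Y,\X):=\sum_{\substack{\Y_1\subseteq\Y\\|\Y_1|=j_1}}h_1(\Y_1,\X)\,h_2(\Y\setminus\Y_1,\X),
\]
so that the $k$-fold disjoint sum collapses to a single sum $\sum_{\Y\subseteq\p_\lambda}h(\Y,\p_\lambda)$, applies Theorem~\ref{one} once, and then unfolds $\E\,h(\X_j',\X_j'\cup\p_\lambda)$ using exchangeability of the i.i.d.\ points (the $\binom{j}{j_1}$ equal terms exactly convert the $1/(j_1+j_2)!$ into $1/(j_1!j_2!)$). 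You instead induct on $k$, applying Theorem~\ref{one} once per step and conditioning on $\X_{j_k}'$; the key trick is absorbing the residual $\p_\lambda$-dependence of $h_k(\X_{j_k}',\X_{j_k}'\cup\p_\lambda)$ into the modified function $\tilde g_1$, which you correctly flagged as the delicate point and handled properly. Your argument is valid (the $\tilde g_i$ are bounded and measurable, the conditional Poisson process is still Poisson of intensity $\lambda f$, the sums are integrable since the number of disjoint tuples is polynomial in $|\p_\lambda|$ and Poisson has all moments, and the union of the fresh $\X_{j_i}'$ is distributionally the required $\X_j'$). The paper's aggregation is tighter and avoids the conditioning step, while your induction is more modular and stays visibly within Theorem~\ref{one}'s hypotheses at each stage; both are honest routes to the same conclusion.
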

where $\X_j'$ are $j$ i.i.d points in $\R^d$ with density $f$,  $\p_\lambda$
is a Poisson process with intensity $\lambda f(\cdot)$, and $\X_j'$ and 
$\p_\lambda$ are  independent.

\begin{proof}
Consider the case $k=2$ for simplicity (the case of larger $k$ is the same
with more notation).  Define $h(\Y,\X)$ on subsets $\Y$ of $\X$ 
of size $j_1+j_2$ by
$$h(\Y,\X):=\sum_{\substack{\Y_1\subseteq\Y\\|\Y_1|=j_1}}
h_1(\Y_1,\X)h_2(\Y\setminus\Y_1,\X).$$
Then by Theorem \ref{one},
\begin{equation*}\begin{split}
\E&\left[\sum_{\Y_1,\subseteq\p_\lambda}\sum_{\Y_2,\subseteq\p_\lambda}h_1(\Y_1,\p_n)
h_2(\Y_2,\p_n)\1_{\{\Y_1\cap\Y_2=\emptyset\}}\right]\\
&\qquad=\E\left[\sum_{\Y\subseteq\p_n}
h(\Y,\p_n)\right]\\&\qquad=\frac{\lambda^{j_1+j_2}}{(j_1+j_2)!}\E h(\X_j',\X_j'\cup
\p_n)\\&\qquad=\frac{\lambda^{j_1+j_2}}{j_1!j_2!}\E\left[h_1(\X_{j_1}',\X_j'\cup\p_n)h_2
(\X_j'\setminus\X_{j_1}',\X_j'\cup\p_n)\right].
\end{split}\end{equation*}

\end{proof}

\medskip

One can apply these results to compute the mean and variance of $\s_{n,k,A}^P$, the number of isolated
empty $k$-simplices in $\p_n$ whose left-most vertex is in the set $A$.
Recall that 
$A$ is assumed to be open
with $\vol(\partial A)=0$.  

\begin{lemma}\label{CC_Poisson_means}
For $\mu_A$ as in Lemma \ref{exp-order},
$$\lim_{n\to\infty}n^{-k}r_n^{-d(k-1)}\E\left[\s_{n,k}^P\right]=
\lim_{n\to\infty}n^{-k}r_n^{-d(k-1)}\var\left[\s_{n,k}^P\right]=\frac{\mu_A}{k!}.$$
\end{lemma}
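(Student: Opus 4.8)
The plan is to compute the mean and the second moment of $\s_{n,k,A}^P$ directly from the Palm-type identities of Theorems \ref{one} and \ref{product}, using throughout the standing assumption $k\ge 3$ (so that the $k$ vertices of an empty $(k-1)$-simplex are automatically pairwise within distance $2r_n$, hence lie in a common ball of radius $2r_n$) and the sparse assumption $nr_n^d\to 0$. Write $\s_{n,k,A}^P=\sum_{\Y\subseteq\p_n,\,|\Y|=k}g(\Y,\p_n)$, where $g(\Y,\X)$ is the indicator that the vertices of $\Y$ form an empty $(k-1)$-simplex with left-most point in $A$ and that no point of $\X\setminus\Y$ lies within $2r_n$ of $\Y$, i.e.\ that $\Y$ is isolated. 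The one combinatorial input I will need is that two \emph{distinct} isolated empty $(k-1)$-simplices cannot share a vertex: if $\Y_1\neq\Y_2$ are $k$-sets with $\Y_1\cap\Y_2\neq\emptyset$, pick $u\in\Y_1\cap\Y_2$ and, since $|\Y_1|=|\Y_2|$, some $v\in\Y_2\setminus\Y_1$; then $u\sim v$ because both lie in the clique $\Y_2$, which contradicts isolation of $\Y_1$. Hence $g(\Y_1,\cdot)g(\Y_2,\cdot)=0$ for every distinct overlapping pair.

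For the mean, Theorem \ref{one} with $j=k$ gives $\E[\s_{n,k,A}^P]=\frac{n^k}{k!}\,\E[g(\X_k',\X_k'\cup\p_n)]$, where $\X_k'$ is a set of $k$ i.i.d.\ points with density $f$, independent of $\p_n$. Conditioning on $\X_k'$, the event that $\p_n$ avoids the $2r_n$-neighbourhood of $\X_k'$ has probability $\exp\big(-n\!\int_B f\big)$ with $B$ a union of $k$ balls of radius $2r_n$ contained in one ball of radius $4r_n$, hence $1-O(nr_n^d)\to 1$. The remaining factor is $\P[\X_k'\text{ is an empty }(k-1)\text{-simplex with LMP}\in A]$, which is evaluated exactly as the term $I_1$ in the proof of Lemma \ref{exp-order} (rescale $y_i=r_n^{-1}(x_i-x_1)$ and apply dominated convergence), yielding $\simeq r_n^{d(k-1)}\mu_A$. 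Thus $n^{-k}r_n^{-d(k-1)}\E[\s_{n,k,A}^P]\to\mu_A/k!$.

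For the variance, expand $(\s_{n,k,A}^P)^2$ over ordered pairs $(\Y_1,\Y_2)$. Since $g^2=g$, the diagonal contributes $\E[\s_{n,k,A}^P]$ (again by Theorem \ref{one}); the distinct overlapping pairs contribute $0$ by the observation above; and by Theorem \ref{product} (two factors, $j_1=j_2=k$) the disjoint pairs contribute $\frac{n^{2k}}{(k!)^2}\,\E\big[g(\X_k',\X_{2k}'\cup\p_n)\,g(\X_k'',\X_{2k}'\cup\p_n)\big]$, where $\X_{2k}'=\X_k'\sqcup\X_k''$ is independent of $\p_n$. I claim this last expectation equals $q^2+O\big(r_n^{d(2k-1)}\big)$, where $q:=\E[g(\X_k',\X_k'\cup\p_n)]=\frac{k!}{n^k}\E[\s_{n,k,A}^P]$. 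Indeed, conditioning on $\X_{2k}'$ and integrating out $\p_n$, the void probability $\exp\big(-n\!\int_{B(\X_k')\cup B(\X_k'')}f\big)$ factors \emph{exactly} as $\exp\big(-n\!\int_{B(\X_k')}f\big)\exp\big(-n\!\int_{B(\X_k'')}f\big)$ as soon as the two ball-unions are disjoint, which fails only when the two clusters lie within $4r_n$ of one another; on the empty-simplex event, that exceptional event — like the event that there is an edge joining the two clusters — forces all $2k$ points into a common ball of radius $O(r_n)$ and so has probability $O\big(r_n^{d(2k-1)}\big)$. Using independence of $\X_k'$ and $\X_k''$, the bulk term integrates to $q^2$, leaving an error $O\big(r_n^{d(2k-1)}\big)$. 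Consequently
\[
\var(\s_{n,k,A}^P)=\E[\s_{n,k,A}^P]+O\big(n^{2k}r_n^{d(2k-1)}\big)=\E[\s_{n,k,A}^P]+o\big(n^k r_n^{d(k-1)}\big),
\]
since $n^{2k}r_n^{d(2k-1)}=(nr_n^d)^k\cdot n^k r_n^{d(k-1)}$ and $nr_n^d\to 0$; combined with the mean computation this gives $n^{-k}r_n^{-d(k-1)}\var(\s_{n,k,A}^P)\to\mu_A/k!$. I expect the real work to be in this last step: the Poisson void probability must be factored \emph{exactly} on the bulk of configuration space rather than merely up to a $(1+O(nr_n^d))$ factor, since the cruder estimate would leave a spurious error of order $nr_n^d\cdot\E[\s_{n,k,A}^P]^2$, which need not be $o(\E[\s_{n,k,A}^P])$ once $\E[\s_{n,k,A}^P]\to\infty$.
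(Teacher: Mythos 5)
Your proposal is correct and relies on the same key tools as the paper: the Palm-type identities of Theorems \ref{one} and \ref{product}, the observation that two distinct overlapping isolated empty $(k-1)$-simplices cannot coexist, and the sparse assumption $nr_n^d\to 0$. The one place where you diverge from the paper is the bookkeeping for the disjoint-pair term in the second moment. The paper couples in an independent copy $\p_n'$ of the Poisson process and telescopes
$\E[\h(\X_k',\X_{2k}'\cup\p_n)\h(\Y_k',\X_{2k}'\cup\p_n)]-\big(\E[\h(\X_k',\X_k'\cup\p_n)]\big)^2$
as $E_1+E_2+E_3$, shows $E_1=0$, and bounds $E_2,E_3$ each by $O(r_n^{2d(k-1)+d})$; you instead condition on $\X_{2k}'$, observe that the Poisson void probability $\exp(-n\int_{B_1\cup B_2}f)$ factors \emph{exactly} when the two $2r_n$-fattened clusters are disjoint, and absorb the complementary ``clusters within $4r_n$'' event into an error of the same order $O(r_n^{d(2k-1)})$. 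Both routes land on $\var(\s_{n,k,A}^P)=\E[\s_{n,k,A}^P]+O\big((nr_n^d)^k\,n^kr_n^{d(k-1)}\big)$. Your exact-factorization-on-the-bulk argument is arguably more transparent here, because it makes explicit the point you flag at the end (and that is easy to miss): a $(1+O(nr_n^d))$ multiplicative estimate on the joint void probability would leave an uncontrolled $O(nr_n^d\cdot q^2\cdot n^{2k})$ term, so one really does need exactness on the bulk, which is precisely what spatial independence of the Poisson process supplies (and what the paper's $E_3$ term, equal to zero on disjoint supports, encodes in coupled form).
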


\begin{proof}
Let $\h_{r_n,A}(\{x_1,\ldots, x_k\},\X)$ be the indicator that
$\{x_1,\ldots,x_k\}\subseteq\X$ form an isolated empty $(k-1)$-simplex in
$\X$, whose left-most point is in $A$.  Then
\begin{equation}\begin{split}\label{mean-unPoisson}
\E[\s_{n,k,A}^P]&=\E\left[\sum_{\Y\subseteq\p_\lambda}\h_{r_n,A}(\Y,\p_n)\right]
=\frac{n^k}{k!}\E\left[\h_{r_n,A}(\X_k',\X_k'\cup\p_n)\right].
\end{split}\end{equation}
Now, $\E\left[\h_{r_n,A}(\X_k',\X_k'\cup\p_n)\right]\le\E\left[h_{r_n,A}(\X_k')
\right]\simeq r_n^{d(k-1)}\mu_A$.  Note that the conditional probability that
$\X_k'$ is isolated from $\p_n$ given that $\X_k'$ forms an empty 
$(k-1)$-simplex with left-most vertex in $A$
is bounded below by the probability that there are no points of $\p_n$ in
the ball of radius $4r_n$ about $X_1$, which is given by $e^{-n\vol_f(B_{4r_n}(X_1)
)}\ge e^{-n\|f\|_\infty\theta_d(4r_n)^d},$ since $\p_n$ is a Poisson 
process with intensity $nf(\cdot)$.  It thus follows that 
$$ \E\left[\h_{r_n,A}(\X_k',\X_k'\cup\p_n)\right]\ge e^{-n\|f\|_\infty\theta_d(4r_n)^d}
\E[h_{r_n,A}(\X_k')]\simeq e^{-n\|f\|_\infty\theta_d(4r_n)^d}r_n^{d(k-1)}\mu_A.$$
Since $nr_n^d\to0$, this shows that

\begin{equation*}
\E[\s_{n,k}^P]\simeq \frac{n^kr_n^{d(k-1)}\mu_A}{k!}.\end{equation*}

 A similar approach is taken to compute the variance:
\begin{equation*}\begin{split}
\E\left[(\s_{n,k,A}^P)^2\right]&
=\E\left[\sum_{\Y\subseteq\p_n}\h_{r_n,A}(\Y,\p_n)
\right]\\&\qquad\qquad
+\E\left[\sum_{j=0}^{k-1}\sum_{\Y,\Y'\subseteq\p_n}\h_{r_n,A}(\Y,\p_n)
\h_{r_n,A}(\Y',\p_n)\1_{\{|\Y\cap\Y'|=j\}}\right].
\end{split}\end{equation*}
The first summand has already been analyzed: $\E\left[\s_{n,k,A}^P\right]
\simeq\frac{n^kr_n^{d(k-1)}\mu_A}{k!}$.  
For the second, observe first that the terms corresponding to $j\neq 0$ 
vanish:\\ $\h_{r_n,A}(\Y,\p_n)\h_{r_n,A}(\Y',\p_n)\equiv 0$ if $|\Y\cap\Y'|=j$,
because if $\Y$ and $\Y'$ both form empty $k$-simplices, then neither is
isolated.
When $j=0$, applying Theorem \ref{product} yields
\begin{equation*}\begin{split}
\E&\left[\sum_{\Y,\Y'\subseteq\p_n}\h_{r_n,A}(\Y,\p_n)\h_{r_n,A}
(\Y',\p_n)\1_{\{\Y\cap\Y'=\emptyset\}}
\right]\\&\qquad\qquad
=\frac{n^{2k}}{(k!)^2}\E\left[\h_{r_n,A}(\X_k',\X_{2k}'\cup\p_n)
\h_{r_n,A}(\X_{2k}'\setminus\X_k',\X_{2k}'\cup\p_n)\right],
\end{split}\end{equation*} 
and thus (making use of \eqref{mean-unPoisson}),
\begin{equation*}\begin{split}
\var\left[\s_{n,k,A}^P\right]=\E\left[\s_{n,k,A}^P\right]+\frac{n^{2k}}{(k!)^2}
\Big(\E&\left[\h_{r_n,A}(\X_k',\X_{2k}'\cup\p_n)
\h_{r_n,A}(\X_{2k}'\setminus\X_k',\X_{2k}'\cup\p_n)\right]\\&-\left(\E\left[
\h_{r_n,A}(\X_k',\X_k'\cup\p_n)\right]\right)^2\Big),
\end{split}\end{equation*}
Now, let $\p_n'$ be an independent copy of $\p_n$.  For notational
convenience,
denote $\X_{2k}'\setminus\X_k'$ by $\mathcal{Y}_k'$ and abbreviate
$\h_{r_n,A}$ by $\h$. Then
\begin{equation*}\begin{split}
\E&\left[\h(\X_k',\X_{2k}'\cup\p_n)
\h(\mathcal{Y}_{k}',\X_{2k}'\cup\p_n)\right]-\left(\E\left[
\h(\X_k',\X_k'\cup\p_n)\right]\right)^2\\&=
\E\left[\h(\X_k',\X_{2k}'\cup\p_n)
\h(\mathcal{Y}_{k}',\X_{2k}'\cup\p_n)-\h(\X_k',\X_k'\cup\p_n)
\h(\mathcal{Y}_{k}',\mathcal{Y}_{k}'\cup\p'_n)\right]\\&=
\E\left[\left(\h(\X_k',\X_{2k}'\cup\p_n)-\h(\X_k',\X_{k}'\cup\p_n)\right)
\h(\mathcal{Y}_{k}',\X_{2k}'\cup\p_n)\right]\\&\quad+
\E\left[\h(\X_k',\X_{k}'\cup\p_n)
\left(\h(\mathcal{Y}_{k}',\X_{2k}'\cup\p_n)-\h(\mathcal{Y}_k',\mathcal{Y}_k'\cup\p_n)
\right)\right]\\&\quad+\E\left[
\h(\X_k',\X_k'\cup\p_n)\left(\h(\mathcal{Y}_{k}',
\mathcal{Y}_{k}'\cup\p_n)- 
\h(\mathcal{Y}_{k}',\mathcal{Y}_{k}'\cup\p'_n)\right)\right]\\&=E_1+E_2+E_3.
\end{split}\end{equation*}
Now, observe that in fact $E_1=0$: the difference is non-zero if and
only if $\X_k'$ and $\mathcal{Y}_k'$ are connected by an edge, in
which case the second factor is zero.

Observe that the difference in $E_2$ is non-positive.  Furthermore, 
it is non-zero if and only if $\X_k'$ and $\mathcal{Y}_k'$ are
connected by an edge, and both $\X_k'$ and $\mathcal{Y}_k'$
form empty $k$-simplices.  This probability is bounded above by
$\|f\|_\infty^{2k-1}\theta_d^{2k-1}(2r_n)^{2d(k-1)}(8r_n)^d.$

Finally, if $\left[\cup_{i=1}^kB_{2r_n}(X'_i)\right]\cap 
\left[\cup_{i=k+1}^{2k}B_{2r_n}(X'_i)\right]=\emptyset$, then the 
two terms of $E_3$ have the same distribution by the spacial
independence property of the Poisson process.  A contribution from
$E_3$ therefore only arises if in particular  $|X_1-X_j|\le 2r_n$ for
each
$2\le j\le k$, if $|X_{k+1}-X_j|\le 2r_n$ for $k+2\le j\le 2k$, and
$|X_1-X_{k+1}|\le 8r_n$.  The probability of this event is bounded
above by $\|f\|_\infty^{2k-1}\theta_d^{2k-1}(2r_n)^{2d(k-1)}(8r_n)^d.$
It follows that
\[\var\left[\s_{n,k,A}^P\right]= \E\left[\s_{n,k,A}^P\right]+E,\]
and
\[
|E|\le\frac{n^{2k}(2r_n)^{2dk-d}}{(k!)^2}2\|f\|_\infty^{2k-1}\theta_d^{2k-1}4^d=C(f,d,k)
(nr_n^d)^k(n^kr_n^{d(k-1)}),\]
where $C(f,k,d)$ is a constant depending on $f$, $d$, and $k$.  This
completes the proof.

\end{proof}

The following abstract normal approximation theorem is another version
of the dependency graph approach to Stein's method.  It is used in what
follows to prove a central limit theorem for $\s_{n,k}^P$.

\begin{thm}[Penrose]\label{normal}
Suppose $\{\xi_i\}_{i\in I}$ is a finite collection of random variables with
dependency graph $(I,\sim)$ with maximum degree $D-1$, with $\E[\xi_i]=0$
for each $i$.  Set $W:=\sum_{i\in I}\xi_i$; suppose $\E[W^2]=1$.  Let $Z$
be a standard normal random variable.  Then for all $t\in\R$,
$$\big|\P[W\le t]-\P[Z\le t]\big|\le\frac{2}{\sqrt[4]{2\pi}}\sqrt{D^2\sum_{
i\in I}\E|\xi_i|^3}+6\sqrt{D^3\sum_{i\in I}\E|\xi_i|^4}.$$
\end{thm}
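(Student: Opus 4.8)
The plan is to run Stein's method with the dependency graph providing the localization, and to reach the Kolmogorov distance by smoothing the indicator test function. I would fix $t\in\R$ and a parameter $\epsilon>0$, take a $C^1$ function $h=h_{t,\epsilon}$ with $\1_{(-\infty,t]}\le h\le\1_{(-\infty,t+\epsilon]}$ and $\|h'\|_\infty\le 1/\epsilon$, and let $f$ solve the Stein equation $f'(w)-wf(w)=h(w)-\E[h(Z)]$. The standard regularity estimates for the Stein solution give $\|f'\|_\infty\le 2$ and $\|f''\|_\infty\le 2\|h'\|_\infty\le 2/\epsilon$ (the role of the smoothing is to make this last quantity finite). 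Since $\P[W\le t]\le\E[h(W)]$ and $\E[h(Z)]\le\Phi(t+\epsilon)\le\Phi(t)+\epsilon/\sqrt{2\pi}$, one obtains
$$\P[W\le t]-\Phi(t)\le\E\big[f'(W)-Wf(W)\big]+\frac{\epsilon}{\sqrt{2\pi}},$$
and the matching lower bound follows by squeezing $h$ between $\1_{(-\infty,t-\epsilon]}$ and $\1_{(-\infty,t]}$; it therefore suffices to bound $\E[f'(W)-Wf(W)]$ for such $f$.

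For the localization I would set $N_i:=\{i\}\cup\{j:j\sim i\}$, so $|N_i|\le D$, and $W_i:=\sum_{j\in N_i}\xi_j$. The dependency-graph hypothesis makes $\xi_i$ independent of $W-W_i=\sum_{j\notin N_i}\xi_j$; hence $\E[\xi_i\,g(W-W_i)]=0$ for every $g$, and in particular $\E[\xi_iW_i]=\E[\xi_iW]$, so that $\sum_i\E[\xi_iW_i]=\E[W^2]=1$. The first fact gives $\E[Wf(W)]=\sum_i\E[\xi_i(f(W)-f(W-W_i))]$, and writing $f(W)-f(W-W_i)=W_if'(W)-R_i$ with $|R_i|\le\tfrac12\|f''\|_\infty W_i^2$, then inserting $\sum_i\E[\xi_iW_i]=1$, produces the decomposition
$$\E\big[f'(W)-Wf(W)\big]=\E\Big[f'(W)\big(1-\ts\sum_i\xi_iW_i\big)\Big]+\sum_i\E[\xi_iR_i]=:T_1+T_2.$$

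The remainder term is handled by $|T_2|\le\tfrac12\|f''\|_\infty\sum_i\E[|\xi_i|W_i^2]$: expanding $W_i^2=\sum_{j,k\in N_i}\xi_j\xi_k$, using $\E|\xi_i\xi_j\xi_k|\le\tfrac13(\E|\xi_i|^3+\E|\xi_j|^3+\E|\xi_k|^3)$, and counting with $|N_i|\le D$ and $|\{i:j\in N_i\}|\le D$ gives $\sum_i\E[|\xi_i|W_i^2]\le D^2\sum_i\E|\xi_i|^3$, so $|T_2|\le\epsilon^{-1}D^2\sum_i\E|\xi_i|^3$. For $T_1=-\cov(f'(W),V)$ with $V:=\sum_i\xi_iW_i$, I would use $|T_1|\le\|f'\|_\infty\,\E|V-\E V|\le 2\sqrt{\var(V)}$ and then estimate $\var(V)$ by writing $V=\sum_i\sum_{j\in N_i}\xi_i\xi_j$, which exhibits $V$ as a sum of $O(D|I|)$ products whose own dependency graph has maximum degree $O(D^2)$; the elementary variance identity for sums over a dependency graph, together with AM-GM to convert mixed fourth moments into $\sum_i\E|\xi_i|^4$, yields $\var(V)\le 9D^3\sum_i\E|\xi_i|^4$. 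Collecting everything,
$$\big|\P[W\le t]-\Phi(t)\big|\le 6\sqrt{D^3\ts\sum_i\E|\xi_i|^4}+\frac{D^2\sum_i\E|\xi_i|^3}{\epsilon}+\frac{\epsilon}{\sqrt{2\pi}},$$
and optimizing the last two terms over $\epsilon>0$ — taking $\epsilon=(2\pi)^{1/4}\big(D^2\sum_i\E|\xi_i|^3\big)^{1/2}$ — gives the stated inequality.

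I expect the main obstacle to be the combinatorial bookkeeping in the two error terms: the reductions to $\sum_i\E|\xi_i|^3$ in $T_2$ and, especially, to $\sum_i\E|\xi_i|^4$ in $\var(V)$ require careful tracking of which dependency neighborhoods (and neighborhoods of neighborhoods) can overlap and repeated use of Hölder/AM-GM, and one must be somewhat attentive so that the constants multiplying $D^2$ and $D^3$ come out exactly as in the statement. The one genuinely analytic point — already disposed of by the smoothing step — is that the Stein solution for a bare indicator has unbounded second derivative; replacing the indicator by $h_{t,\epsilon}$ makes $\|f''\|_\infty\le 2/\epsilon$ at the price of the smoothing error $\epsilon/\sqrt{2\pi}$, and it is precisely the trade-off between these that produces the square roots in the final bound.
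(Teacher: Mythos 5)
The paper does not prove this theorem: it is quoted verbatim as Theorem~2.4 of Penrose's monograph \cite{penrose} and used as a black box, so there is no ``paper's own proof'' to compare against. Your Stein's-method derivation is correct, and its structure is in fact the one that produces exactly these constants: the smoothed indicator $h_{t,\epsilon}$ gives the two Stein bounds $\|f'\|_\infty\le 2$ and $\|f''\|_\infty\le 2/\epsilon$ simultaneously; the localization $\E[\xi_i f(W-W_i)]=0$ with $W_i=\sum_{j\in N_i}\xi_j$ and $\sum_i\E[\xi_i W_i]=\E W^2=1$ sets up the decomposition into $T_1+T_2$; $|T_2|\le\epsilon^{-1}D^2\sum_i\E|\xi_i|^3$ follows from the AM--GM step and the double count $|\{(i,k):j,k\in N_i\}|\le D^2$ for fixed $j$; and $|T_1|\le 2\sqrt{\var(V)}$ together with $\var(V)\lesssim D^3\sum_i\E|\xi_i|^4$ and the optimization $\epsilon=(2\pi)^{1/4}\big(D^2\sum_i\E|\xi_i|^3\big)^{1/2}$ yields the $(2\pi)^{-1/4}$ prefactor. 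The only point worth tightening is the variance bound you state somewhat telegraphically: writing $V=\sum_{(i,j):j\in N_i}\xi_i\xi_j$, each summand has at most $4D^2$ dependent partners (one of $i',j'$ must land in $N_i\cup N_j$, which has size $\le 2D$, and the remaining index is then confined to a neighborhood of size $\le D$), and applying $|\cov(\xi_i\xi_j,\xi_{i'}\xi_{j'})|\le\tfrac12\big(\E\xi_i^4+\E\xi_j^4+\E\xi_{i'}^4+\E\xi_{j'}^4\big)$ (Cauchy--Schwarz plus AM--GM for the product-of-means term) gives $\var(V)\le 8D^3\sum_i\E|\xi_i|^4\le 9D^3\sum_i\E|\xi_i|^4$, so the claimed constant $6$ in $|T_1|\le 6\sqrt{D^3\sum_i\E|\xi_i|^4}$ is safe. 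In short: the argument is complete and correct, and matches the standard dependency-graph proof that underlies Penrose's statement.
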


Making use of this result, we prove the following.

\begin{thm}\label{Poissonized_normal}
With notation as above, and for $n^kr_n^{d(k-1)}\to\infty$ and $nr_n^d\to0$,
$$\frac{\s_{n,k}^P-\E\left[\s_{n,k}^P\right]}{\sqrt{\var\left[\s_{n,k}^P\right]}}
\Rightarrow\mathcal{N}(0,1).$$
\end{thm}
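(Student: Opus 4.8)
The plan is to apply Penrose's dependency-graph normal approximation, Theorem~\ref{normal}, to
$$W:=\frac{\s_{n,k}^P-\E[\s_{n,k}^P]}{\sigma},\qquad \sigma:=\sqrt{\var\big(\s_{n,k}^P\big)},$$
for which $\E[W^2]=1$ holds by construction. Write $\s_{n,k}^P=\sum_{\bf i}\zeta^P_{\bf i}$, the sum running over the (almost surely finite) collection $I$ of $k$-element index sets ${\bf i}$ of points of $\p_n$, where $\zeta^P_{\bf i}$ is the indicator that the corresponding $k$ points form an isolated empty $(k-1)$-simplex in $\p_n$; set $\xi_{\bf i}:=\sigma^{-1}\big(\zeta^P_{\bf i}-\E[\zeta^P_{\bf i}]\big)$, so that $W=\sum_{\bf i}\xi_{\bf i}$ and $\E[\xi_{\bf i}]=0$. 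The essential structural point, and the reason for passing to the Poissonized process in the first place, is that $\zeta^P_{\bf i}$ is measurable with respect to the restriction of $\p_n$ to the union of the balls of radius $2r_n$ about the $k$ points indexed by ${\bf i}$ (all of which lie in a single ball of radius $4r_n$ once they form a simplex, since $k\ge 3$): that the points form an empty simplex is local by the construction of $\mathcal C$, and that they are \emph{isolated} becomes local precisely because, for a Poisson process, ``no other point within distance $2r_n$'' is an event depending only on that bounded neighborhood. One may therefore take as dependency graph the relation ${\bf i}\sim{\bf j}$ holding whenever these determining neighborhoods meet or ${\bf i}$ and ${\bf j}$ share a point; by the spatial independence of $\p_n$ this is a genuine dependency graph for $\{\zeta^P_{\bf i}\}_{{\bf i}\in I}$, and it is exactly here that the argument would break down for the i.i.d.\ model, in which the isolation condition couples arbitrarily distant indices.

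The three inputs to Theorem~\ref{normal} are then estimated as follows. First, since $nr_n^d\to0$, Lemma~\ref{CC_Poisson_means} (applied with $A=\R^d$) gives
$$\sigma^2=\var\big(\s_{n,k}^P\big)\simeq\tfrac{\mu_{\R^d}}{k!}\,n^kr_n^{d(k-1)}\simeq\E\big[\s_{n,k}^P\big],$$
and $\sigma^2\to\infty$ by hypothesis. Second, $\E[\zeta^P_{\bf i}]\lesssim r_n^{d(k-1)}$ (as in Lemma~\ref{exp-order} and the remark after it), and since $\zeta^P_{\bf i}$ is a Bernoulli variable one has $\E|\xi_{\bf i}|^j=\sigma^{-j}\,\E\big|\zeta^P_{\bf i}-\E\zeta^P_{\bf i}\big|^j\lesssim\sigma^{-j}\E[\zeta^P_{\bf i}]$ for $j=3,4$; summing over the $\binom{|\p_n|}{k}$ indices and using the variance estimate,
$$\sum_{{\bf i}\in I}\E|\xi_{\bf i}|^3\lesssim\sigma^{-3}\,\E\big[\s_{n,k}^P\big]\lesssim\sigma^{-1},\qquad \sum_{{\bf i}\in I}\E|\xi_{\bf i}|^4\lesssim\sigma^{-4}\,\E\big[\s_{n,k}^P\big]\lesssim\sigma^{-2}.$$
Third, since a point of $\p_n$ has on average only $O(nr_n^d)=o(1)$ further points within $O(r_n)$, the dependency graph is geometrically sparse, and one controls its maximum degree $D-1$ using the assumption $nr_n^d\to0$. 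Feeding these into Theorem~\ref{normal},
$$\sup_{t\in\R}\big|\P[W\le t]-\Phi(t)\big|\lesssim\sqrt{D^2\sigma^{-1}}+\sqrt{D^3\sigma^{-2}}\xrightarrow{n\to\infty}0,$$
which is the asserted convergence; the moment and counting computations here are entirely parallel to the variance computation already carried out in the proof of Lemma~\ref{CC_Poisson_means} and to Penrose's treatment of subgraph counts of geometric random graphs.

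The delicate point, and the step I expect to be the main obstacle, is the control of the degree $D$. Because $nr_n^d$ is only assumed to tend to $0$, possibly very slowly, the random geometric graph on $\p_n$ behaves like a sparse Erd\H{o}s--R\'enyi graph, so its maximum degree is not bounded, and the crude global factor $D^2$ (respectively $D^3$) in Theorem~\ref{normal} is too lossy to be fed a worst-case bound directly. Following Penrose, the way around this is to replace the global degree by the \emph{local} geometric accounting implicit in the error sums: only indices ${\bf j}\sim{\bf i}$ that are genuinely clustered with ${\bf i}$ contribute, and their number is governed by the number of points of $\p_n$ in an $O(r_n)$-neighborhood, for which one has a high-probability bound of at most polylogarithmic order; the complementary (super-polynomially unlikely) event is estimated separately, as is the minor matter of conditioning on $|\p_n|=N_n$ and using its concentration about $n$. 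Once these bookkeeping issues are settled, the error in Theorem~\ref{normal} is $o(1)$ because $\sigma^2\to\infty$, which completes the proof; this central limit theorem for the Poissonized count $\s_{n,k}^P$ is in turn the main ingredient for part~\ref{CC_clt_normal} of Theorem~\ref{CC_clt}, after a de-Poissonization argument.
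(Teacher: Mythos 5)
Your plan to invoke Penrose's dependency-graph CLT (Theorem~\ref{normal}) is the right one, and you have correctly identified that Poissonization is what localizes the ``isolated'' condition. However, your indexing scheme does not actually satisfy the hypotheses of Theorem~\ref{normal}, and the obstacle you flag at the end (control of the degree $D$) is not a bookkeeping issue to be deferred but the central difficulty, which the paper resolves by a device you are missing.

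Specifically, you take $I$ to be the collection of $k$-element subsets of $\p_n$ and take ${\bf i}\sim{\bf j}$ when the $2r_n$-balls about the points of ${\bf i}$ and ${\bf j}$ meet. Both the index set $I$ and the relation $\sim$ are then \emph{random}: $I$ depends on $N_n$, and $\sim$ depends on the positions of the Poisson points. Theorem~\ref{normal} is a statement about a deterministic finite index set with a deterministic dependency graph; your construction is not an instance of it. Even setting this aside, the maximum degree of your graph is governed by the maximum local occupancy of $\p_n$ at scale $r_n$, which is unbounded and at best polylogarithmic with high probability, and you would then need a conditioning argument plus a separate bound on the exceptional event; as you concede, you do not carry this out. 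The paper circumvents all of this at a stroke by changing the index set: partition $\R^d$ into cubes $\{Q_{i,n}\}$ of side $r_n$, and for bounded $A$ set $\xi_i:=\sum_{\Y\subseteq\p_n}\h_{r_n,A\cap Q_{i,n}}(\Y,\p_n)$, the number of isolated empty $(k-1)$-simplices whose left-most vertex falls in $Q_{i,n}\cap A$. Now the index set $I_A$ is a deterministic finite set of cube indices, $i\sim j$ iff $\mathrm{dist}(Q_{i,n},Q_{j,n})<8r_n$ is a deterministic relation with degree at most $17^d$ (a constant independent of $n$), and by the spatial independence of $\p_n$ this is a genuine dependency graph. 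The moments $\E|\xi_i|^p$ are bounded by falling-factorial moments of a Poisson variable with mean $O(nr_n^d)$, $|I_A|=O(r_n^{-d})$, and Theorem~\ref{normal} yields an error of order $(n^kr_n^{d(k-1)})^{-1/4}\to 0$; one then passes from bounded $A$ to $A=\R^d$ by a truncation and Chebyshev argument. Without this binning-by-cube step, your proposal does not close.
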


\begin{proof}
To define a dependency graph 
for the summands of $\s_{n,k}^P$, the independence
properties of the Poisson process are exploited.  Let $\{Q_{i,n}\}_{i\in \N}$
be a partition of $\R^d$ into cubes of side length $r_n$.  For the moment, assume that $A$ is a bounded set, and let $I_A$ be the set of 
indices $i$ such that $diam(A\cap Q_{i,n})>2r_n$.  Write
 \begin{equation}\label{grid}
\s_{n,k,A}^P=\sum_{i\in I_A} \sum_{\Y\subseteq\p_n}\h_{r_n,A\cap Q_{i,n}}(\Y,\p_n).
\end{equation}
Observe that if one defines a relation $\sim$ on $I_A$ by $i\sim j$ if and
only if the Euclidean distance from $Q_{i,n}$ to $Q_{j,n}$ is 
less than
$8r_n$, then $(I_A,\sim)$ is a dependency graph for the summands in
\eqref{grid}.  The degree of vertices in this dependency graph is then
bounded by $17^d$.  

Let $\xi_i:=\sum_{\Y\subseteq\p_n}\h_{r_n,A\cap Q_{i,n}}(\Y,\p_n);$ to apply Theorem
\ref{normal}, bounds are needed for $\E|\xi_i-\E\xi_i|^p$ for $p=3,4$,
for which it suffices to have bounds on 
$\E|\xi|^p$ for $p=3,4$.  Observe that if $Z_{i}$ is the number of points 
within $2r_n$ of $Q_{i,n}$, then $Z_{i,n}$ is distributed as a Poisson 
random variable with mean $n\vol_f((Q_{i,n})_{2r_n})$, and 
$$|\xi_i|\le (Z_i)(Z_i-1)\cdots(Z_i-k+1)=:(Z_i)_k.$$
It follows that there is a constant $c$ depending only on $d$ and $f$, such
that for $\rho_n:=nr_n^d$, 
$$\E|\xi_i|^p\le\E (Z_i)_k^p\le\sum_{m=k}^\infty(m)_k^p\frac{e^{-c\rho_n}(c
\rho_n)^m}{m!}\le c'\rho_n^k$$
for some new constant $c'$ depending only on $d$, $f$, and $k$.

Note that since $A$ is bounded, 
$|I_A|$ is at worst of the order $r_n^{-d}$, with coefficient 
depending on 
$A$.  Applying Theorem \ref{normal} to $\frac{\xi_i-\E\xi_i}{\sqrt{\var(\s_{n,
k,A})}}$  gives
$$\left|\P\left[\frac{\s_{n,k,A}^P-\E \s_{n,k,A}^P}{\sqrt{\var(\s_{n,k,A}^P)}}
\le t\right]-\P[Z\le t]\right|\le c''[n^kr_n^{d(k-1)}]^{-1/4},$$
which tends to zero as $n$ tends to infinity.

To move to $A=\R_d$, let $\zeta_{n,k}(A):=\frac{\s_{n,k,A}^P
-\E[\s_{n,k,A}^P]}{\sqrt{n^kr_n^{d(k-1)}}}$ and consider $A_K:=(-K,K)^d$
and $A^K:=\R^d\setminus[-K,K]^d.$  Given $t\in\R$ and $\epsilon>0$,
\begin{equation*}\begin{split}
\P[\zeta_{n,k}(\R^d)\le t]=\P[\zeta_{n,k}&(A_K)\le t-\epsilon]-\P[\{\zeta_{n,k}(A_K)
\le t-\epsilon\}\cap\{\zeta_{n,k}(\R^d)>t\}]\\&
+\P[\{|\zeta_{n,k}(A_K)-t|<\epsilon
\}\cap\{\zeta_{n,k}(\R^d)\le t\}]\\&+\P[\{\zeta_{n,k}(A_K)\ge t+\epsilon\}\cap\{
\zeta_{n,k}(\R^d)\le t\}].
\end{split}\end{equation*}
Now, $\zeta_{n,k}(\R^d)=\zeta_{n,k}(A_K)+\zeta_{n,k}(A^K)$ almost surely since
$vol(A_K^c\cup (A^K)^c)=0$, so 
$$\big|\P[\zeta_{n,k}(\R^d)\le t]-\P[\zeta_{n,k}(A_K)\le t-\epsilon]\big|\le
\P[|\zeta_{n,k}(A^K)|\ge\epsilon]+\P[|\zeta_{n,k}(A_K)-t|<\epsilon].$$
By Chebychev's inequality and the central limit theorem already
established for bounded sets, this last expression is bounded above by 
\begin{equation*}\begin{split}
\frac{1}{\epsilon^2}\var(\zeta_{n,k}(A^K))&+
\P\left[\left|\sqrt{\frac{\var(\s_{n,k,A_K}^P)}{n^kr_n^{d(k-1)}}}Z-t\right|<
\epsilon\right]+c_K\left[(n^kr_n^{d(k-1)})^{-1/4}\right]\\&\le
\frac{1}{\epsilon^2}\var(\zeta_{n,k}(A^K))+\frac{2\epsilon\sqrt{
n^kr_n^{d(k-1)}}}{\sqrt{2\pi\var(\s_{n,k,A_K}^P)}}+c_K\left[(n^kr_n^{d(k
-1)})^{-1/4}\right]\\&\simeq \frac{1}{\epsilon^2}\frac{\mu_{A^K}}{k!}
+\frac{2\epsilon\sqrt{k!}}{\sqrt{2\pi\mu_{A_K}}}+c_K\left[(n^kr_n^{d(k
-1)})^{-1/4}\right],
\end{split}\end{equation*}
for a constant $c_K$ depending on $K$.  
Taking $n$ to infinity for $K$ and $\epsilon$ fixed yields
$$\limsup_{n\to\infty}\big|\P[\zeta_{n,k}(\R^d)\le t]-\P[\zeta_{n,k}(A_K)\le 
t-\epsilon]\big|\le \frac{1}{\epsilon^2}\frac{\mu_{A^K}}{k!}
+\frac{2\epsilon\sqrt{k!}}{\sqrt{2\pi\mu_{A_K}}},$$
which, together with  
the central limit theorem for $\zeta_{n,k}(A_K)$, implies that 
$$\limsup_{n\to\infty}\left|\P[\zeta_{n,k}(\R^d)\le t]-\P\left[
\sqrt{\frac{\var(\s_{n,k,A_K}^P)}{n^kr_n^{d(k-1)}}}Z\le 
t-\epsilon\right]\right|\le \frac{1}{\epsilon^2}\frac{\mu_{A^K}}{k!}
+\frac{2\epsilon\sqrt{k!}}{\sqrt{2\pi\mu_{A_K}}}.$$
Now, 
\begin{eqnarray*}
&&\P\left[\sqrt{\frac{\var(\s_{n,k,A_K}^P)}{n^kr_n^{d(k-1)}}}Z\le 
t-\epsilon\right]\\
&=&\Phi\left(\sqrt{\frac{n^kr_n^{d(k-1)}}{\var(\s_{n,k,A_K}^P)}}(t-
\epsilon)\right)\xrightarrow{n\to\infty}\Phi\left(\sqrt{\frac{k!}{\mu_{A_K}}}
(t-\epsilon)\right);
\end{eqnarray*}
that is, 
$$\limsup_{n\to\infty}\left|\P[\zeta_{n,k}(\R^d)\le t]-\Phi\left(\sqrt{
\frac{k!}{\mu_{A_K}}}(t-\epsilon)\right)\right|\le \frac{1}{\epsilon^2}
\frac{\mu_{A^K}}{\mu}+\frac{2\epsilon\sqrt{k!}}{\sqrt{2\pi\mu_{A_K}}}.$$
Recall that $\lim_{K\to\infty}\mu_{A_K}=\mu$ and $\lim_{K\to\infty}\mu_{A^K}=
0$.  Thus for $n$ and $K$ large enough, 
$$\left|\P[\zeta_{n,k}(\R^d)\le t]-\Phi\left(\sqrt{
\frac{k!}{\mu}}(t-\epsilon)\right)\right|\le \frac{2\epsilon\sqrt{k!}}{\sqrt{2\pi\mu}}+\epsilon.$$
Since $\Phi\left(\sqrt{
\frac{k!}{\mu}}(t-\epsilon)\right)\xrightarrow{\epsilon\to0}\Phi\left(
\sqrt{
\frac{k!}{\mu}}t\right)$ and $\epsilon$ was arbitrary, this finally
shows that
$$\lim_{n\to\infty}\left|\P[\s_{n,k}^P\le t]-\Phi\left(\sqrt{
\frac{k!}{\mu}}t\right)\right|=0.$$

\end{proof}

\medskip

The remaining work is to use this result to obtain the same 
result for $\s_{n,k}$ itself. 
To do so, the following ``de-Poissonization result'' is used.
\begin{thm}[See \cite{penrose}]\label{de-Poisson}
Suppose that for each $n\in\N$, $H_n(\X)$ is a real-valued functional on finite
sets $\X\subseteq\R^d$.  Suppose that for some $\sigma^2\ge 0$, 
\begin{enumerate}
\item $\ds\frac{1}{n}\var(H_n(\p_n))\longrightarrow\sigma^2,$
and
\item $\ds\frac{1}{\sqrt{n}}\big[H_n(\p_n)-\E H_n(\p_n)\big]
\Longrightarrow\sigma^2Z,$
for $Z$ a standard normal random variable.
\end{enumerate}
Suppose that there are constants $\alpha\in\R$ and $\gamma>\frac{1}{2}$ such
that the increments $R_{m,n}=H_n(\X_{m+1})-H_n(\X_m)$ satisfy
\begin{equation}\label{means}
\lim_{n\to\infty}\left(\sup_{n-n^\gamma\le m\le n+n^\gamma}|\E[R_{m,n}]-\alpha|
\right)=0,
\end{equation}
\begin{equation}\label{covs}
\lim_{n\to\infty}\left(\sup_{n-n^\gamma\le m<m'\le n+n^\gamma}|\E[R_{m,n}R_{m',n}]
-\alpha^2|\right)=0,
\end{equation}
and
\begin{equation}\label{vars}
\lim_{n\to\infty}\left(\frac{1}{\sqrt{n}}\sup_{n-n^\gamma\le m\le n+n^\gamma}
\E[R_{m,n}^2]\right)=0.
\end{equation}
Finally, assume that there is a constant $\beta>0$ such that, with probability
one, 
$$|H_n(\X_m)|\le\beta(n+m)^\beta.$$
Then $\alpha^2\le\sigma^2$ and as $n\to\infty$, $\frac{1}{n}\var(H_n(\X_n))
\to \sigma^2-\alpha^2$ and 
$$\frac{1}{\sqrt{n}}\big[H_n(\X_n)-\E H_n(\X_n)\big]\Longrightarrow
\sqrt{\sigma^2-\alpha^2}Z.$$

\end{thm}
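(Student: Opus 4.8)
The plan is to run Penrose's standard de-Poissonization argument, coupling the two point configurations through a single i.i.d.\ sequence. Take $X_1,X_2,\ldots$ i.i.d.\ with density $f$, let $N_n$ be a Poisson$(n)$ variable that is \emph{independent} of this sequence, and set $\p_n=\{X_1,\ldots,X_{N_n}\}$ and $\X_m=\{X_1,\ldots,X_m\}$. Then $H_n(\p_n)-H_n(\X_n)$ is a telescoping sum of the increments $R_{m,n}$ over the indices strictly between $n$ and $N_n$, with the obvious sign change when $N_n<n$.

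The first and main step is to show that
\[
\frac{1}{\sqrt n}\Bigl(H_n(\p_n)-H_n(\X_n)-\alpha\,(N_n-n)\Bigr)\longrightarrow 0
\]
in $L^2$, hence in probability. Since $N_n-n$ is typically of order $\sqrt n$, I would condition on $\{N_n=n+j\}$: by \eqref{means} the conditional mean of the increment sum is $\alpha j$ up to an error that is $o(|j|)$ uniformly over the typical range, while \eqref{covs} shows its conditional second moment is $\alpha^2 j^2+o(j^2)$, so that the conditional variance of the centered sum is $o(j^2)$ and contributes negligibly once averaged against the law of $j$. For the atypical range $|j|\gtrsim n^{\gamma}$ one invokes \eqref{vars} to bound each increment, together with the polynomial a priori bound $|H_n(\X_m)|\le\beta(n+m)^\beta$ and the exponential decay of the Poisson tails, to see that these terms contribute $o(\sqrt n)$. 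This uniform control of the telescoped increment sum over the random index window is the technical heart of the argument, and it is exactly what the three moment hypotheses are tailored to provide.

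Granting that approximation, set $A_n:=n^{-1/2}\bigl(H_n(\X_n)-\E H_n(\X_n)\bigr)$ and $B_n:=\alpha\,n^{-1/2}(N_n-n)$, so that $n^{-1/2}\bigl(H_n(\p_n)-\E H_n(\p_n)\bigr)=A_n+B_n+o(1)$ in probability. The structural point is that $A_n$ is a function of $X_1,\ldots,X_n$ only, whereas $N_n$ is independent of the whole sequence, so $A_n$ and $B_n$ are \emph{independent}; moreover $B_n\Rightarrow\alpha Z$ by the classical central limit theorem for the Poisson distribution and $\var(B_n)\to\alpha^2$. Hypothesis (ii) gives that $A_n+B_n$ converges to a centered Gaussian of variance $\sigma^2$, so by independence $\phi_{A_n}(t)\,\phi_{B_n}(t)\to e^{-\sigma^2t^2/2}$ and hence $\phi_{A_n}(t)\to e^{-(\sigma^2-\alpha^2)t^2/2}$, which both forces $\sigma^2\ge\alpha^2$ and yields $A_n\Rightarrow\sqrt{\sigma^2-\alpha^2}\,Z$. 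The variance claim follows in the same way from hypothesis (i), from $\var(B_n)\to\alpha^2$, and from $\cov(A_n,B_n)=0$, the growth bound supplying the uniform integrability needed to pass the variances to the limit. The one delicate point outside the $L^2$ estimate is making the "$o(1)$ in probability'' into genuine convergence of the relevant second moments, which is again handled by the polynomial growth hypothesis.
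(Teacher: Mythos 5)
Your proposal is correct and follows the standard de-Poissonization strategy of Penrose (Random Geometric Graphs, Thm.~2.16), which is precisely the source the paper cites without reproving: couple $\p_n$ and $\X_n$ through a single i.i.d.\ sequence with an independent $N_n\sim\mathrm{Poisson}(n)$, show the telescoped increment sum is $L^2$-close to $\alpha(N_n-n)$ on scale $\sqrt n$ using hypotheses \eqref{means}--\eqref{vars} on the typical window and the polynomial growth bound plus Poisson tails on the atypical one, and then exploit independence of $A_n=n^{-1/2}(H_n(\X_n)-\E H_n(\X_n))$ and $B_n=\alpha n^{-1/2}(N_n-n)$ through characteristic functions to peel off the Gaussian of variance $\alpha^2$. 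One small imprecision worth noting: in your control of the conditional second moment on the typical range, the diagonal term $\sum_m \E[R_{m,n}^2]$ is \emph{not} handled by \eqref{covs} (which governs only $m<m'$); you need \eqref{vars} there too, giving a contribution of order $|j|\cdot o(\sqrt n)$ rather than $o(j^2)$, which is nevertheless $o(n)$ once averaged against the Poisson law of $j$ — so the conclusion stands.
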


In conjunction with Theorem \ref{Poissonized_normal}, this yields the following.

\begin{thm}
With notation as above, and for $n^kr_n^{d(k-1)}\to\infty$ and $nr_n^d\to0$,
$$\frac{\s_{n,k}-\E\left[\s_{n,k}\right]}{\sqrt{\var\left[\s_{n,k}\right]}}
\Rightarrow\mathcal{N}(0,1).$$
\end{thm}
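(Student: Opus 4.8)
The plan is to derive this from the Poissonized statements already in hand (Lemma~\ref{CC_Poisson_means} and Theorem~\ref{Poissonized_normal}) by applying Penrose's de-Poissonization result, Theorem~\ref{de-Poisson}. Take for the functional $H_n(\X):=\s_{n,k}(\X)\big/\sqrt{n^{k-1}r_n^{d(k-1)}}$, where $\s_{n,k}(\X)$ is the number of isolated empty $(k-1)$-simplices with respect to $r_n$ among the finite set $\X\subseteq\R^d$, and set $\sigma^2:=\mu/k!$ and $\alpha:=0$ (recall $k\ge3$ throughout this part of the section). Hypotheses (1) and (2) of Theorem~\ref{de-Poisson} are then precisely what has been proved: by Lemma~\ref{CC_Poisson_means} with $A=\R^d$, $\frac1n\var(H_n(\p_n))=\var(\s_{n,k}^P)\big/(n^kr_n^{d(k-1)})\to\mu/k!=\sigma^2$; and by Theorem~\ref{Poissonized_normal} together with that same lemma, $\frac1{\sqrt n}\big(H_n(\p_n)-\E H_n(\p_n)\big)=\big(\s_{n,k}^P-\E\s_{n,k}^P\big)\big/\sqrt{n^kr_n^{d(k-1)}}\Rightarrow\mathcal N(0,\sigma^2)$. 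What remains is to verify the increment conditions \eqref{means}--\eqref{vars} and the polynomial growth bound for $R_{m,n}:=H_n(\X_{m+1})-H_n(\X_m)$.

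The key observation making these verifications routine is the \emph{locality} of the increment: adjoining the single point $X_{m+1}$ to $\X_m$ changes $\s_{n,k}$ only by (i) creating new isolated empty $(k-1)$-simplices having $X_{m+1}$ as a vertex, which needs $k-1$ other points within $2r_n$ of $X_{m+1}$, and (ii) destroying the isolation of an existing isolated empty $(k-1)$-simplex of $\X_m$ that $X_{m+1}$ lands within $2r_n$ of — and, since for $k\ge3$ the vertices of an empty $(k-1)$-simplex are pairwise within $2r_n$, all such vertices then lie within $4r_n$ of $X_{m+1}$. Thus $R_{m,n}$ depends only on $X_{m+1}$ and the points of $\X_m$ in a ball of radius $O(r_n)$ about $X_{m+1}$; writing $Z_{m+1}$ for the number of such points (a mixed-binomial count with mean $O(nr_n^d)$), one gets $|R_{m,n}|\lesssim(Z_{m+1})_k\big/\sqrt{n^{k-1}r_n^{d(k-1)}}$. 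The expectation computations of Lemma~\ref{exp-order} and Lemma~\ref{CC_Poisson_means}, applied now to the $m$-point sample (legitimate uniformly for $m$ in a window $n-n^\gamma\le m\le n+n^\gamma$ with $\gamma\in(\tfrac12,1)$, since there $m\simeq n$), give $\big|\E[\s_{n,k}(\X_{m+1})-\s_{n,k}(\X_m)]\big|\lesssim n^{k-1}r_n^{d(k-1)}=(nr_n^d)^{k-1}$, whence $\sup_m|\E R_{m,n}|\lesssim(nr_n^d)^{(k-1)/2}\to0$, which is \eqref{means} with $\alpha=0$. In the same way the diagonal term dominates in the second-moment expansion, giving $\sup_m\E[R_{m,n}^2]=O(1)$ and hence \eqref{vars}. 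Finally $|H_n(\X_m)|\le\binom mk\big/\sqrt{n^{k-1}r_n^{d(k-1)}}=O(m^k\sqrt n)$ deterministically (using $n^{k-1}r_n^{d(k-1)}=(nr_n^d)^{k-1}\gg n^{-1}$ in this regime), so the polynomial bound holds with $\beta=k+1$.

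The real work, exactly as in Penrose's treatment of subgraph counts, is condition \eqref{covs}: because $\E R_{m,n}^2$ does \emph{not} vanish, Cauchy--Schwarz is insufficient and one must use the locality of the increments. Let $\1_{\mathrm{cl}}$ be the indicator that $|X_{m+1}-X_{m'+1}|\le Cr_n$ for a suitable constant $C$. On $\{\1_{\mathrm{cl}}=1\}$, which has probability $O(r_n^d)$, bound $|R_{m,n}R_{m',n}|\le\tfrac12(R_{m,n}^2+R_{m',n}^2)$ and use $\E[R_{m,n}^2\1_{\mathrm{cl}}]=O(r_n^d)$ and $\E[R_{m',n}^2\1_{\mathrm{cl}}]=O(r_n^d)$ (obtained by conditioning on $X_{m+1}$, resp.\ on $X_{m+1}$ and $X_{m'+1}$) to see this contribution is $o(1)$ uniformly. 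On the complement, $R_{m,n}$ and $R_{m',n}$ are functionals of the configuration in disjoint balls of radius $O(r_n)$ and so are asymptotically independent for i.i.d.\ points (again following Penrose), giving $\E[R_{m,n}R_{m',n}\1_{\mathrm{cl}}^c]=\E R_{m,n}\,\E R_{m',n}+o(1)=o(1)$ uniformly by the estimate $\E R_{m,n}\to0$ from the previous paragraph; this yields \eqref{covs} with $\alpha=0$. Theorem~\ref{de-Poisson} then gives $\frac1n\var(H_n(\X_n))\to\sigma^2$, i.e.\ $\var(\s_{n,k})\simeq(\mu/k!)\,n^kr_n^{d(k-1)}$, and $\big(\s_{n,k}-\E\s_{n,k}\big)\big/\sqrt{n^kr_n^{d(k-1)}}\Rightarrow\mathcal N(0,\sigma^2)$; dividing one by the square root of the other and applying Slutsky's theorem yields $\big(\s_{n,k}-\E\s_{n,k}\big)\big/\sqrt{\var\s_{n,k}}\Rightarrow\mathcal N(0,1)$. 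The only genuinely delicate point is the asymptotic-independence estimate behind \eqref{covs}; everything else is bookkeeping with the moment bounds already developed in this section.
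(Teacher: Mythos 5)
Your proposal is the same de-Poissonization strategy the paper uses: apply Penrose's Theorem~\ref{de-Poisson} to $H_n(\X)=\s_{n,k}(\X)/\sqrt{(nr_n^d)^{k-1}}$ with $\sigma^2=\mu/k!$ and $\alpha=0$, feeding in Lemma~\ref{CC_Poisson_means} and Theorem~\ref{Poissonized_normal} for hypotheses (1)--(2), and then verifying the increment conditions by exploiting that the add-one increment $D_{m,n}$ only involves points in a ball of radius $O(r_n)$ about $X_{m+1}$. The mean bound $\E D_{m,n}\lesssim(nr_n^d)^{k-1}$, the crude $|D_{m,n}|\lesssim(Z_{m+1})_k$ estimate giving $\sup_m\E R_{m,n}^2=O(1)$, and the polynomial growth check all match what the paper does up to minor bookkeeping (the paper actually bounds $|D_{m,n}|$ by $(Z)_{k-1}$-type falling factorials, but the weaker $(Z)_k$ still suffices for \eqref{vars}).

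Where you depart from the paper, and where the only real exposure lies, is condition~\eqref{covs}. You split on whether $X_{m+1}$ and $X_{m'+1}$ are within $Cr_n$, handle the close event via $|R_{m,n}R_{m',n}|\le\tfrac12(R_{m,n}^2+R_{m',n}^2)$ together with a conditional $O(r_n^d)$ probability bound (fine), and on the far event assert $\E[R_{m,n}R_{m',n}\1_{\mathrm{cl}}^c]=\E R_{m,n}\,\E R_{m',n}+o(1)$ by ``asymptotic independence for i.i.d.\ points.'' That last step is a genuine lemma, not a triviality: for i.i.d.\ (binomial) input the configurations in two disjoint $O(r_n)$-balls are \emph{not} independent, and the $o(1)$ error must be quantified. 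The paper avoids invoking any such lemma by expanding $\E[D_{m,n}D_{m',n}]$ directly over pairs of index sets, using that the simplex-forming indicators on \emph{disjoint index sets} are exactly independent and then crudely bounding the conditional probabilities of the isolation and connection events; this gives $\E[D_{m,n}D_{m',n}]\lesssim(nr_n^d)^k$ for $m\neq m'$ with no appeal to decoupling. Your route would work if you cited and checked the hypotheses of the relevant decoupling estimate from Penrose (and verified it applies to the isolation functional, which is not a pure subgraph count), but as written that step is the proof and you have left it as a pointer. Everything else is sound and matches the paper's argument.
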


\begin{proof}
Theorem \ref{de-Poisson} is applied to the functional  
$$H_n(\X):=\frac{1}{\sqrt{(nr_n^d)^{k-1}}}\sum_{\Y\subseteq\X}\h_{r_n}
(\Y,\X);$$
$\sigma^2=\frac{\mu}{k!}$ and the central limit theorem
holds for $H_n(\p_n)$ by Theorem \ref{Poissonized_normal}.  

Let $D_{m,n}:=\sum_{\Y\subseteq\X_{m+1}}\h_{r_n}(\Y,\X_{m+1})-\sum_{\Y\subseteq\X_m}
\h_{r_n}(\Y,\X_m)$, and observe that $D_{m,n}$
 is the number of isolated empty $(k-1)$-simplices in 
$\X_{m+1}$ with $X_{m+1}$ as a vertex, minus the number of empty $(k-1)$-simplices
in $\X_m$ which are isolated in $\X_m$ but connected to $X_{m+1}$.
Thus
\begin{eqnarray}\label{inc_exp}
\E[D_{m,n}]&=&\binom{m}{k-1}\E[\h_{r_n}(\X_k,\X_{m+1})]\\
&&- \binom{m}{k}\E[\h_{r_n}(\X_k,\X_m)]\P\left[X_{m+1}\in\cup_{i=1}^kB_{2r_n}(X_i)
\right]. \nonumber
\end{eqnarray}
It is clear that 
 $$(1-\|f\|_\infty\theta_d(4r_n)^d)^{m+1-k}r_n^{d(k-1)}\mu\le
\E[\h_{r_n}(\X_k,\X_{m+1})]\le r_n^{d(k-1)}\mu,$$
with the upper bound arising from removing the condition that $\X_k$ be a 
component in $\mathcal{C}(\X_{m+1})$ and the lower bound arising
by bounding below the conditional probability that $\X_k$ is a component, 
given that it forms an empty $(k-1)$-simplex.   If $\gamma<1$, then 
$\lim_{n\to\infty}(1-\|f\|_\infty\theta_d(4r_n)^d)^{m+1-k}=1$, uniformly
in $m\in[n-n^\gamma,n+n^\gamma],$
thus $\E[\h_{r_n}(\X_k,\X_{m+1})]\simeq r_n^{d(k-1)}\mu$ uniformly in 
$m\in[n-n^\gamma,n+n^\gamma]$, and the same is
true for $\E[\h_{r_n}(\X_k,\X_{m})]$.

For the second term of \eqref{inc_exp}, observe that
$$\frac{\binom{m}{k}}{\binom{m}{k-1}}\P\left[X_{m+1}\in\cup_{i=1}^kB_{2r_n}(X_i)
\right]\lesssim\frac{m}{k}\|f\|_\infty\theta_d(4r_n)^d,$$
and $\lim_{n\to\infty}mr_n^d=0$, uniformly in $m\in[n-n^\gamma,n+n^\gamma]$.  
That is, the second term is of strictly
smaller order than the first.  Thus
$$\lim_{n\to\infty}\sup_{n-n^\gamma\le m\le n+n^\gamma}\left|(nr_n^d)^{1-k}\E[D_{m,n}]
-\frac{1}{(k-1)!}\mu\right|=0.$$
This implies that
$$\lim_{n\to\infty}\sup_{n-n^\gamma\le m\le n+n^\gamma}\left|(nr_n^d)^{(1-k)/2}\E[
D_{m,n}]\right|=0,$$
since $nr_n^d\to0$ as $n\to\infty$, and so the first increment condition
of the theorem is satisfied with $\alpha=0$ and any choice of $\gamma\in
(\frac{1}{2},1)$.

Next, consider the quantity $\E[D_{m,n}D_{m',n}]$ for $m\le m'$.
Recall that 
$$D_{m,n}=\sum_{\substack{\Y\subseteq\X_m\\|\Y|=k-1}}\h_{r_n}(\Y\cup\{X_{m+1}\},
\X_{m+1})-\sum_{\substack{\Y\subseteq\X_{m}\\|\Y|=k}}\h_{r_n}(\Y,
\X_{m})\1_{\left\{X_{m+1}\in\bigcup_{y\in\Y}B_{2r_n}(y)\right\}}.$$
First consider the contribution to $\E[D_{m,n}D_{m',n}]$ from terms of the form 
$$\E\big[\h_{r_n}(\Y\cup \{X_{m+1}\},\X_{m+1})\h_{r_n}(\Y'\cup \{X_{m'+1}\},
\X_{m'+1})\big]$$ for $\Y,\Y'$ such that $\big(\Y\cup \{X_{m+1}\}\big)\cap 
\Y'=\emptyset.$
By conditioning on the event $
\h_{r_n}(\Y\cup\{X_{m+1}\},\X_{m+1})=1$, it follows that 
\begin{equation*}\begin{split}
\E[\h_{r_n}(&\Y\cup \{X_{m+1}\},\X_{m+1})\h_{r_n}(\Y'\cup\{X_{m'+1}\},\X_{m'+1})]
\simeq r_n^{2d(k-1)}\mu^2\zeta,
\end{split}\end{equation*}
where $\zeta$ is the conditional probability that $\Y'\cup X_{m'+1}$
is a component in $\X_{m'+1}$, given that it forms an empty $(k-1)$-simplex,
and that $\Y\cup X_{m+1}$ forms an empty $(k-1)$-simplex which is 
not connected to any other points of $\X_{m+1}$.  Note that
if $m=m'$ then $\zeta=0$.  Otherwise, simply bound $\zeta\le 1$, so that
these terms have asymptotic order bounded above by 
$r_n^{2d(k-1)}\mu^2$, uniformly
in $m$.  The number of such terms is bounded by $\frac{(n+n^\gamma)^{2k-2}}{
[(k-1)!]^2}.$  

Note that if
$\big(\Y\cup \{X_{m+1}\}\big)\cap \Y'\neq\emptyset,$ and $m\neq m'$,
then $\h_{r_n}(\Y\cup\{
X_{m+1}\},\X_{m+1})\h_{r_n}(\Y'\cup\{X_{m'+1}\},\X_{m'+1})\equiv 0.$  If
$m=m'$ and then it must be that $\Y=\Y'$ to get a non-zero contribution.
In this case, one gains a contribution to $\E[D_{m,n}^2]$ of 
$$\binom{m}{k-1}r_n^{d(k-1)}\mu\le\frac{(n+n^\gamma)^{k-1}r_n^{d(k-1)}\mu}{
(k-1)!}.$$

Moving on to the cross terms, if $m'=m$ then  
$$\h_{r_n}(\Y\cup\{X_{m+1}\},\X_{m+1})\h_{r_n}(\Y',\X_{m})\1_{\left\{X_{m+1}
\in\bigcup_{y\in\Y'}B_{2r_n}(y)\right\}}\equiv 0.$$
If $m<m'$ (or $m>m'$), then 
\begin{equation*}\begin{split}
\E&\left[\h_{r_n}(\Y\cup\{X_{m+1}\},\X_{m+1})\h_{r_n}(\Y',\X_{m'})\1_{\left\{X_{m'+1}
\in\bigcup_{y\in\Y'}B_{2r_n}(y)\right\}}\right]\\&\qquad\qquad
\le\E\left[\h_{r_n}(\Y\cup\{X_{
m+1}\},\X_{m+1})\h_{r_n}(\Y',\X_{m'})\right]\|f\|_\infty\theta_d(4r_n)^d.
\end{split}\end{equation*}
Again, to get a non-zero contribution, it must be that $(\Y\cup\{X_{m+1}\})
\cap\Y'=\emptyset.$  In this case, the expression above is bounded above
by 
$$(r_n^{d(k-1)}\mu)^2\|f\|_\infty\theta_d(4r_n)^d.$$
The number of such terms is bounded by $\binom{m}{k-1}\binom{m}{k}\le
\frac{(n+n^\gamma)^{2k-1}}{k!(k-1)!}.$

For the product of the second sums  from $D_{m,n}$ and $D_{m',n}$, 
we have already seen
that the conditional probability that $X_{m+1}\in\bigcup_{y\in\Y}B_{2r_n}(y)$
given $\Y$ is bounded above by $\|f\|_\infty\theta_d(4r_n)^d$, and so if
$m=m'$,
\begin{eqnarray*}
\lefteqn{\E\left[\sum_{\Y\subseteq\X_{m'}}\left(\h_{r_n}(\Y,\X_{m'})\1_{\left\{X_{m'+1}\in
\bigcup_{y\in\Y}B_{2r_n}(y)\right\}}\right)^2\right] \le} \\
& & \frac{(n+n^\gamma)^k}{k!}r_n^{d(k-1)}\mu\|f\|_\infty\theta_d(4r_n)^d,
\end{eqnarray*}
while if $\Y\neq\Y'$,
$$\left(\h_{r_n}(\Y,\X_{m'})\1_{\left\{X_{m'+1}\in
\bigcup_{y\in\Y}B_{2r_n}(y)\right\}}\right)\left(\h_{r_n}(\Y',\X_{m'})\1_{\left\{X_{m'+1}\in
\bigcup_{y\in\Y'}B_{2r_n}(y)\right\}}\right)\equiv0.$$

For $m\neq m'$,
 $\Y\subseteq\X_{m}$ and $\Y\subseteq\X_{m'}$, 
let $\xi$ be the indicator that $\Y$
forms an empty $(k-1)$-simplex and $\eta$ the indicator that it is a 
component in $\X_m$.  Let $\xi'$ and $\eta'$ be the corresponding indicators
that $\Y'$ is an empty $(k-1)$-simplex and that it is a component in $\X_{m'}$.
Let $\zeta$ and $\zeta'$ be the indicators that $X_{m+1}$ is connected to
$\Y$ and that $X_{m'+1}$ is connected to $\Y'$, respectively.  Then what
is needed is 
$$\E[\xi\eta\zeta\xi'\eta'\zeta'].$$
Note that for the product to be non-zero, it must be that 
$(\Y \cup\{X_{m+1}\})\cap\Y'=\emptyset.$
Now, 
$$\P\left[\zeta\zeta'=1\big|\xi\eta\xi'\eta'=1\right]\le\frac{\|f\|^2_\infty
\theta^2_d(4r_n)^{2d}}{\vol_f(\cap_{y\in\Y'}B_{2r_n}(y)^c)}\le
\frac{\|f\|^2_\infty
\theta^2_d(4r_n)^{2d}}{1-\|f\|_\infty\theta_d(4r_n)^d},$$
since if $\xi\eta\xi'\eta'=1$, then $\Y$ and $\Y'$ make up empty 
$(k-1)$-simplices; and morover, while nothing at all is known about $X_{m'+1}$,
it is known that $X_{m+1}$ is not connected to $\Y'$.  
Trivially, 
$\P\big[\eta\eta'=1\big|\xi\xi'=1\big]\le 1$,
and $\P[\xi\xi'=1]=\P[\xi=1]\P[\xi'=1]\simeq r_n^{2d(k-1)}\mu^2,$
since $\Y\cap\Y'=\emptyset.$
Thus
\begin{equation*}\begin{split}
\E&\left[\sum_{\Y\subseteq\X_m}\sum_{\substack{\Y\subseteq\X_{m'}\\\Y'\neq\Y}}
\h_{r_n}(\Y,\X_m)\1_{\left\{X_{m+1}\in\bigcup_{y\in\Y}B_{2r_n}(y)\right\}}
\h_{r_n}(\Y,\X_{m'})\1_{\left\{X_{m'+1}\in\bigcup_{y\in\Y'}B_{2r_n}(y)\right\}}\right]
\\&\qquad\lesssim\frac{c_{d,f}(nr_n^d)^{2k}\mu^2}{(k!)^2}.
\end{split}\end{equation*}
It now follows that $\E[D_{m,n}D_{m',n}]\lesssim c_{d,f,k}(nr_n^d)^k$ for 
all $m,m'\in[n-n^\gamma,n+n^\gamma]$ with $m\neq m'$, and 
so
$$\lim_{n\to\infty}\sup_{n-n^\gamma\le m<m'\le n+n^\gamma}(nr_n^d)^{1-k}\E[D_{m,n}D_{m',n}]
=0.$$  If $m=m'$, then $\E[D_{m,n}^2]\lesssim c_{d,f}(nr_n^d)^{k-1},$ and so
$$\lim_{n\to\infty}\sup_{n-n^\gamma\le m\le n+n^\gamma}\frac{1}{\sqrt{n}}
(nr_n^d)^{1-k}\E[D_{m,n}^2]=0.$$
Thus the increment conditions of the theorem are satisfied with $\alpha=0$.

Finally, observe that 
$$H_n(\X_m)\le\frac{\sqrt{n}m}{n^kr_n^{d(k-1)}k}\le\frac{(\sqrt{n}+m)^2}{n^kr_n^{d(k-1)}k};$$
since $n^kr_n^{d(k-1)}$ is assumed to go to infinity as $n\to\infty$, 
the polynomial boundedness condition of Theorem \ref{de-Poisson}
is satisfied
and the  central limit theorem for 
$\s_{n,k}$ is proved.

\end{proof}

\bigskip

As was previously noted, that the same central limit theorem holds for 
upper and lower bounds for $\beta_k$ given in \eqref{bounds}
immediately yields part \ref{CC_clt_normal}
of Theorem \ref{CC_clt}.

\begin{thm}\label{betti-normal}
$$\frac{\beta_{k-1}-\E[\s_{n,k}]}{\sqrt{\E[\s_{n,k}]}}\Longrightarrow
\mathfrak{N}(0,1).$$
\end{thm}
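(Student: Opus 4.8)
The plan is to carry out the same squeezing argument used for the Erd\H{o}s--R\'enyi clique complex in the proof of Theorem~\ref{clt_er}, now applied to the sandwich \eqref{bounds}, namely $\s_{n,k}\le\beta_{k-1}\le S_{n,k}+Y_{n,k}+Z_{n,k}$. By Theorem~\ref{upper-normal}, the upper bound, after centering at $\E[\s_{n,k}]$ and rescaling by $\sqrt{\E[\s_{n,k}]}$, converges in distribution to $\mathcal{N}(0,1)$. The central limit theorem for the lower bound $\s_{n,k}$ established just above is stated with the normalization $\sqrt{\var(\s_{n,k})}$ rather than $\sqrt{\E[\s_{n,k}]}$, so the one preparatory step is to record that $\var(\s_{n,k})\simeq\E[\s_{n,k}]$. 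This follows by combining Lemmas~\ref{exp-order} and~\ref{CC_Poisson_means} (which give $\var(\s_{n,k}^P)\simeq\E[\s_{n,k}^P]\simeq\E[\s_{n,k}]\simeq\tfrac{\mu}{k!}\,n^kr_n^{d(k-1)}$) with the application of the de-Poissonization Theorem~\ref{de-Poisson} already carried out above with $\alpha=0$: there $\tfrac1n\var(H_n(\X_n))\to\sigma^2-\alpha^2=\tfrac{\mu}{k!}$, and since $\var(H_n(\X_n))=(nr_n^d)^{-(k-1)}\var(\s_{n,k})$, this yields $\var(\s_{n,k})\simeq\tfrac{\mu}{k!}\,n^kr_n^{d(k-1)}\simeq\E[\s_{n,k}]$.

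Granting this, Slutsky's theorem upgrades the lower-bound limit to
\[
\frac{\s_{n,k}-\E[\s_{n,k}]}{\sqrt{\E[\s_{n,k}]}}
=\sqrt{\frac{\var(\s_{n,k})}{\E[\s_{n,k}]}}\cdot\frac{\s_{n,k}-\E[\s_{n,k}]}{\sqrt{\var(\s_{n,k})}}\Longrightarrow\mathcal{N}(0,1),
\]
since the first factor tends to $1$. Now for any $t\in\R$ the inequalities \eqref{bounds} give
\[
\P\!\left[\frac{S_{n,k}+Y_{n,k}+Z_{n,k}-\E[\s_{n,k}]}{\sqrt{\E[\s_{n,k}]}}\le t\right]
\le\P\!\left[\frac{\beta_{k-1}-\E[\s_{n,k}]}{\sqrt{\E[\s_{n,k}]}}\le t\right]
\le\P\!\left[\frac{\s_{n,k}-\E[\s_{n,k}]}{\sqrt{\E[\s_{n,k}]}}\le t\right],
\]
and both the left- and right-hand sides converge to $\Phi(t)$ as $n\to\infty$ --- the left by Theorem~\ref{upper-normal} and the right by the displayed consequence of Slutsky's theorem, using in each case that $\Phi$ is continuous so that weak convergence gives pointwise convergence of distribution functions. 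Hence the middle probability also converges to $\Phi(t)$ for every $t$, which is the assertion. (If one prefers to avoid Slutsky, the same conclusion follows from the $\epsilon$-splitting of the right-hand side used in \eqref{fiddling}, with the Chebyshev error term there controlled by $\big(\sqrt{\var(\s_{n,k})}/\sqrt{\E[\s_{n,k}]}-1\big)^2\to0$.)

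All the genuine content has already been assembled --- the Poisson approximation for $S_{n,k}+Y_{n,k}+Z_{n,k}$ (hence its asymptotic normality in this regime) and the de-Poissonized central limit theorem for $\s_{n,k}$ --- so no real obstacle remains beyond the bookkeeping above; the only point needing a short extra argument is the relation $\var(\s_{n,k})\simeq\E[\s_{n,k}]$, which reconciles the two normalizations and is immediate from the variance computation of Lemma~\ref{CC_Poisson_means} together with the de-Poissonization already performed. Note that it is essential that the statement is centered at $\E[\s_{n,k}]$ rather than $\E[\beta_{k-1}]$: in part of the regime $nr_n^d\to0$, $n^kr_n^{d(k-1)}\to\infty$ the gap $\E[\beta_{k-1}]-\E[\s_{n,k}]$ need not be small compared with $\sqrt{\E[\s_{n,k}]}$, so no recentering is attempted.
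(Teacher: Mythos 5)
Your proof is correct and matches the approach the paper intends: the paper states Theorem~\ref{betti-normal} without further argument, appealing to the fact that the upper and lower bounds in \eqref{bounds} satisfy the same CLT and pointing back to the squeezing argument carried out in detail for the Erd\H{o}s--R\'enyi case. You make that argument explicit. The one piece of genuine bookkeeping you supply — reconciling the normalization $\sqrt{\var(\s_{n,k})}$ in the de-Poissonized CLT with the normalization $\sqrt{\E[\s_{n,k}]}$ in Theorem~\ref{upper-normal} via $\var(\s_{n,k})\simeq\E[\s_{n,k}]$ — is needed and correctly extracted from Lemma~\ref{exp-order} together with the conclusion of Theorem~\ref{de-Poisson} with $\sigma^2=\mu/k!$ and $\alpha=0$. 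Your closing remark about centering at $\E[\s_{n,k}]$ rather than $\E[\beta_{k-1}]$ is also well taken: the gap $\E[S_{n,k}+Y_{n,k}+Z_{n,k}]-\E[\s_{n,k}]$ is only known to be $O(n^{k+1}r_n^{dk})$, and $(nr_n^d)\sqrt{n^k r_n^{d(k-1)}}$ need not vanish in the stated regime, so one cannot freely recenter.
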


\section{Vietoris-Rips complexes}

Vietoris-Rips complexes were introduced by Leopold Vietoris in the
context of algebraic topology, and independently by Eliyahu Rips in
the context of geometric group theory.  These complexes continue to be
a useful construction in both fields, and are also useful in
computational topology -- although they do not carry the same homotopy
information that the \v{C}ech complex does, the fact that they are
determined by their underlying graph makes them much smaller in memory
and more amenable to certain kinds of calculation. 

Let $f: \mathbb{R}^d \to \mathbb{R}^{\ge 0}$ be a bounded measurable
density function and  et $\mathcal{X}_n$ denote a
set of $n$ points drawn independently from this distribution.  For any
$r>0$ define a (random geometric) graph $G(n,r)$ 
on $\mathcal{X}_n$ by inserting an edge $\{x,y\}$
whenever $d(x,y) < 2r$.  Usually 
$r=r(n)$ and we consider the limit
as $n$ tends to infinity.

The {\it random Vietoris-Rips complex} $VR (n,r)$ is the clique
complex of this random geometric graph; that is, the maximal 
simplicial complex with 1-skeleton $G(n,r)$. 
To see the contrast with $X(n,p)$, Figure \ref{VR-fig} 
has a picture of the Betti numbers of a random Rips
complex $VR(n,r)$ on $100$ uniform points in a $6$-dimensional cube,
with $n=100$ and $0 \le r \le 1$; compare with Figure \ref{ER-fig}.

\begin{figure}\label{VR-fig}
\begin{centering}
\includegraphics{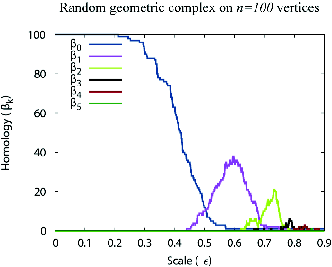}
\end{centering}
\caption{The Betti numbers of $VR(n,r)$ plotted vertically against $r$
  horizontally; $n=100$. \emph{Computation and graphic courtesy of
    Afra Zomorodian.}}
\label{fig:geom}
\end{figure}

In the sparse range of parameter, $r = o(n^{-1/d})$, a formula for the
asymptotic expectation of $\beta_k$ was given in \cite{geometric}.

\begin{theorem} \label{exp_rips} 
For $d \ge 2$, $k \ge 1$, $\epsilon>0$, and $r _n = O(n^{-1/d -
  \epsilon})$, the expectation of the $k$th Betti number $\E[\beta_k]$
of the random Vietoris-Rips complex $VR(X_n;r_n)$ satisfies $$
\frac{\E[\beta_k]}{n^{2k+2} r_n^{d(2k+1)}} \to C_k,$$ as $n \to
\infty$, where $C_k$ is a constant that depends only on $k$ and the
underlying density function $f$.
\end{theorem}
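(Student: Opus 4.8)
The plan is to exploit that in the sparse regime $r_n = O(n^{-1/d-\epsilon})$ the random geometric graph $G(n,r_n)$, and hence $VR(n,r_n)$, breaks into small connected components, so that $\beta_k(VR(n,r_n)) = \sum_C \beta_k(C)$, the sum running over the connected components $C$ of the complex (homology of a disjoint union splits). The Morse inequalities \eqref{morse} are useless here: they would only give $\beta_k \lesssim f_k \simeq n^{k+1}r_n^{dk}$, overshooting the truth by a factor $(nr_n^d)^{k+1}$. So everything is controlled through the component decomposition together with one purely topological input about flag complexes on few vertices: \emph{(i)} a flag complex on at most $2k+1$ vertices has $\beta_k = 0$; \emph{(ii)} a flag complex on exactly $2k+2$ vertices has $\beta_k \le 1$, with equality if and only if it is the boundary of the $(k+1)$-dimensional cross-polytope, equivalently the clique complex of the complement of a perfect matching on $2k+2$ vertices, a flag triangulation of $S^k$; and \emph{(iii)} any flag complex on $j$ vertices has $\beta_k \le f_k \le \binom{j}{k+1}$. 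Granting these, let $M_n$ be the number of connected components of $VR(n,r_n)$ that are ``empty cross-polytopes'': vertex sets of size $2k+2$ whose induced complex is the octahedral $k$-sphere and which have no neighbours among the remaining points. Then
$$\beta_k(VR(n,r_n)) = M_n + \sum_{C:\,|C|\ge 2k+3}\beta_k(C),$$
and the theorem reduces to showing $\E[M_n] \simeq C_k\, n^{2k+2}r_n^{d(2k+1)}$ and that the second term has expectation of strictly smaller order.

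The leading term $\E[M_n]$ is computed exactly as the expectations in Lemmas~\ref{exp-order} and~\ref{exp-upper}. There are $\binom{n}{2k+2}\simeq n^{2k+2}/(2k+2)!$ candidate vertex sets, and for each the probability that the $2k+2$ points both realize a cross-polytope pattern at scale $r_n$ and are isolated factors, after the substitution $y_i = r_n^{-1}(x_i - x_1)$ for $2\le i\le 2k+2$, as $r_n^{d(2k+1)}$ times $\int_{\R^d} f(x)^{2k+2}\,dx$ times a finite positive geometric constant (the Lebesgue measure of the set of cross-polytope patterns at threshold $1$), times an isolation probability bounded below by $(1-\|f\|_\infty\theta_d(cr_n)^d)^{\,n-2k-2}\to 1$ since $nr_n^d\to 0$. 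A dominated-convergence argument identical to the one used for Lemma~\ref{exp-order} (splitting off the $f(x_1)^{2k+2}$ factor and controlling the remainder at Lebesgue points of $f$) then gives $\E[M_n]\simeq C_k\, n^{2k+2}r_n^{d(2k+1)}$, with $C_k$ depending only on $k$ and $f$.

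For the error term, apply \emph{(iii)}: $\E\big[\sum_{|C|\ge 2k+3}\beta_k(C)\big]\le \sum_{j\ge 2k+3}\binom{j}{k+1}\E[N_j]$, where $N_j$ is the number of connected $j$-point subsets of $\{X_1,\dots,X_n\}$. Bounding $N_j$ by (number of labelled trees on $j$ vertices) times (probability a fixed tree is present) gives $\E[N_j] \lesssim n^j j^{j-2}\big(\|f\|_\infty\theta_d(4r_n)^d\big)^{j-1} = j^{j-2}\, n\,(c\,nr_n^d)^{j-1}$. Since $nr_n^d\to 0$, the series $\sum_{j\ge 2k+3}\binom{j}{k+1}j^{j-2}\,n\,(c\,nr_n^d)^{j-1}$ converges for all large $n$ and is dominated by its $j=2k+3$ term, which is $\lesssim n^{2k+3}r_n^{d(2k+2)} = (nr_n^d)\cdot n^{2k+2}r_n^{d(2k+1)} = o\big(n^{2k+2}r_n^{d(2k+1)}\big)$. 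Combining with the previous paragraph yields $\E[\beta_k] \simeq C_k\, n^{2k+2}r_n^{d(2k+1)}$.

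The main obstacle is the topological lemma \emph{(i)}--\emph{(ii)}. For \emph{(i)} I would induct on the number of vertices: a flag complex with no missing edge is a simplex, hence contractible; given a missing edge $\{u,v\}$, decompose $C = \overline{\mathrm{st}}(u)\cup(C\setminus u)$ with intersection $\mathrm{lk}(u)$, observe $\overline{\mathrm{st}}(u)$ is a cone (so acyclic), and read off from Mayer--Vietoris that $\widetilde{H}_k(C)$ is an extension of a subgroup of $\widetilde{H}_{k-1}(\mathrm{lk}(u))$ by a quotient of $\widetilde{H}_k(C\setminus u)$; since $u$ is not adjacent to $v$, the complex $C\setminus u$ is flag on at most $2k$ vertices and $\mathrm{lk}(u)$ is flag on at most $2k-1$ vertices, so both groups vanish by the inductive hypothesis (on vertices, and on $k$). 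The characterization in \emph{(ii)} needs a little more bookkeeping in the same sequence to force $C\setminus u$ and $\mathrm{lk}(u)$ into their extremal shapes. Everything else -- the geometric integral, the isolation estimate, and the tree bound -- is routine given the machinery already developed for the \v{C}ech case.
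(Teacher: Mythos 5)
The paper does not prove Theorem~\ref{exp_rips} at all; it simply cites it from~\cite{geometric}. So your proposal is filling in an argument the paper omits, and the right comparison is with the bounds the paper \emph{does} develop in the Vietoris--Rips section for the limit theorems, namely \eqref{octo-morse} and \eqref{VR_morse}. Your decomposition $\beta_k = M_n + \sum_{|C|\ge 2k+3}\beta_k(C)$ is exactly the statement that the lower bound $\tilde o_k\le\beta_k$ of \eqref{octo-morse} is in fact an equality for the contribution of small components (your $M_n$ is the paper's $\tilde o_k$), and your topological inputs (i)--(ii) are the content of Lemma~\ref{octa}, which the paper cites from~\cite{clique} rather than reproving. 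Your Mayer--Vietoris induction for (i) is the standard proof and works; the ``bookkeeping'' you defer for (ii) is genuinely the harder part (you must show that extra edges beyond the cross-polytope matching destroy $H_k$, which happens because they create $(k+1)$-simplices that kill the fundamental class), and since the paper already has this as a black box, citing Lemma~\ref{octa} would be cleaner than reproving it. On the error term, you bound the contribution of big components by a series over component sizes $j\ge 2k+3$ using a spanning-tree count, which works, but as written $\E[N_j]\lesssim n^j j^{j-2}(\cdot)^{j-1}$ suppresses the crucial $1/j!$ from $\binom{n}{j}$; without it, the implied constant would depend on $j$, and the factor $j^{j-2}$ outpaces $(c\,nr_n^d)^{j-1}$ for $j$ up to order $1/(nr_n^d)$, so domination by the $j=2k+3$ term is not immediate. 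Keeping $\binom{n}{j}\le n^j/j!$ repairs this: the $1/j!$ kills $j^{j-2}$ by Stirling and the series is then honestly geometric in $e\,c\,nr_n^d\to 0$. The paper's route via \eqref{VR_morse} -- bounding $f_k^{\ge 2k+3}$ by a \emph{finite} family of induced-subgraph counts on exactly $2k+3$ vertices using the extension algorithm -- sidesteps the infinite sum entirely and is tidier, though both give the same $O(nr_n^d)$ relative error. The leading-term calculation for $\E[M_n]$ is correct and matches the method of Lemma~\ref{exp-order}.
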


In the same regime we prove limit theorems for 
$\beta_k$.

\begin{theorem} \label{clt_rips} 
With the same hypothesis as in Theorem \ref{exp_rips}, 
\begin{enumerate}

\item if $n^{2k+2} r_n^{d(2k+1)}\to0$ as $n\to\infty$, then 
$$\beta_k(VR(X_n;r_n))\to0\qquad\qquad a.a.s.;$$

\item if $n^{2k+2} r_n^{d(2k+1)}\to\alpha\in(0,\infty)$ as $n\to\infty$,
then 
$$d_{TV}(\beta_k(VR(X_n;r_n)),Y)\le c\alpha nr_n^d,$$
where $Y$ is a Poisson random variable with $\E[Y]=\E[\beta_k]$
and $c$ is a constant depending only on $d$, $k$, and $f$;

\item if $n^{2k+2} r_n^{d(2k+1)}\to\infty$, then 
$$ \frac{ \beta_k - \E[\beta_k] }{
  \sqrt{\var [ \beta_k]}} \to \mathcal{N}(0,1).$$

\end{enumerate}

\end{theorem}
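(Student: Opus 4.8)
The plan is to run, almost verbatim, the three-part sandwich argument used for the \v{C}ech complex in the proof of Theorem~\ref{CC_clt}, with ``isolated empty $(k-1)$-simplex'' replaced by the minimal configuration that forces $k$-th homology in a flag complex. Since $VR(X_n;r_n)$ is the clique complex of the random geometric graph $G(n,r_n)$, its $k$-th homology splits as a direct sum over connected components, and a component contributes to $\beta_k$ only if it is a flag complex with $\tilde H_k\ne 0$; the smallest such complex is the octahedral $k$-sphere $O_k$ --- the boundary of the $(k+1)$-dimensional cross-polytope, on $2k+2$ vertices, whose $1$-skeleton is the complete $(k+1)$-partite graph $K_{2,\dots,2}$ and which has $\beta_k(O_k)=1$. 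Let $T_{n,k}$ be the number of connected components of $G(n,r_n)$ that are flag complexes on $2k+2$ vertices with $\tilde H_k\ne0$ (by a standard combinatorial fact these are precisely the induced copies of $O_k$), and let $U_{n,k}$ be the number of components on at least $2k+3$ vertices with nontrivial $k$-th homology, each counted with multiplicity equal to its own $\beta_k$. Then
\begin{equation*}
T_{n,k}\ \le\ \beta_k(VR(X_n;r_n))\ \le\ T_{n,k}+U_{n,k},
\end{equation*}
the exact analogue of \eqref{bounds}; isolating and establishing this topological dichotomy is, I expect, the main obstacle, and I return to it at the end.

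First I would record the orders of the two bounds by first-moment computations of the type in Lemmas~\ref{exp-order} and~\ref{exp-upper}. The probability that $2k+2$ i.i.d.\ points form an isolated induced $O_k$ is $\Theta(r_n^{d(2k+1)})$: connectivity confines them to a blob of radius $O(r_n)$, the prescribed edges and non-edges then have probability $\Theta(1)$, and isolation has probability $\Theta(1)$ because $nr_n^d\to0$; hence $\E[T_{n,k}]\simeq c\,n^{2k+2}r_n^{d(2k+1)}$ for an explicit positive constant $c=c(d,k,f)$. Likewise $\E[U_{n,k}]=O\big(n^{2k+3}r_n^{d(2k+2)}\big)=O\big((nr_n^d)\,n^{2k+2}r_n^{d(2k+1)}\big)$, the sum over component sizes $m\ge2k+3$ being dominated by its $m=2k+3$ term since $nr_n^d\to0$. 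Comparing with Theorem~\ref{exp_rips} gives $\E[T_{n,k}]\simeq\E[\beta_k]$ and $\E[U_{n,k}]=o(\E[\beta_k])$, and part (i) is then immediate: if $n^{2k+2}r_n^{d(2k+1)}\to0$ then $\P[\beta_k\ge1]\le\E[\beta_k]\le\E[T_{n,k}+U_{n,k}]\to0$. For part (ii) I would bound $d_{TV}(T_{n,k},Poi(\E[T_{n,k}]))$ and $d_{TV}(T_{n,k}+U_{n,k},Poi(\E[T_{n,k}]))$ by $O(\alpha\,nr_n^d)$ with the Arratia--Goldstein--Gordon inequality (Theorem~\ref{Poi-approx}), running the estimates of the proofs of Theorems~\ref{Poisson-bd} and~\ref{upper-Poisson}: write each count as a sum of Bernoulli indicators over tuples of point indices, take two tuples adjacent when their index sets meet, and use that a single indicator has mean $O(r_n^{d(2k+1)})$ while the joint mean of two overlapping ones is smaller by a positive power of $r_n^d$. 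Sandwiching the distribution function of $\beta_k$ between those of $T_{n,k}$ and $T_{n,k}+U_{n,k}$ exactly as in the proof of part~\ref{CC_clt_poisson} of Theorem~\ref{CC_clt}, and absorbing $U_{n,k}$ and the gap $\E[\beta_k]-\E[T_{n,k}]$ into the error, yields $d_{TV}(\beta_k,Poi(\E[\beta_k]))\le c\alpha\,nr_n^d$.

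For part (iii) I would again treat the two bounds separately. The upper bound $T_{n,k}+U_{n,k}$ is approximately Poisson throughout the sparse regime by the part-(ii) estimates, hence asymptotically normal once its mean diverges (the analogue of Theorem~\ref{upper-normal}). For the lower bound $T_{n,k}$, which ceases to be approximately Poisson once $n^{2k+3}r_n^{d(2k+2)}$ stays bounded away from zero, I would Poissonize --- replace the $n$ i.i.d.\ points by a Poisson process of intensity $nf(\cdot)$ --- prove a central limit theorem for the Poissonized count $T^P_{n,k}$ via Penrose's dependency-graph normal approximation (Theorem~\ref{normal}) with the side-$r_n$ cube partition giving a bounded-degree dependency graph and third and fourth moments dominated by powers of a Poisson count, just as in the proof of Theorem~\ref{Poissonized_normal}, and then de-Poissonize via Theorem~\ref{de-Poisson} applied to the functional $\X\mapsto (nr_n^d)^{-(2k+1)/2}\sum_{\Y}\1[\Y\text{ an isolated induced }O_k\text{ in }\X]$, exactly as in the proof of Theorem~\ref{betti-normal}: the increment $D_{m,n}$ --- the change in the number of isolated induced octahedra upon inserting one point --- satisfies \eqref{means}--\eqref{vars} with $\alpha=0$, and polynomial boundedness holds because $n^{2k+2}r_n^{d(2k+1)}\to\infty$ forces $r_n^{-d}$ to grow only polynomially in $n$. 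A variance comparison parallel to part~\ref{var_equiv} of the claim in the proof of Theorem~\ref{clt_er} and to Lemma~\ref{CC_Poisson_means} then gives $\var(T_{n,k})\simeq\var(T_{n,k}+U_{n,k})\simeq\E[\beta_k]$, so the $\epsilon$-perturbation sandwich from the proof of Theorem~\ref{clt_er} transfers the central limit theorem to $\beta_k$.

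The step I expect to be genuinely delicate is the combinatorial/topological input rather than the probability: one must verify that $O_k$ is, up to isomorphism, the unique flag complex on $2k+2$ vertices with nontrivial $k$-th homology (so that $T_{n,k}$ is the correct lower bound), and --- more subtly --- that every contribution to $\beta_k$ not coming from an isolated $O_k$ forces a connected component on at least $2k+3$ vertices. This last point is exactly what makes $\E[U_{n,k}]$ smaller than $\E[\beta_k]$ by the factor $nr_n^d$, and it is the flag-complex analogue of the $Y_{n,k},Z_{n,k}$ discussion preceding \eqref{bounds} in the \v{C}ech case. Once these structural facts are established, the Stein's-method and (de-)Poissonization arguments run in parallel with the previous section, with only the exponents changing.
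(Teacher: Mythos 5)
Your overall architecture is right --- the sandwich between a lower bound given by isolated octahedral components and an upper bound absorbing the rest, followed by Poissonization/de-Poissonization for the lower bound and Poisson approximation for the upper bound --- and it matches the paper's strategy. The structural input (that $O_k$ is the unique flag complex on $2k+2$ vertices supporting nontrivial $H_k$, so that any other contribution to $\beta_k$ lives on a component with $\ge 2k+3$ vertices) is precisely the paper's Lemma~\ref{octa}, which you correctly flag as the load-bearing combinatorial fact.

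The genuine gap is in your treatment of the upper error term $U_{n,k}$. As you define it --- the sum of $\beta_k(C)$ over components $C$ on $\ge 2k+3$ vertices --- your ``sandwich'' $T_{n,k}\le\beta_k\le T_{n,k}+U_{n,k}$ is actually an equality on the right (since homology of a disjoint union is the direct sum), so it carries no information, and more importantly $U_{n,k}$ is a homology count, not a sum of tuple-indexed Bernoullis. Your proposal to ``write each count as a sum of Bernoulli indicators over tuples of point indices'' and feed it to Arratia--Goldstein--Gordon fails here: whether a given $(k+1)$-clique lies in a component of size $\ge 2k+3$, and what $\beta_k$ of that component is, are not local events determined by a bounded tuple. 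This is exactly the obstacle the paper solves. It first bounds $\beta_k(C)\le f_k(C)$ by the dimension bound, giving $\beta_k\le\tilde o_k+f_k^{\ge 2k+3}$, and then --- the key step --- replaces $f_k^{\ge 2k+3}$ by a linear combination $g_{2k+3}$ of \emph{induced subgraph counts on exactly $2k+3$ vertices}, via an explicit greedy extension of each $(k+1)$-clique in a large component to a connected graph on $2k+3$ vertices with $\binom{k+1}{2}+k+2$ edges. That reduction makes both sides of the sandwich bona fide (linear combinations of) subgraph counts, to which Penrose's Theorem~\ref{subgraph-clt} applies directly; without it, neither the Poisson approximation in part (ii) (beyond the cheap $\P[U_{n,k}\ge 1]\le\E[U_{n,k}]$ argument) nor the variance comparison needed in part (iii) goes through. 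You also don't address the wrinkle at $k=1$, where the extension of a $2$-clique to a tree on $5$ vertices is not invertible, so the subgraph-count bound must be multiplied by $4$ (the number of edges of the tree) to make it a genuine overestimate of $f_1^{\ge 5}$.

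In short: the probabilistic machinery you invoke (AGG, Penrose's dependency-graph CLT, de-Poissonization) is the right toolkit, but you have left out the combinatorial device --- converting the non-local error term into a finite linear combination of induced subgraph counts on a fixed number of vertices --- which is precisely what makes that toolkit applicable to the upper bound.
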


(The case $k=0$ is handled in detail by Penrose \cite{penrose}.)

The main idea of the proof of Theorem
\ref{clt_rips} is again to bound $\beta_k$
between two random variables which 
satisfy the same central limit theorem. The intuition behind the bounds is
that almost all of the homology of $VR(n,r)$ is contributed
from a single source: the octahedral components.This is essentially because they
are the smallest possible support of homology (smallest in the sense
of vertex support), in the same way that empty $(k-1)$-simplices
were the smallest possible support of homology in the previous section.

\begin{definition}
The $(k+1)$-dimensional {\it cross-polytope} is
defined to be the convex hull of the $2k+2$ points $\{ \pm e_i
\}$, where $e_1, e_2, \ldots, e_{k+1}$ are the standard basis vectors of
$\R^{k+1}$. The boundary of this polytope is a $k$-dimensional simplicial
complex, denoted $O_{k}$.
\end{definition}

Simplicial complexes which arise as clique complexes of graphs are
sometimes called {\it flag complexes}.  A useful fact in
combinatorial topology is the following; for a proof see
\cite{clique}.

\begin{lemma} \label{octa} 
If $\Delta$ is a flag complex, then any nontrivial element of $k$-dimensional
homology $H_k(\Delta)$ is supported on a subcomplex $S$ with at least
$2k+2$ vertices. Moreover, if $S$ has exactly $2k+2$ vertices, then
$S$ is isomorphic to $O_k$.
\end{lemma}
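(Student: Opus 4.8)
The plan is to derive Lemma~\ref{octa} from the following intrinsic statement about flag complexes: \emph{a flag complex on fewer than $2k+2$ vertices has vanishing reduced homology in degree $k$, and a flag complex on exactly $2k+2$ vertices with $\widetilde{H}_k\neq 0$ is isomorphic to $O_k$.} To pass from this to the lemma one uses a support argument, but with one point of care: a subcomplex of a flag complex need not be flag. So, given a nontrivial $\alpha\in H_k(\Delta)$, I would represent it by a cycle $z$ and let $S$ be the \emph{induced} subcomplex of $\Delta$ on the vertices occurring in $z$. Then $S$ is a flag complex (an induced subcomplex of a flag complex is the clique complex of the induced subgraph), $z$ is a cycle of $S$, and $[z]\neq 0$ in $H_k(\Delta)$ forces $[z]\neq 0$ in $H_k(S)$; hence $\widetilde{H}_k(S)\neq 0$ and the intrinsic statement, applied to $S$, gives that $S$ has at least $2k+2$ vertices, with $S\cong O_k$ in case of equality.

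The intrinsic statement I would prove by induction on $k$, with an inner induction on the number of vertices $n$. The engine is the Mayer--Vietoris sequence for the decomposition $K=\mathrm{st}(v)\cup K'$, where $\mathrm{st}(v)$ is the closed star of a vertex $v$, $K'=K_{V\setminus\{v\}}$ is the induced subcomplex on the other vertices, and $\mathrm{st}(v)\cap K'=\mathrm{lk}(v)$, the link of $v$. Since $\mathrm{st}(v)$ is a cone it is contractible, so exactness yields that $\widetilde{H}_k(K)\neq 0$ implies, for every vertex $v$, that $\widetilde{H}_k(K')\neq 0$ or $\widetilde{H}_{k-1}(\mathrm{lk}(v))\neq 0$. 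In the first case $K'$ is a flag complex on $n-1$ vertices with nontrivial $k$-th homology, so $n-1\ge 2k+2$ by the inner hypothesis. In the second case $\mathrm{lk}(v)$ is the clique complex of the neighbourhood graph of $v$, hence a flag complex, and by the outer hypothesis ($k-1$) it has at least $2k$ vertices, so $v$ has degree at least $2k$. If the first alternative never occurs, then every vertex has degree $\ge 2k$; since a vertex has only $n-1$ potential neighbours this gives $n\ge 2k+1$, and $n=2k+1$ would make the $1$-skeleton complete, so $K$ would be a simplex — contractible, a contradiction. In all cases $n\ge 2k+2$. (The inner induction is grounded on complexes with at most $k+1$ vertices, which carry no $k$-cycle at all; the case $k=0$ is immediate, the only two-vertex flag complex with nontrivial reduced $H_0$ being the two isolated points $O_0$.)

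For the equality case $n=2k+2$, the first Mayer--Vietoris alternative is now excluded (it would give a flag complex on $2k+1$ vertices with $\widetilde{H}_k\neq 0$), so $\widetilde{H}_{k-1}(\mathrm{lk}(v))\neq 0$ for every $v$, and by the $(k-1)$ case $\mathrm{lk}(v)$ has at least $2k$ vertices. It cannot have $2k+1$ vertices — that would make $v$ a cone point and $K$ contractible — so every vertex is adjacent to exactly $2k$ of the remaining $2k+1$ vertices; equivalently the non-adjacency relation on the $2k+2$ vertices is a perfect matching. Thus the $1$-skeleton of $K$ is the complete $(k+1)$-partite graph with all parts of size $2$, whose clique complex is exactly $O_k$ (send the two vertices of the $i$-th part to $\pm e_i$; the maximal cliques are the transversals, matching the facets of the cross-polytope). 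I expect the only real obstacle to be administrative rather than mathematical: setting up the nested inductions so that the Mayer--Vietoris step may simultaneously invoke the statement for $k-1$ and the statement for fewer vertices at the same $k$, and checking that isolated or cone vertices and the reduced-homology conventions cause no trouble at the boundary of the induction. Every individual step — contractibility of stars and simplices, flagness of links and induced subcomplexes, and the degree count — is routine.
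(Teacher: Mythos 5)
Your proof is correct. The paper does not actually prove Lemma~\ref{octa}; it simply cites \cite{clique} for it, so there is no in-text argument to compare against. Your argument is a clean, self-contained proof along the lines one would expect: reduce to induced (hence flag) subcomplexes, then run a double induction on $k$ and on the vertex count via Mayer--Vietoris for the closed-star/deletion cover, using contractibility of the star and the fact that links in a flag complex are again flag (clique complexes of neighbourhood graphs). The degree count that forces $n\geq 2k+2$, and the equality analysis identifying the non-adjacency relation as a perfect matching and hence the $1$-skeleton as the cocktail-party graph $K_{(k+1)\times 2}$, are both correct; since the subcomplex is flag, it is the clique complex of that graph, which is exactly $O_k$. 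Two small points worth making explicit in a write-up: (i) you should say that for $k\geq 1$ (the case the paper uses) $H_k=\widetilde H_k$, since the statement is false for unreduced $H_0$; (ii) in the support-reduction step you pass to the induced subcomplex $S'$ on $V(S)$, and in the equality case you conclude $S'\cong O_k$ — to get $S\cong O_k$ for the original $S$ you should note that $\widetilde H_k(O_k)\cong\mathbb Z$ is generated by a cycle supported on \emph{every} top face, so a proper subcomplex of $O_k$ on all $2k+2$ vertices cannot carry a nontrivial $k$-cycle, forcing $S=S'$.
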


\begin{definition} 
Let $o_k(\Delta)$ (or $o_k$ if context is clear) denote the number of
induced subgraphs of $\Delta$ 
combinatorially isomorphic to the
$1$-skeleton of the cross-polytope $O_k$, and let $\tilde{o}_k(
\Delta)$ denote the 
number of components of $\Delta$ combinatorially isomorphic to the
$1$-skeleton of the cross-polytope $O_k$.
\end{definition}

\begin{definition} 
Let $f_k^{= i}(\Delta)$ denote the number of $k$-dimensional faces on
connected components containing with exactly $i$ vertices.
Similarly,
let $f_k^{\ge i}(\Delta)$ denote the number of $k$-dimensional faces
on connected components containing at least $i$ vertices.
\end{definition}

In \cite{penrose}, Penrose proved the following 
limit theorems for subgraph counts of random geometric graphs. 

\begin{thm}[Penrose]\label{subgraph-clt}
Let $\Gamma_1,\ldots,\Gamma_m$ be graphs on $v\ge2$ vertices, such that
$\P[G(v,r)\cong \Gamma_j]>0$ for each $j$.  Let $G_n(\Gamma)$ denote
the number of induced subgraphs of $G(n,r_n)$ isomorphic to $\Gamma$.
Then with $r_n$ as in the 
statement of Theorem \ref{clt_rips}, 
\begin{enumerate}
\item \label{subgraph-means}
There is a constant $\mu_j$ depending only on $\Gamma_j$ and $v$
such that $$\lim_{n\to\infty}r_n^{-d(v-1)}n^{-v}\E[G_n(\Gamma_j)]=\mu_j.$$

\item Let $Z_1,\ldots,Z_m$ be indpendent Poisson random variables with 
$\E Z_j=\E[G_n(\Gamma_j)]$.  There is a constant $c$ depending only on $m$
such that
$$d_{TV}\big[(G_n(\Gamma_1),\ldots,G_n(\Gamma_m)),(Z_1,\ldots,Z_m)\big]\le
cn^{v+1}r_n^{dv}.$$

\item Suppose that $n^vr_n^{d(v-1)}\to\infty$ as $n\to\infty$.  Let 
$\tau=\sqrt{n^vr_n^{d(v-1)}}$.  Then the joint distribution of the random
variables $\{G_n(\Gamma_j)\}_{j=1}^m$ converges to a centered Gaussian 
distribution with covariance matrix $\Sigma=diag(\mu_1,\ldots,\mu_m)$,
for $\mu_j$ as in part \ref{subgraph-means}

\end{enumerate}

\end{thm}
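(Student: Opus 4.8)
\emph{Part (i), the means.} I would write $G_n(\Gamma_j)=\sum_{S}\chi_S$, where $S$ ranges over $v$-subsets of $\{1,\dots,n\}$ and $\chi_S$ is the indicator that the induced geometric graph on $\{X_i:i\in S\}$ at scale $r_n$ is isomorphic to $\Gamma_j$. Then $\E[G_n(\Gamma_j)]=\binom{n}{v}\P[G(v,r_n)\cong\Gamma_j]$, and $\P[G(v,r_n)\cong\Gamma_j]$ is a sum over the $v!/|\mathrm{Aut}(\Gamma_j)|$ labelings of an integral of the form $\int_{(\R^d)^v}g_{r_n}(x_1,\dots,x_v)\prod_i f(x_i)\,dx$, where $g_{r_n}$ is the indicator that the $x_i$ realize the edge set of a fixed labeled copy of $\Gamma_j$. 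Since realizing a connected $\Gamma_j$ forces $x_2,\dots,x_v\in B_{2vr_n}(x_1)$, the substitution $y_i=r_n^{-1}(x_i-x_1)$ pulls out a factor $r_n^{d(v-1)}$, and boundedness of $f$ together with a Lebesgue-point/dominated-convergence argument exactly as in the proof of Lemma~\ref{exp-order} lets one replace $\prod_{i\ge2}f(x_i)$ by $f(x_1)^{v-1}$ in the limit. This gives $r_n^{-d(v-1)}n^{-v}\E[G_n(\Gamma_j)]\to\mu_j$ with $\mu_j=\tfrac{1}{|\mathrm{Aut}(\Gamma_j)|}\big(\int f^v\big)\big(\int_{(\R^d)^{v-1}}g_1(0,y_2,\dots,y_v)\,dy\big)$, a constant depending only on $\Gamma_j$, $v$, and $f$; the joint statement is this for each $j$ separately.

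\emph{Part (ii), Poisson approximation.} I would apply Theorem~\ref{Poi-approx} to the Bernoulli family $\{\chi_S\}$ (taking a disjoint union of index sets over $j=1,\dots,m$ for the joint statement), with dependency relation $S\sim S'$ iff $S\cap S'\neq\emptyset$; this is a genuine dependency graph since disjoint index sets correspond to independent blocks of points. Exactly as in the proof of Theorem~\ref{Poisson-bd}: $p_S\lesssim r_n^{d(v-1)}$; if $|S\cap S'|=\ell\ge1$ then $p_{SS'}\lesssim r_n^{d(2v-\ell-1)}$; there are $O(n^{v-\ell})$ sets $S'$ meeting a given $S$ in $\ell$ points and $O(n^{v-1})$ neighbours altogether; so both sums in Theorem~\ref{Poi-approx} are $O(n^{v+1}r_n^{dv})$ (the $\ell=1$ and the $p_ip_j$ terms dominate). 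Since $\lambda=\Theta(n^vr_n^{d(v-1)})$, the $\min(3,\lambda^{-1})$ prefactor is harmless, and the several $\Gamma_j$ cost only a factor depending on $m$; this yields the stated bound $cn^{v+1}r_n^{dv}$.

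\emph{Part (iii), normal approximation.} I would de-Poissonize. First establish the multivariate Gaussian limit for the \emph{Poissonized} counts $G_n^P(\Gamma_j)$ (the $n$ points replaced by a Poisson process $\p_n$ of intensity $nf(\cdot)$): partition $\R^d$ into cubes of side $r_n$, write each recentered count as a sum over cubes of locally-dependent variables of bounded degree, bound their third and fourth absolute moments by $O((nr_n^d)^v)$ via a Poisson-moment estimate (as for the $\xi_i$ in the proof of Theorem~\ref{Poissonized_normal}), and feed a fixed linear combination $\sum_j a_j(G_n^P(\Gamma_j)-\E G_n^P(\Gamma_j))$ into Theorem~\ref{normal}, obtaining error $O((n^vr_n^{d(v-1)})^{-1/4})\to0$; the Cram\'er--Wold device then gives joint asymptotic normality. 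Using Theorems~\ref{one} and~\ref{product} one computes $\var(G_n^P(\Gamma_j))\simeq n^vr_n^{d(v-1)}\mu_j$ and $\cov(G_n^P(\Gamma_i),G_n^P(\Gamma_j))=o(n^vr_n^{d(v-1)})$ for $i\neq j$ (the leading term for a covariance needs two copies sharing a vertex, hence one fewer free point, hence strictly lower order in the sparse regime), so the limiting covariance is $\Sigma=\mathrm{diag}(\mu_1,\dots,\mu_m)$. Finally transfer to the i.i.d.\ model via Theorem~\ref{de-Poisson} applied to $H_n(\X)=\tau^{-1}\sum_{S\subseteq\X}(\text{indicator of an induced }\Gamma_j)$ (coordinatewise, or to a fixed linear combination): the increments count copies of $\Gamma_j$ through the newly added point, with mean $\Theta(\tau^{-1}n^{v-1}r_n^{d(v-1)})\to0$ in the sparse regime, and the increment-covariance and increment-variance conditions are verified just as in the de-Poissonization argument for $\s_{n,k}$ above; hence the de-Poissonization constant is $\alpha=0$ and the limiting variance is unchanged.

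\emph{Main obstacle.} The real work is part (iii): arranging the covariance and increment bookkeeping so that (a) distinct subgraph types decorrelate and each has variance $\simeq n^vr_n^{d(v-1)}\mu_j$ in the Poissonized model, and (b) all three de-Poissonization hypotheses hold with $\alpha=0$. Both hinge on the fact that ``shared-vertex'' configurations are of strictly smaller order than the diagonal ones when $r_n=O(n^{-1/d-\epsilon})$; parts (i) and (ii) are essentially verbatim adaptations of arguments already carried out above.
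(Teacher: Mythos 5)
This theorem is quoted in the paper as Penrose's (\cite{penrose}) and the paper offers no proof, so there is no ``paper proof'' to compare against; what you have done is reconstruct the standard argument from the machinery the paper develops for its own lemmas. Your reconstruction is, in outline, correct and is precisely the route taken in \cite{penrose}: in part (i) the change of variables plus a Lebesgue-point/dominated-convergence argument as in Lemma~\ref{exp-order}; in part (ii) the Arratia--Goldstein--Gordon bound of Theorem~\ref{Poi-approx} with the dependency relation $S\sim S'\iff S\cap S'\neq\emptyset$; in part (iii) the local-dependence CLT (Theorem~\ref{normal}) in the Poissonized model followed by de-Poissonization (Theorem~\ref{de-Poisson}), with Cram\'er--Wold to pass to the joint statement and the observation that distinct $\Gamma_i,\Gamma_j$ decorrelate because overlapping pairs are of strictly smaller order than the diagonal.

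One substantive slip: in part (ii) you assert that the $\ell=1$ overlap term and the $p_ip_j$ term dominate. They do not. The $\ell$-overlap contribution to $\sum_i\sum_{j\sim i}p_{ij}$ has order $n^{2v-\ell}r_n^{d(2v-\ell-1)}$, and the ratio of the $\ell+1$ term to the $\ell$ term is $(nr_n^d)^{-1}\to\infty$ in the sparse regime, so the sum is dominated by the \emph{maximal} overlap $\ell=v-1$, which gives exactly $n^{v+1}r_n^{dv}$; the $\ell=1$ and $p_ip_j$ contributions are of order $n^{2v-1}r_n^{2d(v-1)}=(nr_n^d)^{v-2}\cdot n^{v+1}r_n^{dv}$, hence strictly lower order for $v\ge3$ (equal order only when $v=2$). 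The final bound $cn^{v+1}r_n^{dv}$ that you state is nonetheless correct; you have simply attributed it to the wrong term. (Note also that this is the bound obtained by taking the factor $3$ in $\min(3,\lambda^{-1})$; taking $\lambda^{-1}$ instead, as the paper does in the analogous Theorem~\ref{Poisson-bd}, yields the sharper bound $c\,nr_n^d$.) The rest of your sketch — in particular the covariance and increment bookkeeping in part (iii) — is in order, and the increment mean $\tau^{-1}n^{v-1}r_n^{d(v-1)}=\sqrt{n^{v-2}r_n^{d(v-1)}}\to0$ as you say.
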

 
A dimension bound paired with Lemma \ref{octa} yields 
\begin{equation}\label{octo-morse}
\tilde{o}_k \le \beta_k \le \tilde{o}_k + f_k^{\ge 2k+3},
\end{equation}
in analogy to the Morse
inequalities used in the first section.

One could work with $f_k^{\ge 2k+3}$ directly, but it turns out to be sufficient 
 to overestimate $f_k^{\ge 2k+3}$ as follows.  For each 
$k$-dimensional face, consider the underlying 
$(k+1)$-clique; if it is in a component
with at least $2k+3$ vertices, extend the clique to a connected
subgraph with exactly $2k+3$ vertices and ${k+1 \choose 2} + k+2$
edges, by the following algorithm.

\begin{enumerate}
\item Set $G$ to be the $1$-skeleton of
  the complex, and initialize $H$ to be the $(k+1)$-clique.
\item Find some edge connecting $V(H)$ to $V(G) - V(H)$.  Add this
  edge (and its endpoint) to $H$.  This is always possible since by
  assumption $H$ is contained in a component with at least $2k+3$
  vertices.
\item Repeat step $2$ until $H$ has exactly $2k+3$ vertices.
\end{enumerate}

For example, let $k=2$; then  
 $$\tilde{o}_2 \le \beta_2 \le \tilde{o}_2 + f_2^{\ge
  7}.$$  Up to isomorphism, the seventeen graphs that arise when
extending a $2$-dimensional face (i.e.\ a $3$-clique) to a minimal
connected graph on $7$ vertices are exhibited in Figure
\ref{fig:betti2}.

In particular,$f_2^{\ge 7} \le \sum_{i=1}^{17} s_i,$ where
$s_i$ counts the number of subgraphs isomorphic to graph $i$ for some
indexing of the seventeen graphs in Figure \ref{fig:betti2}.

\begin{figure}
\begin{centering}
\includegraphics[width=5in]{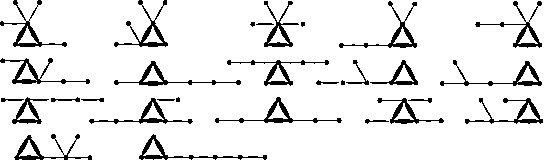}
\end{centering}
\caption{The case $k=2$: the seventeen isomorphism types of subgraphs
  which arise when extending a $3$-clique to a connected graph on $7$
  vertices with $7$ edges.  Each
  subgraph isomorphic to one of these can contribute at most $1$
  to the sum bounding the error term $f_2^{\ge 7}$.}
\label{fig:betti2}
\end{figure}

In general, 
one can express the number of graphs on $2k+3$ vertices that can 
arise from the algorithm above as a function of $k$.  Moreover, 
as is noted in \cite{penrose}, 
the number of occurances of a given graph $\Gamma$ on $v$ vertices (that is, 
the subgraph count corresponding to $\Gamma$) can be written
as a linear combination of the induced subgraph counts for those
graphs on $v$ vertices which have $\Gamma$ as a subgraph.  That is,
\begin{equation}\label{VR_morse}\tilde{o}_k\le\beta_k\le o_k+g_{2k+3},
\end{equation}
where $g_{2k+3}$ is a linear combination of the induced subgraph counts
of graphs on $2k+3$ vertices, the number of which depends only on $k$,
and the trivial bound $\tilde{o}_k\le o_k$ has been used on the right-hand side.

The
induced subgraph counts appearing on the right-hand side of \eqref{VR_morse}
are among the components of a random vector whose 
joint distribution is identified in Theorem \ref{subgraph-clt} (for two different
values of $v$), and thus limiting distributions for
$o_k$ and $g_{2k+3}$ are known in those regimes.  
Moreover, it is easy to modify Penrose's proofs (just as in the previous 
section) to show that 
$$d_{TV}(o_k+g_{2k+3},Y)\le c\alpha nr_n^d,$$
where $Y$ is a Poisson random variable with $\E[Y]=\E[o_k+g_{2k+3}]$,
which in particular yields a central limit theorem if $n^{2k+2}r_n^{d(2k+1)}\to
\infty$ as $n\to\infty$.  

To obtain the limiting distribution for the lower bound of \eqref{VR_morse}
is also just as in the previous section; all the proofs go through
in exactly the same way, and will therefore not be repeated.

For $k=1$ there are several ways of extending a $2$-clique (i.e.\ an
edge) to a connected graph on $5$ vertices and $4$ edges.  In this
case the graph must be a tree, and 
it is no longer possible
to recover the clique from the connected graph.  
However, there
are only three isomorphism types of trees on five vertices, shown in
Figure \ref{fig:betti1}.  Counting these types of subgraphs 
may therefore result in an underestimate
for $f_1^{\ge 5}$  because some edges might
get extended to the same tree.  However, each tree has only four edges, and so one can obtain the bound
 $$f_1^{\ge 5} \le 4 ( t_1 + t_2 + t_3),$$ where
$t_1, t_2, t_3$ count the number of subgraphs isomorphic to the three
trees in Figure \ref{fig:betti1}.  The proof is then the same as in the 
case $k\ge 2$.

\begin{figure}
\begin{centering}
\includegraphics[width=3.5in]{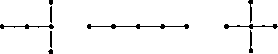}
\end{centering}
\caption{The case $k=1$: the three isomorphism types of trees on five vertices.  Each
  subgraph isomorphic to one of these can contribute at most $4$
  to the sum bounding the error term $f_1^{\ge 5}$.}
\label{fig:betti1}
\end{figure}

\section{Comments} \label{section:open}

We studied here three different kinds of random simplicial complex in order to work as generally as possible; however there are various ways in which we believe it may be possible to extend our results.\\

\paragraph 1 The random Vietoris-Rips and \v{C}ech complexes studied here are on Euclidean space, but this is mostly a matter of convenience.  It would seem that the same proofs work, mutatis mutandis, for arbitrary Riemannian manifolds.  This may be of interest in topological data analysis, as in earlier work of Niyogi, Smale, and Weinberger \cite{Smale}.

\medskip

\paragraph 2 It may be possible to extend the central limit theorems for the random Vietoris-Rips and \v{C}ech complexes into denser regimes, at least into the thermodynamic limit.  We expect, for example, that there exists some $c>0$ such that CLT's
 hold for all Betti numbers $\beta_k$ simultaneously, whenever $r \ge c n^{-1/d}$. 

\medskip

\paragraph 3 An easier  argument than those
 presented here should yield central limit theorems
  for Euler characteristic $\chi$ of geometric random complexes,  in the sparse range. Again it would be nice to know this
  this in denser regimes, and we would guess that it holds at  
  least partway into the thermodynamic limit.

\bigskip

\noindent {\bf Acknowledgements:} The authors met and began discussing
this project at the Workshop on Topological Complexity of Random Sets held
at the American Institute of Mathematics in August, 2009; many thanks to 
AIM and to the organizers of the workshop.  The authors also thank
Omer Bobrowski for pointing out a mistake in the original version
of the paper.

%


\bibliographystyle{plain}
\bibliography{empty}

\end{document}